\tikzset{->-/.style={decoration={
  markings,
  mark=at position #1 with {\arrow{>}}},
  postaction={decorate}}}
\newcommand*{\bigcdot}{}
\DeclareRobustCommand*{\bigcdot}{%
  \mathbin{\mathpalette\bigcdot@{}}%
}
\newcommand*{\bigcdot@scalefactor}{.5}
\newcommand*{\bigcdot@widthfactor}{1.15}
\newcommand*{\bigcdot@}[2]{%
  \sbox0{$#1\vcenter{}$}
  \sbox2{$#1\cdot\m@th$}%
  \hbox to \bigcdot@widthfactor\wd2{%
    \hfil
    \raise\ht0\hbox{%
      \scalebox{\bigcdot@scalefactor}{%
        \lower\ht0\hbox{$#1\bullet\m@th$}%
      }%
    }%
    \hfil
  }%
}
\colorlet{darkblue}{blue!90!black}
\colorlet{darkred}{red!90!black}
\colorlet{darkgreen}{green!70!black}
\def\f#1#2{\textstyle{#1\over #2}}
\def\xx {{\boldsymbol x}}
\def\eps{\varepsilon}
\def\e{{\rm e}} 
\def\dd{{\rm d}}
\def\sp{{\rm span}}
\def\de{{\partial}}
\def\id{\mathord{\mathrm{id}}}
\newtheorem{proposition}{Proposition}[section]
\newtheorem{theorem}[proposition]{Theorem}
\newtheorem{corollary}[proposition]{Corollary}
\newtheorem{lemma}[proposition]{Lemma}
\theoremstyle{definition}
\newtheorem{remark}[proposition]{Remark}
\numberwithin{equation}{section}
\title{A noise-induced transition in the Lorenz system}
\author{Michele Coti Zelati\orcidlink{0000-0002-2495-2212} and Martin Hairer\orcidlink{0000-0002-2141-6561}}
\institute{Department of Mathematics, Imperial College London, London, SW7 2AZ, UK\\
\email{m.coti-zelati@imperial.ac.uk, m.hairer@imperial.ac.uk}
}
\begin{document}



\maketitle

\begin{abstract}
We consider a stochastic perturbation of the classical Lorenz system in the range of parameters for which the
origin is the global attractor. 
We show that adding noise in the last component causes a transition from a unique 
 to exactly two ergodic invariant measures. The bifurcation threshold depends 
 on the strength of the noise: if the noise is weak, the only invariant measure is Gaussian,
 while strong enough noise  causes the appearance 
  of a second ergodic invariant measure.

\end{abstract}

\tableofcontents

\section{Introduction}

The classical Lorenz system \cite{LorenzOriginal} is a very popular prototypical toy model
for chaos / turbulence \cite{LorenzTurbulence}. The traditional way of writing this system is given by 
\begin{equ}[e:LorenzDet]
\dot{X}=\sigma(Y-X)\;,\qquad
\dot{Y}=X(\rho-Z)-Y\;,\qquad 
\dot{Z}=-\beta Z+XY\;,
\end{equ}
with parameter values $\sigma = 10$ and $\beta = 8/3$. Changing the value $\rho$ allows to 
explore a variety of different behaviours \cite{BookColin,OrderChaos}.
In particular, for $\rho < 1$, \eqref{e:LorenzDet} admits a unique fixed point at the origin which eventually 
attracts every single solution, while for $\rho > 1$ it admits two further `non-trivial' fixed points.
These fixed points become unstable at $\rho = \sigma{3+\beta+\sigma \over \sigma - \beta - 1} \approx 24.74$, 
after which the system exhibits either a chaotic attractor or stable limit cycles, see for example \cite{Exists}.
For very large values of $\rho$ ($\rho \gtrsim 313$), the system admits a stable limit cycle
which undergoes a cascade of period-doubling bifurcations as one decreases $\rho$. 

The vertical axis $H = \{(X,Y,Z)\,:\, X=Y=0\}$ is invariant for all parameter values and, 
for $\beta > 0$, it is contained in the stable manifold of the origin.
The aim of this article is to explore how \eqref{e:LorenzDet} behaves in the stable regime $\rho < 1$
under the addition of noise to the $Z$-component.
More precisely, we consider the modified system
\begin{equ}[e:Lorenz]
\dot{X}=\sigma(Y-X)\;,\qquad
\dot{Y}=X(\rho-Z)-Y\;,\qquad
\dot{Z}=-\beta Z+XY + \hat\alpha\,\xi\;,
\end{equ}
where $\hat\alpha > 0$ and $\xi$ denotes white noise. 
For all values of $\hat\alpha$, this system admits as invariant measure the measure $\nu_0$ with $\nu_0(H) = 1$ under which
$Z \sim \CN(0,\hat\alpha^2/2\beta)$. 
The question we consider is whether it admits other invariant measures supported in $\R^3 \setminus H$. 
Our main result then states that while $\nu_0$ is the unique invariant measure for
\eqref{e:Lorenz} for small values of $\hat \alpha$, it necessarily admits a second invariant measure for large values.

\begin{theorem}\label{theo:main}
For any $\sigma, \beta > 0$ and any $\rho < 1$, there exist values 
$0 < \alpha_\star \le \alpha^\star < \infty$ such that
\begin{enumerate}
\item For $0 \le \hat\alpha < \alpha_\star$, \eqref{e:Lorenz} admits $\nu_0$ as its unique invariant measure.
\item For $\hat\alpha > \alpha^\star$, \eqref{e:Lorenz} admits exactly two ergodic 
invariant measures: $\nu_0$ and another measure $\nu_\star$. Furthermore,
$\nu_\star$ has a smooth density with respect to Lebesgue measure on $\R^3 \setminus H$ and there exists $\kappa > 0$
such that $\int (x^2 + y^2)^{-\kappa} \nu_\star(\dd x,\dd y,\dd z) < \infty$.
\end{enumerate}
For $\rho \ge 1$, there exists $\alpha^\star \ge 0$ such that the second statement still holds.
\end{theorem}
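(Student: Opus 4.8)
The plan is to reduce both parts of the theorem to the sign of a single number: the top Lyapunov exponent $\lambda(\hat\alpha)$ of the linearisation of \eqref{e:Lorenz} transverse to the invariant plane $H$. Since the $(X,Y)$-equations are linear in $(X,Y)$ — the only nonlinearity, $XY$, sits in the $\dot Z$-equation — this linearisation is the random linear ODE $\dot v = A(Z_t)\,v$ on $\R^2$, where $A(z)=\left(\begin{smallmatrix}-\sigma & \sigma\\ \rho-z & -1\end{smallmatrix}\right)$ and $Z$ is the \emph{exact} stationary Ornstein–Uhlenbeck process $\dot Z=-\beta Z+\hat\alpha\,\xi$, $Z\sim\CN(0,\hat\alpha^2/2\beta)$ (along $H$ there is no feedback from $(X,Y)$ into $Z$). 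Writing $v=r\theta$ with $\theta$ on the projective line, the pair $(\theta_t,Z_t)$ is a hypoelliptic diffusion with a unique stationary measure $\mu_{\hat\alpha}$, one has $\ddt\log r_t=q(\theta_t,Z_t)$ with $q(\theta,z)=\langle A(z)\theta,\theta\rangle$, and the Furstenberg–Khasminskii formula gives $\lambda(\hat\alpha)=\int q\,\dd\mu_{\hat\alpha}$ (finite, since $q$ grows only linearly in $z$ against a Gaussian marginal). I would then set $\alpha_\star=\sup\{a\ge0:\lambda(\hat\alpha)<0\text{ on }[0,a)\}$ and $\alpha^\star=\inf\{a\ge0:\lambda(\hat\alpha)>0\text{ on }(a,\infty)\}$, so that $\alpha_\star\le\alpha^\star$ is immediate, and it remains to establish: \textbf{(a)} if $\rho<1$ then $\lambda(\hat\alpha)<0$ for $\hat\alpha$ small, hence $\alpha_\star>0$; \textbf{(b)} $\lambda(\hat\alpha)\to+\infty$ as $\hat\alpha\to\infty$, hence $\alpha^\star<\infty$; \textbf{(c)} $\lambda(\hat\alpha)<0$ forces $\nu_0$ to be the unique invariant measure; \textbf{(d)} $\lambda(\hat\alpha)>0$ forces the existence of a second ergodic measure $\nu_\star$ with the stated properties, and rules out any other.

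For \textbf{(a)}, $\lambda(0)$ is the top eigenvalue of $A(0)=\left(\begin{smallmatrix}-\sigma&\sigma\\ \rho&-1\end{smallmatrix}\right)$, which is negative exactly when $\rho<1$. Rather than argue by continuity I would work directly inside the ODE: picking $P>0$ with $A(0)^\top P+PA(0)<0$ one gets $\ddt\log\langle Pv_t,v_t\rangle\le -c+C|Z_t|$, hence $\lambda(\hat\alpha)\le\tfrac12\bigl(-c+C\,\mathbf E|Z_\infty|\bigr)\le -c'+C'\hat\alpha<0$ for $\hat\alpha$ small. For \textbf{(b)} one needs the genuinely quantitative fact that $\lambda(\hat\alpha)\to+\infty$, and \emph{this is the main obstacle}. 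The mechanism is adiabatic (WKB): when $Z$ is large and negative, $A(Z)$ has real eigenvalues $\approx\pm\sqrt{\sigma|Z|}$ with spectral gap $\gg\beta$, so $\theta$ relaxes to the dominant eigendirection much faster than $Z$ moves, forcing $q(\theta,Z)\approx\sqrt{\sigma|Z|}$; on $\{Z\ge -M\}$ the rotation of $\theta$ keeps the $\mu_{\hat\alpha}$-average of $q$ bounded below; integrating $q$ against the Gaussian law of $Z$ then yields $\lambda(\hat\alpha)\gtrsim\hat\alpha^{1/2}$. Making this rigorous — e.g.\ by producing a function $F$ on the projective cylinder with $q+\mathcal L_0 F\ge c\sqrt{|z|}\,\mathbf 1_{z\le -M}-C$ and invoking $\int\mathcal L_0 F\,\dd\mu_{\hat\alpha}=0$, where $\mathcal L_0$ generates $(\theta_t,Z_t)$ — is where most of the work goes.

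For \textbf{(c)}, when $\lambda(\hat\alpha)<0$ the plane $H$ is stochastically stable: near $H$ the only nonlinearity $r^2\theta_1\theta_2$ is $O(r^2)$, so the system is a small perturbation of the linear ODE, and $\lambda<0$ implies, via a Lyapunov function of the form $(x^2+y^2)^{\kappa}\Theta(\theta,z)$ with $\Theta>0$ the principal eigenfunction of $\mathcal L_0+2\kappa q$ for $\kappa$ small, that $(X_t,Y_t)\to 0$ a.s.\ from a neighbourhood of $H$. An irreducibility argument — from any point, forcing $Z$ to a large positive value makes $(X,Y)$ spiral into an arbitrary neighbourhood of $H$, which the Stroock–Varadhan support theorem turns into a positive-probability event — then upgrades this, via the Markov property, to $(X_t,Y_t)\to0$ a.s.\ from every initial condition. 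Hence every invariant measure charges $H$; since on $H$ the dynamics is the uniquely ergodic Ornstein–Uhlenbeck process for $Z$, it must equal $\nu_0$.

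For \textbf{(d)}, when $\lambda(\hat\alpha)>0$ one builds a Lyapunov function forbidding accumulation on $H$: taking the principal eigenpair $(\mathcal L_0-2\kappa q)\Psi_\kappa=\mu(\kappa)\Psi_\kappa$ with $\Psi_\kappa>0$ and $\mu(\kappa)=-2\lambda\kappa+o(\kappa)<0$ for $\kappa$ small, the function $(x^2+y^2)^{-\kappa}\Psi_\kappa(\theta,z)$, glued smoothly away from $H$ to the classical Lorenz energy $\rho X^2+\sigma Y^2+\sigma(Z-2\rho)^2$, yields a global $V$ with $\mathcal L V\le -cV+C$. Krylov–Bogolyubov for the process started off $H$ (which never reaches $H$, the $(X,Y)$-equation being linear in $(X,Y)$) produces an invariant probability measure $\nu_\star$ with $\int V\,\dd\nu_\star<\infty$, in particular $\nu_\star(H)=0$ and $\int(x^2+y^2)^{-\kappa}\,\nu_\star(\dd x,\dd y,\dd z)<\infty$. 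Hörmander's condition holds on $\R^3\setminus H$: writing $V_0$ for the drift of \eqref{e:Lorenz}, the iterated brackets of the noise direction $\partial_z$ with $V_0$ span $\R^3$ at every point with $x\neq0$ or $y\neq0$ — indeed $[\partial_z,V_0]=-x\,\partial_y-\beta\,\partial_z$ and its further brackets with $V_0$ pick up $\partial_y$ and then $\partial_x$ — whereas on $H$ they span only $\R\,\partial_z$; hence the process on $\R^3\setminus H$ is strong Feller with smooth transition densities. Together with controllability on $\R^3\setminus H$ (steering $(X,Y)$ through $Z$ while staying off $H$), $\nu_\star$ is the unique, hence ergodic, invariant measure there, and it has a smooth density. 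Since both $H$ and $\R^3\setminus H$ are absorbing, every ergodic invariant measure of \eqref{e:Lorenz} equals $\nu_0$ or $\nu_\star$, which is statement (2). The case $\rho\ge1$ is handled identically, the only change being that $\lambda(0)\ge0$, so that only \textbf{(b)} and \textbf{(d)} are used and no lower threshold is claimed.
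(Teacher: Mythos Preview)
Your proposal is essentially the paper's approach: reduce both statements to the sign of the top Lyapunov exponent $\lambda_\alpha$ of the transverse linearisation (the paper's Theorem~\ref{theo:Lyapunov}), determine this sign for small and large noise (Theorem~\ref{theo:behaviourLyap}), and obtain the at-most-two-ergodic-measures statement from H\"ormander plus controllability (Theorem~\ref{theo:basic}).

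A few implementational differences are worth flagging. Your small-$\hat\alpha$ argument via a fixed quadratic form $P$ is simpler than the paper's, which actually computes the limit $\lambda_0$ by showing that the projective dynamics concentrates on the stable fixed point; your shortcut suffices for the theorem as stated. For the Lyapunov function you invoke a principal eigenfunction $\Psi_\kappa$ of $\CL_0-2\kappa q$, whereas the paper solves the Poisson equation $\CL_0 g_\alpha=\lambda_\alpha-q$ and uses $\e^{-\kappa r}(1-\kappa g_\alpha+\delta\e^{\eps z^2})$. These are equivalent to leading order in $\kappa$, but since the cylinder $S^1\times\R$ is non-compact, the existence and growth control of your $\Psi_\kappa$ requires exactly the compact-resolvent machinery the paper develops in Lemma~\ref{lem:bounds}; also your ``glued to the Lorenz energy'' must in fact be an \emph{exponential} of the energy (the paper's $V_1$) in order to dominate the $\e^{\eps z^2}$ growth of the corrector away from $H$. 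Finally, you correctly identify the $\hat\alpha\to\infty$ step as the main obstacle and the adiabatic mechanism behind it; the paper's actual proof is considerably more delicate than your sketch---an excursion decomposition, half a dozen regimes in $|z|$, and careful treatment of the region where $A(Z)$ loses its spectral gap---but your outline points in the right direction.
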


\begin{proof}
The fact that for $\hat \alpha > 0$ \eqref{e:Lorenz} admits at most one ergodic invariant
measure besides $\nu_0$ is the content of Theorem~\ref{theo:basic}.
Theorem~\ref{theo:Lyapunov} links the existence of the additional invariant measure $\nu_\star$
to the sign of the quantity $\lambda_\alpha$ whose asymptotic behaviour for both small and large
values of $\hat \alpha$ is obtained in Theorem~\ref{theo:behaviourLyap}. It remains to note that 
one always has $\lambda_\alpha > 0$ (and therefore existence of $\nu_\star$) for $\hat \alpha$
large enough, while its sign as $\hat \alpha \to 0$ is negative when $\rho < 1$ and positive
when $\rho > 1$. 
\end{proof}

\begin{figure}
\begin{center}
\input{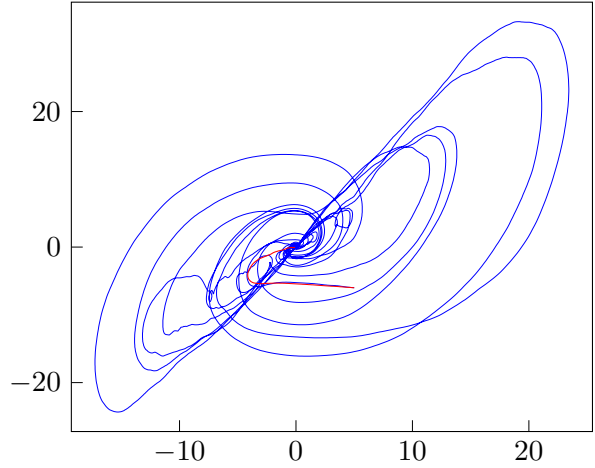}
\end{center}
\caption{The blue trajectory shows a simulation of~\eqref{e:Lorenz} in the $(x,y)$ plane with 
$\sigma = 10$, $\beta = 8/3$, $\rho = 1/2$, and $\hat\alpha = 30$. 
The red trajectory is obtained by using the same parameters, initial condition,
and realisation of the noise, except that $\hat\alpha = 10$.}\label{fig:simul}
\end{figure}

\begin{remark}
One would naturally expect to have $\alpha_\star = \alpha^\star$, but we cannot guarantee this at the moment.
It does however follow from our analysis that, for all values of $\hat\alpha$, \eqref{e:Lorenz} admits at most one
ergodic invariant measure besides $\nu_0$ and that the set of values $\hat\alpha$ for which $\nu_0$ is the unique
invariant measure consists of finitely many intervals. 
One would also expect to have $\alpha^\star = 0$ when $\rho> 1$ since then $H$ is already
linearly unstable for the deterministic system, but although our results do indeed guarantee that
in this case the system admits a unique additional invariant measure $\nu_\star$ for all 
$\hat \alpha \in (0,\delta)$ for some $\delta > 0$, we cannot rule out the existence of 
an intermediate range of values for which $\nu_0$ would be the unique invariant measure.
\end{remark}

\begin{remark}
One motivation for the study of \eqref{e:Lorenz} is that this provides a toy model
for the following situation. Consider the 2D Navier--Stokes equations on a torus with 
additive translation-invariant Gaussian forcing. It was shown in \cite{NSErgodic,HMGap}
(see also the earlier works \cite{EMS,BKL,KS} showing similar results under stronger non-degeneracy assumptions) 
that this system admits a unique invariant measure under a very weak
non-degeneracy assumption on the noise. This result however fails to apply to the situation where
the noise is itself periodic with a period strictly smaller (say half) than that of the torus on which the
system is posed. In this case, we know by \cite{NSErgodic} that it admits a unique invariant measure concentrated on functions
with the same period as the noise and, by \cite{JonathanDissipative}, that this measure is unique at small enough Reynolds number.
A natural question then is whether a bifurcation appears at high Reynolds number, 
as one would expect
from heuristic considerations. Our result can be viewed as a first mathematically rigorous result pointing 
in the direction of a positive answer. 
\end{remark}

\begin{remark}
The behaviour observed here can be contrasted with that observed in \cite{NoiseStable,NoiseStable2}, where the
authors exhibit a system with a quadratic nonlinearity which can explode in finite time in the absence
of noise but admits global solutions (and a unique invariant measure) in the presence of noise.
\end{remark}

Figure~\ref{fig:simul} shows a simulation of \eqref{e:Lorenz} for $\rho = 1/2$ and two different values of $\hat\alpha$.
For $\hat\alpha > \alpha^\star$, a typical trajectory consists of relatively long stretches of time spent in the 
vicinity of $H$, interspersed with excursions away from $H$. These excursions all escape in roughly the same direction
and the $Z$-coordinate (not depicted in the figure) is always quite negative when this happens. When $Z$ becomes positive,
they then spiral back in towards $H$. This can be understood by a linear analysis of the two-component system for 
a fixed value of $Z$: for $Z < \rho -1$ this system has one stable and one unstable direction, while it is stable
for all other values, exhibiting oscillations when $Z > \rho + (1-\sigma)^2 / (4\rho)$.  A good approximation of this behaviour
can be understood in terms of averaging of the eigenvalues of the first two equations of \eqref{e:Lorenz} with respect to the
Gaussian measure $\nu_0$, although this is not completely correct (see Remark \ref{rem:heuristics}).

\begin{remark}
The recent work \cite{Zdzislaw} analyses a toy model very similar to ours and with the
same underlying motivation. In our notations, their toy model reads
\begin{equ}[e:Simple]
\dot{X}=-X(1-Z)\;,\quad
\dot{Y}=-Y(\rho -Z)\;,\quad
\dot{Z}=-\beta Z - (X^2 + Y^2) + \kappa +\hat\alpha\,\xi\;.
\end{equ}
Although this appears on the face of it to be a three-dimensional model, 
it is effectively two-dimensional: if $\rho = 1$, then the ratio $X/Y$ remains constant in time. Otherwise, one has
\begin{equ}
Y(t) = {Y_0 \over X_0} X(t) \,\e^{(1-\rho)t}\;,
\end{equ}
so that one can reduce oneself to $Y = 0$ when $\rho > 1$ and $X=0$ when $\rho < 1$.
We deduce from this simple observation a slight strengthening of~\cite[Thm~3.6]{Zdzislaw}, namely that if $\kappa > \beta (1\wedge \rho)$,
then \eqref{e:Simple} admits \textit{exactly} $3$ ergodic invariant measures when $\rho \neq 1$
and uncountably many ergodic invariant measures when $\rho = 1$. Note that this
does not depend on $\hat\alpha$, it is in particular also true for $\hat\alpha = 0$ in which
case the invariant measures are concentrated on fixed points. 

A version of the Lorenz system in which the parameter $\rho$ randomly switches between two ``unstable'' values 
(28 and close to 28) was  analysed in \cite{BH12,Strickler}.  In these works, the existence of exactly two invariant 
measures was proved by combining an extension of H\"ormander condition
to piecewise deterministic systems and the precise knowledge of the Lyapunov exponents of the system.
\end{remark}

\begin{remark}
A general framework for verifying the stability / instability for an invariant subset of a Markov process was 
given in~\cite{Benaim} in terms of existence of suitable Lyapunov-type functions. From this point of view,
our approach is much less sophisticated, as we simply look for a function $V \colon \R^3 \setminus H \to \R_+$
with compact sublevel sets and such that $\CL V \le K - c V$. The main difficulty in our case is to be able
to actually build a Lyapunov function when $\alpha$ is large and to show that this construction must break down at some sufficiently
low value of $\alpha$. 
\end{remark}

The structure of the remainder of this article is as follows. First, in Section~\ref{sec:Notation} we perform
a simple change of variables that brings \eqref{e:Lorenz} in a slightly more canonical form
and we introduce some notation. In Section~\ref{sec:HypoControl}, we then provide a preliminary analysis 
of the equation which shows that it is irreducible and strong Feller on $\R^3 \setminus H$, so that
in particular it can have at most one additional ergodic invariant measure $\nu_\star$ besides $\nu_0$.
The core of our analysis is contained in the last two sections. First, in Section~\ref{sec:Lyapunov}, we construct
a Lyapunov function which allows to reduce the existence / non-existence of $\nu_\star$ to the 
behaviour of the invariant measure $\mu_\alpha$ for an auxiliary problem describing the behaviour
of a ``linearised'' version of \eqref{e:Lorenz} around $H$. The construction of the Lyapunov function
uses a philosophy similar to that used in \cite{Slow,HowHot}, namely to exhibit a ``fast'' dynamic in the regime
of interest and to use this to build a ``corrector'' which then allows to turn a ``na\"\i ve'' Lyapunov function
for the effective ``slow'' dynamic into a proper Lyapunov function for the full system.
Finally, Section~\ref{sec:largealpha}
analyses the behaviour of $\mu_\alpha$ as $\alpha \to \infty$, which allows us to conclude that 
\eqref{e:Lorenz} is indeed destabilised for any value of its parameters provided that $\alpha$ is 
sufficiently large. Since $\mu_\alpha$ describes a non-equilibrium system, it is not explicit and 
moreover has a quite complicated structure. The study of its asymptotic behaviour as $\alpha\to\infty$ 
is therefore highly non-trivial  and constitutes one of the main points of this article.

\subsection*{Acknowledgements}

{\small
MCZ gratefully acknowledges support by the Royal Society through a university research fellowship.
MH gratefully acknowledges support by the Royal Society through a research professorship.
}

\section{Notations}
\label{sec:Notation}

It will be convenient to write \eqref{e:Lorenz} in such a way that all of the 
arbitrary constants appear in the equation for $Z$. This will be convenient since 
we will be mostly interested in the regime where $x^2 + y^2 \ll 1$, so that 
$Z$ is close to a simple Ornstein--Uhlenbeck process. 
To this end, we define the constants
\begin{equ}
\chi = {2\over 1+\sigma}\;,\qquad \eta = {1+\sigma \over 2\sigma}\;,\qquad
\gamma = \chi \beta\;,\qquad \nu^2 = \chi^5\sigma\;,\qquad
\alpha =  \nu \sqrt\sigma \hat\alpha\;.
\end{equ}
as well as
\begin{equ}
\qquad z_\star = 2 + \chi^2 \sigma(\rho-1)\;.
\end{equ}
If we then perform the change of variables
\begin{equ}\label{e:newVar}
x(t) = {\nu \over \chi}  X(\chi t)\;,\qquad
y(t) = \nu \sigma \bigl(Y(\chi t) - X(\chi t)\bigr)\;,\qquad
z(t) =  z_\star - \chi^2 \sigma Z(\chi t)\;,
\end{equ}
the system \eqref{e:Lorenz} can be rewritten as
\begin{equ}[e:LorenzNew]
\dot{x}=y\;,\quad
\dot{y}=x(z-2)-2 y\;,\quad
\dot{z}=-\gamma (z - z_\star) + \alpha\,\xi - x(x+ \eta y)\;.
\end{equ}
We also introduce ``polar coordinates''
\begin{equ}
x =  \e^r \sin \theta\;,\qquad y = \e^r \left(\cos\theta - \sin\theta\right)\;,
\end{equ}
so that one can alternatively write the equations of motion as
\begin{equ}[e:equationR]
\dot\theta = 1- z \sin^2(\theta)\;,\qquad \dot r = -1 + \frac{z}{2} \sin (2\theta) \;.
\end{equ}
(These coordinates are chosen in such a way that \eqref{e:equationR} is as simple
as possible when $z = 0$ and can be ``guessed'' by looking at the explicit solution
of the damped harmonic oscillator describing the $(x,y)$ system with $z=0$.)
An important role will be played by the ``linearisation'' obtained by replacing 
the last equation in \eqref{e:LorenzNew} by the  Ornstein--Uhlenbeck process
\begin{equ}[e:defOU]
\dot{z}=-\gamma (z - z_\star) + \alpha\,\xi\;.
\end{equ}
We will use the notation $\CL$ for the generator of \eqref{e:LorenzNew} and $\CL_1$
for the generator of the ``linearised'' system, namely
\minilab{e:gen}
\begin{equs}
\CL &= y \d_x + \bigl(x(z-2)-2 y\bigr)\d_y - \bigl(x(x+ \eta y)+\gamma (z - z_\star)\bigr)\d_z
+ {\alpha^2\over 2}\d_z^2\;,\qquad \label{e:genL}\\
\CL_1 &= y \d_x + \bigl(x(z-2)-2 y\bigr)\d_y -\gamma (z - z_\star)\d_z
+ {\alpha^2\over 2}\d_z^2\;.\label{e:genL1}
\end{equs}
We will also use $\CL_0$ for the generator of the $(\theta,z)$-component of the linearised
system, namely
\minilab{e:gen}
\begin{equ}[e:genL0]
\CL_0 = \bigl(1- z \sin^2(\theta)\bigr)\d_\theta - \gamma (z - z_\star)\d_z
+ {\alpha^2\over 2}\d_z^2\;.
\end{equ}
We henceforth fix the values of the constants $\eta$, $\gamma$ and $z_\star$ appearing in 
our dynamic, but we will keep track on the dependence on $\alpha$.

In particular, we write $\mu_\alpha$ for the invariant measure 
on $S^1 \times \R$ for the diffusion with generator $\CL_0$. Such an invariant measure clearly
exists by Krylov--Bogoliubov. It is also quite easy to see that it is 
unique as a consequence of the controllability result shown in Proposition~\ref{prop:control} below
and the regularity result given by Proposition~\ref{prop:Hormander}. (See Theorem~\ref{theo:basic} for
a reference.)

\begin{remark}\label{rem:heuristics}
A ``na\"\i ve'' heuristic for the stability of $H$ goes as follows. Writing $\lambda_+(z)$ for the
largest real part of the eigenvalues of the system $(x,y)$ given in \eqref{e:LorenzNew} (with $z$ frozen),
one can verify that $\lambda_+(z) = -1 + \sqrt{z-1}\one_{z > 1}$.
This then suggests that $\alpha^\star$ is the smallest value such that
\begin{equ}
\E_\alpha \big(\sqrt{z-1}\one_{z > 1}\big) \ge 1\;,
\end{equ}
where the expectation is taken over the invariant measure $\CN(z_\star, \alpha^2/(2\gamma))$
for \eqref{e:defOU}. While this heuristic is incorrect, it is quite accurate in practice. For example,
for the parameters used in Figure~\ref{fig:simul}, it suggests $\alpha^\star \approx 27.04$
while numerical simulations suggest $\alpha^\star \approx 27.7$.
\end{remark}

\section{Hypoellipticity and control}
\label{sec:HypoControl}
The goal of this section is to analyse irreducibility and regularity properties on $\R^3 \setminus H$ of our stochastic Lorenz system.
We will prove the following result.
\begin{theorem}\label{theo:basic}
For every value of its parameters, \eqref{e:LorenzNew} admits at least one and at most two
ergodic invariant probability measures.
\end{theorem}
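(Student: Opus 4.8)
The plan is to separate the statement into an easy lower bound and a harder upper bound. For the lower bound, observe that the subspace $H=\{x=y=0\}$ is invariant under \eqref{e:LorenzNew}, and on $H$ the dynamics reduces to the Ornstein--Uhlenbeck process \eqref{e:defOU}. The latter has a unique invariant law $\CN(z_\star,\alpha^2/(2\gamma))$, which, regarded as a measure on $\R^3$ concentrated on $H$, is an invariant probability measure $\nu_0$ for \eqref{e:LorenzNew}; since $H$ is invariant and the restricted dynamics is ergodic, $\nu_0$ is itself ergodic. Hence there is always at least one ergodic invariant probability measure.

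For the upper bound I would first record two structural facts. The set $H$ is forward invariant; and the open set $\R^3\setminus H$ is forward invariant as well. The latter holds because the $(x,y)$--subsystem is linear with random, but almost surely locally bounded, time-dependent coefficients, so a non-trivial solution cannot reach the origin in finite time; concretely, in the coordinates \eqref{e:equationR} one has $\dot r\ge -1-|z|/2$, and $z$ does not explode (the Lorenz drift admits the usual global trapping structure, unaffected by additive noise), so $r$ stays finite and the solution stays off $H$. Consequently $\mathbf 1_H$ is an invariant function for the Markov semigroup $P_t$, so every ergodic invariant measure $\mu$ satisfies $\mu(H)\in\{0,1\}$. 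If $\mu(H)=1$ then $\mu$ is invariant for the process on $H$, i.e.\ for \eqref{e:defOU}, hence $\mu=\nu_0$. It therefore remains to prove that there is at most one ergodic invariant measure carried by $\R^3\setminus H$.

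To this end I would invoke the two properties this section is devoted to: irreducibility of the restricted Markov process on $\R^3\setminus H$, to be established as Proposition~\ref{prop:control} via the Stroock--Varadhan support theorem together with an explicit controllability statement for the control system associated with \eqref{e:LorenzNew}; and smoothness of its transition probabilities, to be established as Proposition~\ref{prop:Hormander} via H\"ormander's theorem, once one checks that the Lie algebra generated by $\partial_z$ and the drift of \eqref{e:LorenzNew} spans $\R^3$ at every point of $\R^3\setminus H$. The latter yields the strong Feller property on $\R^3\setminus H$. A standard Doob--Khasminskii argument then shows that for $t>0$ the kernels $P_t(x,\cdot)$, $x\in\R^3\setminus H$, are pairwise mutually equivalent, so any invariant probability measure carried by $\R^3\setminus H$ is equivalent to a fixed one; since two distinct ergodic measures are mutually singular, there is at most one. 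Combining the two cases, \eqref{e:LorenzNew} has at most the two ergodic invariant measures $\nu_0$ and (possibly) one more, which together with the first paragraph proves the theorem.

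The step I expect to be the main obstacle is the H\"ormander computation, since the bracket condition visibly degenerates on $H$: at $x=y=0$ every iterated bracket stays proportional to $\partial_z$, consistent with the fact that $\nu_0$ sits there. It must be recovered \emph{uniformly} on $\R^3\setminus H$. At points with $x\neq0$ the fields $\partial_z$, $[\partial_z,\,\cdot\,]$ of the drift, and one further bracket already span $\R^3$; but at points with $x=0$ and $y\neq0$ one is forced to pass to iterated brackets of higher order and to verify that the resulting three vector fields are genuinely independent there. The other non-routine ingredient is the controllability statement underlying Proposition~\ref{prop:control}: one has to steer the damped oscillator $\ddot x+2\dot x+(2-z(t))\,x=0$, with $z$ in the role of a nearly free control, from an arbitrary non-zero $(x,\dot x)$ to any non-zero target and then bring $z$ to its own target value, all while respecting that it is $\dot z$ rather than $z$ that is directly controlled.
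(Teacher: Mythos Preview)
Your proposal is correct and matches the paper's argument: existence via the Ornstein--Uhlenbeck invariant law on the invariant axis $H$, and the ``at most two'' via strong Feller (H\"ormander, Proposition~\ref{prop:Hormander}) plus topological irreducibility (controllability, Proposition~\ref{prop:control}, combined with the support theorem) on $\R^3\setminus H$. The paper handles the H\"ormander check at $x=0$, $y\neq0$ by including the drift $X_0$ itself in the spanning family rather than going to higher-order brackets as you anticipate.
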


\begin{proof}[Proof of Theorem \ref{theo:basic}]
Since the restriction of our system to $H$ is simply an Ornstein--Uhlenbeck process,
the Gaussian with variance $\alpha^2 / (2\gamma)$ centred at $z_\star$ is the unique ergodic invariant measure there.
On $\R^3 \setminus H$, one has at most
one invariant probability measure since Proposition~\ref{prop:Hormander} implies that the transition
probabilities are strong Feller, while Proposition~\ref{prop:control}, combined
with the support theorem~\cite{SV} implies that every point of $\R^3 \setminus H$ is accessible. 
We conclude by for example \cite[Cor.~7.8]{ErgodicSPDE}.
\end{proof}

The fact that the transition
probabilities are strong Feller is contained in the following proposition.

\begin{proposition}\label{prop:Hormander}
The system \eqref{e:LorenzNew} satisfies H\"ormander's condition on $\R^3 \setminus H$.
\end{proposition}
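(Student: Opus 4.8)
The plan is to verify H\"ormander's bracket-generating condition directly at every point of $\R^3 \setminus H$ by computing a few Lie brackets of the drift and diffusion vector fields. Write the system \eqref{e:LorenzNew} in Stratonovich form with drift
\begin{equ}
X_0 = y\,\d_x + \bigl(x(z-2) - 2y\bigr)\d_y - \bigl(x(x+\eta y) + \gamma(z-z_\star)\bigr)\d_z\;,
\end{equ}
and single diffusion field $X_1 = \alpha\,\d_z$ (the Stratonovich correction vanishes since $X_1$ is constant). H\"ormander's condition requires that the Lie algebra generated by $X_1$, together with the iterated brackets $[X_0, X_1]$, $[X_0,[X_0,X_1]]$, etc., spans $\R^3$ at each point of $\R^3 \setminus H$.

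First I would compute $X_2 := [X_1, X_0] = \alpha(\d_z X_0)$, which picks out the $z$-dependence of the drift: $X_2 = \alpha\bigl(x\,\d_y - \gamma\,\d_z\bigr)$. Since we already have $\d_z$ from $X_1$, this gives us $x\,\d_y$ as a new direction, i.e. the field $\d_y$ wherever $x \neq 0$. Next I would compute $X_3 := [X_2, X_0]$ (or equivalently a bracket of $x\,\d_y$ with $X_0$); the relevant term is $[\d_y, X_0] = \d_y X_0 = \d_x + (\,\cdot\,)\d_y - \eta x\,\d_z$, which contributes the direction $\d_x$. So modulo the lower-order directions already obtained, the brackets up to depth three produce $\d_z$, $x\,\d_y$, and $x\,\d_x + (\text{lower order})$. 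At any point with $x \neq 0$ these three span $\R^3$.

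It remains to handle points of $\R^3 \setminus H$ with $x = 0$ but $(x,y) \neq (0,0)$, i.e. $x = 0$, $y \neq 0$. There I would instead use the bracket of $\d_z$ with $X_0$ to get the $z$-derivative of the drift and then bracket \emph{that} with $X_0$ once more, tracking the term coming from $\d_y$ acting through the $y\,\d_x$ piece: the key observation is that $[\d_y, X_0]$ contains $\d_x$ with coefficient $1$ (independent of $x$, $y$, $z$), so once $\d_y$ is available the field $\d_x$ is available \emph{everywhere}; and when $x = 0$, $y \neq 0$, a further bracket of $\d_x$ with $X_0$ produces the $x$-derivative of the drift, whose $\d_y$-component is $z - 2$ and whose $\d_z$-component is $-(2x + \eta y) = -\eta y \neq 0$, giving a vector with a nonzero $\d_z$-component that is linearly independent of $\d_x$ and $\d_y$. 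The only subtlety is to first recover $\d_y$ at such points: this comes from $[X_1, [X_1, [X_0,\ldots]]]$-type brackets or, more cleanly, by noting that bracketing $\d_x$ (already available) with $X_0$ twice feeds $y\,\d_x$ forward into a $\d_y$ term. I would organise the computation as a short induction / case split rather than a single formula.

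The main obstacle is purely bookkeeping: one must be careful that the case $x = 0$ is genuinely covered, since several of the "obvious" brackets degenerate there, and one must avoid the trap of concluding spanning only on $\{x \neq 0\}$. The cleanest write-up is to first establish that $\d_x$ and $\d_y$ lie in the bracket-generated Lie algebra at \emph{every} point of $\R^3$ (using that the relevant coefficients $1$ and the structure of $X_0$ are $x$-independent), and then to exhibit one more bracket whose $\d_z$-component is a nonvanishing multiple of $x^2 + y^2$ (or of $x$ together with $y$), which is nonzero precisely on $\R^3 \setminus H$. No hard analysis is involved; the statement follows once these finitely many brackets are written down.
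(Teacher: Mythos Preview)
Your computation for $x \neq 0$ is correct and essentially the same as the paper's.

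The gap is at $x=0$, $y\neq 0$. There $X_2 = [X_0,X_1] = \alpha x\,\d_y - \alpha\gamma\,\d_z$ collapses to a multiple of $\d_z$, so none of the brackets you have written down produce $\d_x$ or $\d_y$, and your argument becomes circular: you propose to recover $\d_y$ by ``bracketing $\d_x$ (already available) with $X_0$'', but $\d_x$ is not available at such points. Your alternative of $[X_1,[X_1,\cdot]]$-type brackets fails too, because $X_2$ is independent of $z$ and hence $[X_1,X_2]=0$. The closing claim that $\d_x$ and $\d_y$ lie in the bracket algebra at \emph{every} point of $\R^3$ is also false: on $H$ the algebra is just $\R\,\d_z$.

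The paper's route is shorter because it uses the drift $X_0$ itself as one of the spanning vectors. This is legitimate: H\"ormander's theorem for $\CL = X_0 + \tfrac12 X_1^2$ asks only that the Lie algebra generated by $\{X_0,X_1\}$ have full rank, so $X_0$ need not appear solely inside brackets. At $x=0$, $y\neq 0$ one has $X_0 = y\,\d_x - 2y\,\d_y + (\cdots)\d_z$, which supplies the $\d_x$-direction directly; together with $X_1$ and a single bracket contributing $\d_y$ at $x=0$ this already spans $\R^3$. For that last ingredient, a careful computation of $X_3 = [X_0,X_2]$ gives $\d_y$-coefficient $-\alpha\bigl(x(2+\gamma)+y\bigr)$, equal to $-\alpha y \neq 0$ at $x=0$, so $\{X_0,X_1,X_3\}$ span there. (The paper's displayed $X_3$ drops the $-\alpha y$ term and its last line reads $\{X_0,X_1,X_2\}$; both are slips repaired by exactly this computation.)
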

%

\begin{proof}
We want to show that the $\CC^\infty$-module $\CM$ generated by
the iterated Lie brackets of the two vector fields
\begin{equ}
X_0= y\d_x + (x(z-2)-2 y)\d_y
-\big(\gamma (z - z_\star)  + x(x+ \eta y)\big)\d_z\;,
\quad
X_1= \alpha\d_z\;,
\end{equ}
so that $\CL = X_0 + {1\over 2}X_1^2$,
is of maximal rank at every point of $\R^3 \setminus H$.
A simple calculation shows that their Lie bracket is given by
\begin{equ}
X_2 =[X_0,X_1] =-  \alpha x \d_y +\alpha\gamma \d_z\;, 
\end{equ}
and that furthermore,
\begin{equ}
X_3 =[X_0,X_2]=
\alpha x	\d_x
-\alpha (y+(2+\gamma)x) \d_y
-\alpha \left(\eta x^2-\gamma^2\right)\d_z\;.
\end{equ}
Note that $\sp\{X_1,X_2,X_3\}=\R^3$ whenever $x\neq 0$,
while $\sp\{X_0,X_1,X_3\}=\R^3$ when $x=0$ and $y\neq 0$, and therefore 
H\"ormander's condition is satisfied everywhere on $\R^3 \setminus H$.
\end{proof}

The next result concerns controllability properties of the Lorenz system, and, in particular, accessibility to any point in $\R^3 \setminus H$.

\begin{proposition}\label{prop:control}
Given any initial condition $(x_0,y_0,z_0)\in \R^3 \setminus H$, any target point $(\bar{x},\bar{y},\bar{z})\in \R^3 \setminus H$, and any arbitrary $\eps>0$, there exists $T=T(|x|,|y|,|z|,\eps)\geq 0$ and a function $h\in \CC^1([0,T],\R)$ 
such that the unique solution to 
\begin{equ}[e:LorenzControl]
\dot{x}=y\;,\quad
\dot{y}=x(z-2)-2 y\;,\quad
\dot{z}=-\gamma (z - z_\star) - x(x+ \eta y)+h\;,
\end{equ}
with initial condition $(x_0,y_0,z_0)$ satisfies
$
|(x(T),y(T),z(T))-(\bar{x},\bar{y},\bar{z})|<\eps
$.
\end{proposition}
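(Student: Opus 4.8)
The plan is to reduce the problem to one in which the $z$-variable is steered \emph{directly}, and then to exploit the explicit form \eqref{e:equationR} of the equations in polar coordinates. For the reduction, observe that the assignments $h\mapsto z$ and $z\mapsto h$ determined by $\dot z = -\gamma(z-z_\star)-x(x+\eta y)+h$ set up a bijection between controls $h\in\CC^1([0,T])$ and functions $z\in\CC^2([0,T])$ with $z(0)=z_0$: given such a $z$, the first two equations of \eqref{e:LorenzControl} determine $(x,y)\in\CC^2\times\CC^1$, and then $h:=\dot z+\gamma(z-z_\star)+x(x+\eta y)\in\CC^1$; conversely, any $h$-controlled trajectory has $z\in\CC^2$ by the same identity. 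It therefore suffices to produce a suitable $z\in\CC^2$. Since $(x_0,y_0)\neq 0$ and $H=\{x=y=0\}$ is invariant for the first two (linear in $(x,y)$) equations whatever $z$ is, the solution $(x(t),y(t))$ never vanishes, so we may pass to the coordinates $(r,\theta)\in\R\times S^1$ of \eqref{e:equationR}; it is then enough to steer $(r,\theta)$ to within $\eps$ of the (well-defined) polar coordinates of $(\bar x,\bar y)$, and afterwards bring $z$ to $\bar z$ over one final short interval of length $\delta$, which perturbs $(x,y)$ by only $O(\delta)$.

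For the steering of $(\theta,r)$ I would concatenate three elementary regimes, glued with $\CC^\infty$ cutoffs over arbitrarily short intervals (introducing arbitrarily small errors): (i) with $z\equiv 0$ one has $\dot\theta=1$, $\dot r=-1$, so $\theta$ advances freely and $r$ decreases at unit rate — use this to bring $\theta$ to a fixed value $\theta^\star\in(\pi/8,\pi/4)$; (ii) with $z\equiv 1/\sin^2\theta^\star$ one has $\dot\theta=0$ and $\dot r=-1+\cot\theta^\star>0$, so $\theta$ stays frozen at $\theta^\star$ while $r$ grows linearly at a fixed positive rate — run this until $r$ reaches a large value $r_{\mathrm{big}}$ to be chosen; (iii) switch back to $z\equiv 0$ for a time $\tau$, which brings $r$ down to $r_{\mathrm{big}}-\tau$ and $\theta$ to $\theta^\star+\tau\bmod 2\pi$. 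Since $r_{\mathrm{big}}$ can be taken arbitrarily large (by prolonging phase (ii)), the required $\tau=r_{\mathrm{big}}-r_{\mathrm{target}}$ can be made to equal any sufficiently large real number; choosing it inside a window of length $2\pi$ so that $\theta^\star+\tau\equiv\theta_{\mathrm{target}}\ (\mathrm{mod}\ 2\pi)$ and then setting $r_{\mathrm{big}}=r_{\mathrm{target}}+\tau$ lands $(r,\theta)$ on target. Prepending a short interval that moves $z_0$ to $0$ and appending the short interval that moves $0$ to $\bar z$ then completes the construction, with $T$ the total time and all corner-smoothing errors absorbed by adjusting $\tau$ and shrinking $\eps$.

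I expect the main obstacle to be the \emph{global} character of the claim rather than any single computation: because $H$ is invariant (so there is no small-time local controllability near it) and because \eqref{e:LorenzControl} carries a genuine drift, the Hörmander-type bracket condition of Proposition~\ref{prop:Hormander} does not by itself give controllability, and one is forced to build the trajectory by hand. The structural facts that make this possible are exactly those highlighted by the authors' choice of coordinates: $\dot\theta=1-z\sin^2\theta$ makes $\theta$ directly steerable and allows it to be parked at any prescribed value, and at a parked angle with $\cot\theta>1$ the radial equation $\dot r=-1+z\sin\theta\cos\theta$ has a positive, time-unbounded drift (this is just sitting on the unstable eigendirection of the frozen-$z$ saddle). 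The rest is routine bookkeeping: checking that the gluings preserve $\CC^2$-regularity of $z$ — hence $\CC^1$-regularity of $h$ — and that the smoothing and the short initial/final $z$-adjustments change the endpoint by less than $\eps$.
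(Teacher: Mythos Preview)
Your argument is correct, and the approach is closely related to the paper's but organised differently. Both proofs begin with the same reduction (choose $z\in\CC^2$ freely, then read off $h$), and both alternate between a globally stable regime ($z=0$) and a saddle regime ($z>1$) so as to move radially outward along an unstable direction. The paper works in Cartesian $(x,y)$: it runs the $z=0$ spiral forward from $\xx_0$ until hitting the unstable manifold $\ell_u=\{x=y\}$ of the $z=5$ saddle, moves out along $\ell_u$, and then follows the $z=0$ spiral again, the last leg being determined by running backwards from $\bar\xx$; the three times $t_0,t_1,\bar t$ are matched explicitly and the trajectory is then mollified. You instead work directly in the polar coordinates $(r,\theta)$ of \eqref{e:equationR}, where the same three phases become completely explicit: $z=0$ gives $(\dot\theta,\dot r)=(1,-1)$, and parking $\theta$ at a fixed point $\theta^\star$ with $z=1/\sin^2\theta^\star$ gives $\dot r=-1+\cot\theta^\star>0$ (your phase~(ii) is exactly the paper's motion along $\ell_u$: $\tan\theta^\star=1/2$ corresponds to $\ell_u$ and $1/\sin^2\theta^\star=5$). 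What your route buys is that the matching becomes a one-line congruence $\tau\equiv\theta_{\mathrm{target}}-\theta^\star\pmod{2\pi}$ rather than the explicit construction of $t_0,t_1,\bar t$, and the gluing errors are absorbed automatically by the exponential stability of $\theta^\star$ in phase~(ii); the paper's route in exchange gives the transparent geometric picture of Figure~\ref{fig:control} and avoids introducing the polar coordinates before Section~\ref{sec:Lyapunov}.
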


\begin{proof}
Since $z$ in \eqref{e:LorenzControl} can be completely controlled by $h$, the proof relies on finding
a smooth curve $z$ so that $\xx_0=(x_0,y_0)$ and $\bar{\xx}=(\bar{x},\bar{y})$ can be connected by a solution
of the first two equations in \eqref{e:LorenzControl}.
The proof is divided in three steps.

\medskip

\noindent\emph{Step 1.} We first identify a non-smooth trajectory that connects $\xx_0$ with $\bar{\xx}$, 
as depicted in Figure \ref{fig:control}. 
Notice that for $\zeta\in \R$ given, the first two equations of \eqref{e:LorenzControl} constitute a two-dimensional linear system, that
can be written as
\begin{equ}
\begin{pmatrix}
\dot{x}\\
\dot{y}
\end{pmatrix}
=A
\begin{pmatrix}
x\\
y
\end{pmatrix},
\qquad
A=
\begin{pmatrix}
0& 1\\
\zeta-2  & -2
\end{pmatrix}.
\end{equ}
The eigenvalues of $A$ are
\begin{align}\label{eq:eigenvaluesA1}
\lambda_{1}(A)= -1 - \sqrt{\zeta-1} ,
\qquad \lambda_{2}(A)=-1 + \sqrt{\zeta-1}.
\end{align}
The idea is now to alternate between the globally stable dynamics (when $\zeta<1$)  for which $(0,0)$
is attractive, and the case in which there are a stable and an unstable manifold (when $\zeta>2$). 
We therefore choose $\zeta=0$, with corresponding matrix $A_0$, to implement the first scenario, 
and $\zeta=5$, with corresponding matrix $A_1$, to implement the second scenario. Notice that
$A_1$ has eigenvalues
\begin{align}
\lambda_{1}(A_1)= -3,
\qquad \lambda_{2}(A_1)=1.
\end{align}
 with corresponding eigenvectors
\begin{align}
e_{1}= \left(-1,3\right),\qquad e_{2}=\left(1,1\right),
\end{align}
so that the diagonal $\ell_u=\{(x,y):x=y\}$ is the unstable manifold of the system.
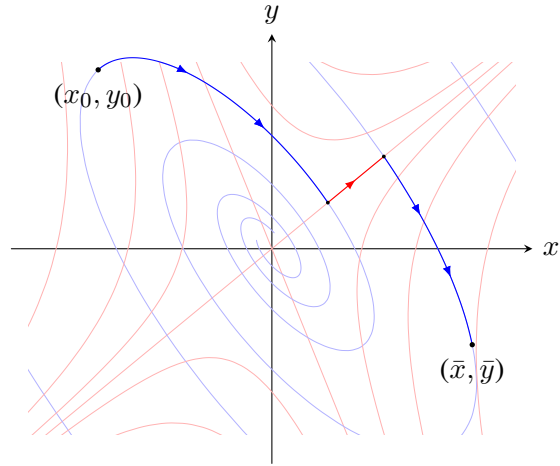
\begin{figure}[h!]
\begin{center}
\begin{tikzpicture}[cap=round,>=latex]
    \begin{axis}[ 
axis y line=center,
axis x line=middle,
axis line style = thin,
xlabel=$x$,
ylabel=$y$,
xmax=3,xmin=-3,
ymin=-3,ymax=3,
domain=-3:3,
samples=200,
no marks,
every axis x label/.style={
    at={(ticklabel* cs:1)},
    anchor=west,
},
every axis y label/.style={
    at={(ticklabel* cs:1)},
    anchor=south,
},
ticks=none
	]
	
\begin{scope}
	\clip (axis cs:-2.8,-2.6) rectangle (axis cs:2.8,2.6);

   	\addplot [domain=-3:3, samples=5, red!30, very thin] (x, x); 
	\addplot [domain=-2:2, samples=5, red!30, very thin] (x, -3*x); 

	\addplot [domain=-5:12, samples=100, smooth,very thin, blue!30] ({exp(-0.2*x)*(-2*cos(deg(x)) + 0.5*sin(deg(x)))}, {exp(-0.2*x)* (2.5*cos(deg(x)) +1.5*sin(deg(x)))}); 
	\addplot [domain=-4:14, samples=100,smooth, very thin,blue!30] ({2*exp(-0.2*x)*(-2*cos(deg(x)) + 0.5*sin(deg(x)))}, {2*exp(-0.2*x)* (2.5*cos(deg(x)) +1.5*sin(deg(x)))}); 

\foreach \i in {-1.2,-0.8,-0.5,0.5,0.8,1.2} 
{
\foreach \j in {-1,1}
{
	\addplot [domain=-3:15, samples=30,smooth, very thin,red!30] ({\i*(exp(0.2*x) + \j*exp(-0.3*x))}, {\i*(exp(0.2*x) - 3*\j*exp(-0.3*x))}); 
}}
\end{scope}
	\addplot [->-=0.33,->-=0.66,domain=1.7894:2.5, samples=200, blue] ({2*exp(-0.2*x)*(-2*cos(deg(x)) + 0.5*sin(deg(x)))}, {2*exp(-0.2*x)* (2.5*cos(deg(x)) +1.5*sin(deg(x)))}); 

	\node[label={270:{$(\bar{x},\bar{y})$}},circle,fill,inner sep=0.7pt] at (axis cs:2.307,-1.341) {};
	%
	\addplot [->-=0.33,->-=0.66,domain=0:1.7894,samples=100,blue] ({exp(-0.2*x)*(-2*cos(deg(x)) + 0.5*sin(deg(x)))}, {exp(-0.2*x)* (2.5*cos(deg(x)) +1.5*sin(deg(x)))}); 
	\node[label={270:{$(x_0,y_0)$}},circle,fill,inner sep=0.7pt] at (axis cs:-2,2.5) {};
	
	\addplot [->-=0.5,domain=0.6445:1.289, samples=5, red] (x, x); 

	\node[circle,fill,inner sep=0.5pt] at (axis cs:0.6445,0.6445) {};
	\node[circle,fill,inner sep=0.5pt] at (axis cs:1.289,1.289) {};

    \end{axis}

    \end{tikzpicture}
\end{center}
  \caption{The non-smooth trajectory $\xx_s$ constructed in \eqref{eq:non-smoothtraj}. A few orbits of
  the dynamic with $\zeta = 0$ are drawn in light blue and a few orbits with $\zeta = 5$ are drawn in light red.}
  \label{fig:control}
\end{figure}

Setting $\bar{\xx}:=(\bar{x},\bar{y})\neq0$, define
\begin{align} 
\bar{t}:=\inf\left\{t\geq 0: \e^{-A_0 t}\bar{\xx}  \in \ell_u\right\}<\infty\;.
\end{align}
Now, given $\xx_0=(x_0,y_0)$,
consider the forward solution to $\dot{\xx}=A_0\xx$ with initial condition 
$\xx(0)=\xx_0$. It is clear from \eqref{eq:eigenvaluesA1} that $\e^{A_0t}\xx_0\to0$ as $t\to\infty$ and that $\e^{A_0t}\xx_0$ intersects
$\ell_u$ infinitely many times. We define
\begin{equs}
t_0:=\inf\Big\{t\geq 0:\, \e^{A_0 t}\xx_0 \in \ell_u, \quad \e^{A_0 t}\xx_0 &\cdot \e^{-A_0 \bar{t}}\bar{\xx}>0,  \\
&\big|\e^{A_0 t}\xx_0\big|\leq \big|\e^{-A_0 \bar{t}}\bar{\xx}\big|\Big\}<\infty.
\end{equs}
The definition of the time $t_0$ makes sure that not only $ \e^{A_0 t_0}\xx_0 \in \ell_u$, but also that it is on the same
ray as $ \e^{-A_0 \bar{t}}\bar{\xx}$ and that enough time as passed so that $ \e^{A_0 t_0}\xx_0$ is closer to the origin than $ \e^{-A_0 \bar{t}}\bar{\xx}$.
Finally, we define $t_1\geq0$ to be the time such that
\begin{align}
\e^{A_1 t_1} \e^{A_0 t_0}\xx_0= \e^{-A_0 \bar{t}}\bar{\xx}.
\end{align}
Therefore, defining  $T:=t_0+t_1+\bar{t}$, we find that the piecewise continuous curve
\begin{equ}\label{eq:non-smoothtraj}
\xx_s(t)=\begin{cases}
\e^{A_0 t}\xx_0, \quad &t\in[0,t_0],\\
\e^{A_1 (t-t_0)}\e^{A_0 t_0}\xx_0, \quad &t\in[t_0,t_0+t_1],\\
\e^{A_0 (t-t_0-t_1)}\e^{A_1 t_1}\e^{A_0 t_0}\xx_0, \quad &t\in[t_0+t_1,T],
\end{cases}
\end{equ}
is such that $\xx_s(0)=\xx_0$ and $\xx_s(T)=\bar{\xx}$. 

\medskip

\noindent\emph{Step 2.}
We proceed with a suitable smoothing of the trajectory constructed in step 1. For a small $\delta\in(0,1)$, define a smooth function $\zeta_\delta$ such that $\zeta_\delta(0)=z_0$, $\zeta_\delta(T)=\bar{z}$ and
\begin{align}\label{eq:zetadelta}
\zeta_\delta(t)=
\begin{cases}
0, \quad &t\in [\delta,t_0],\\
5, \quad &t\in [t_0+\delta,t_0+t_1-\delta],\\
0, \quad &t\in[t_0+t_1,T-\delta].
\end{cases}
\end{align}
We now compare the dynamics of the ODEs generated by the matrices
\begin{align}
A(t)=
\begin{pmatrix}
0& 1\\
\zeta_\delta(t)-2  & -2
\end{pmatrix}
\end{align}
and 
\begin{align}
B(t)=
\begin{cases}
A_0, \quad &t\in [0,t_0],\\
A_1, \quad &t\in [t_0,t_0+t_1],\\
A_0, \quad &t\in[t_0+t_1,T].
\end{cases}
\end{align}
For $t\geq\tau\geq 0$, we denote by $\Phi^A_{t,\tau}, \Phi^B_{t,\tau}:\R^2\to\R^2$ the respective solution operators. 
From the definition above, it is clear that
$\Phi^B_{t,0}(\xx_0)=\xx_s(t)$.
Moreover, the underlying dynamics is the same for the two systems, up to a time-interval of size $4\delta$.
Since
\begin{align}\label{eq:linearestimate}
\sup_{t\in [0,T]}\left[\|A(t)\|+\|B(t)\| +\left|\xx_s(t)\right| \right]\leq C,
\end{align}
where $C$ is independent of $\delta$,
we deduce that 
\begin{align}\label{eq:deltaerror}
\sup_{t\in [0,T]}\left|\Phi^A_{t,0}(\xx_0) -\xx_s(t)\right| \leq C \delta,
\end{align}
for possibly a different $C$, independent of $\delta$.

\medskip

\noindent\emph{Step 3.} To conclude the proof,
fix $\eps>0$ and choose $\delta$ in \eqref{eq:deltaerror} so that $C \delta<\eps$. 
By writing $(x_c(t),y_c(t),z_c(t))=(\Phi^A_{t,0}(\xx_0), \zeta_\delta(t))$ as in the previous step,
we define
\begin{align}
h=\dot{z}_c+\gamma (z_c - z_\star) + x_c(x_c+ \eta y_c).
\end{align}
It is then clear that $(x_c(t),y_c(t),z_c(t))$ is the unique solution to \eqref{e:LorenzControl} with initial
datum $(x_0,y_0,z_0)$. Moreover, $z_c(T)=\bar{z}$ and by \eqref{eq:deltaerror}  and our choice of $\delta$, 
the proof is over.
\end{proof}

\begin{remark}
A suitable modification of the above proof implies that the time $T$ can be made arbitrarily small thanks to the fact
that the eigenvalues of the $2\times2$ system can be completely controlled through the $z$ variable. However, we
will not need this enhancement for our purposes.
\end{remark}

\section{Construction of a Lyapunov function}
\label{sec:Lyapunov}

We now proceed with the analysis of the ``linearised'' model
\begin{equ}[e:ThetaZ]
\dot \theta = 1 - z \sin^2 (\theta)\;,\qquad
\dot{z}=- \gamma  (z-z_\star) + \alpha\,\xi\;,
\end{equ}
obtained from \eqref{e:equationR}-\eqref{e:defOU}. 
Here, we are taking advantage of the fact that $r$ does not appear in the right-hand side of \eqref{e:equationR}-\eqref{e:defOU}, and it is
therefore completely determined once $(\theta,z)$ are.
Note that by Propositions~\ref{prop:Hormander} and~\ref{prop:control},
\eqref{e:ThetaZ} is strong Feller and topologically irreducible. Since the
state space of $\theta$ is compact and $z$ is a simple Ornstein--Uhlenbeck process,
it admits Lyapunov functions with compact sublevel sets (the map $(\theta,z) \mapsto |z|^2$ will do) and therefore has a unique invariant measure $\mu_\alpha$.

The averaged behaviour of the right-hand side of the $r$ equation in \eqref{e:equationR} turns out to characterise the invariant measure of the 
full Lorenz system \eqref{e:LorenzNew},
as the following theorem shows.
\begin{theorem}\label{theo:Lyapunov}
Let $\mu_\alpha$ be the invariant measure for \eqref{e:ThetaZ}, and set
\begin{equ}
\lambda_\alpha \eqdef -1 + \frac12\int_{S^1 \times \R} z\sin(2\theta)\, \mu_\alpha(\dd\theta,\dd z)\;.
\end{equ} 
Then, the Lorenz system \eqref{e:LorenzNew} admits a nontrivial invariant measure if $\lambda_\alpha > 0$ and 
admits no such measure if $\lambda_\alpha < 0$.
\end{theorem}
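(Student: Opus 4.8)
Both statements quantify the linear (in)stability of the invariant plane $H$. Near $H$ one has $x^2+y^2 \asymp \e^{2r}$, the feedback term $x(x+\eta y)$ in the last line of~\eqref{e:LorenzNew} is $O(\e^{2r})$ and hence negligible, so that $(\theta,z)$ essentially follows the diffusion generated by $\CL_0$ while $r$ evolves by $\dot r = -1 + \tfrac12 z\sin(2\theta)$. Since $\int_{S^1\times\R}\bigl(-1+\tfrac12 z\sin(2\theta)\bigr)\,\mu_\alpha(\dd\theta,\dd z) = \lambda_\alpha$, the ergodic theorem suggests $r(t)/t \to \lambda_\alpha$ along trajectories that remain near $H$, so that $H$ should repel almost every nearby trajectory when $\lambda_\alpha>0$ and attract it when $\lambda_\alpha<0$. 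To turn this averaging heuristic into a pointwise statement, the first step is to solve the Poisson equation
\begin{equ}[e:Poisson]
\CL_0 \phi = \lambda_\alpha + 1 - \tfrac12 z\sin(2\theta)\;,
\end{equ}
on $S^1\times\R$: its right-hand side has vanishing $\mu_\alpha$-average by the very definition of $\lambda_\alpha$, so a solution is given by $\phi = \int_0^\infty P^0_t\bigl(\lambda_\alpha + 1 - \tfrac12 z\sin(2\theta)\bigr)\,\dd t$, where $P^0_t$ denotes the Markov semigroup of $\CL_0$. Convergence of this integral requires geometric ergodicity of $\CL_0$, which follows from the strong Feller property and irreducibility of Section~\ref{sec:HypoControl} together with the Lyapunov function $(\theta,z)\mapsto z^2$ (note that $z$ is an autonomous Ornstein--Uhlenbeck process and $\theta$ lives on a circle); feeding the same ingredients through~\eqref{e:Poisson} gives that $\phi$ is smooth and grows at most polynomially in $z$. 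The key algebraic fact is then that, since $\CL_1 r = -1+\tfrac12 z\sin(2\theta)$ and $\phi$ does not depend on $r$, one has $\CL_1\bigl(r+\phi(\theta,z)\bigr) = \lambda_\alpha$ exactly.

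Consider first $\lambda_\alpha>0$; this is the content of Section~\ref{sec:Lyapunov}. The plan is to construct $V\colon \R^3\setminus H \to [1,\infty)$ with $\CL V \le K - cV$ and with sublevel sets that are compact in $\R^3\setminus H$, i.e. $V\to\infty$ both as $x^2+y^2\to0$ and as $|(x,y,z)|\to\infty$. In a neighbourhood $\{r<R_1\}$ of $H$ one takes, schematically, $V \asymp \e^{-\kappa(r+\phi_M(\theta,z))}\,\Psi(z)$, where $\phi_M$ is $\phi$ truncated for large $|z|$ and $\Psi$ is an Ornstein--Uhlenbeck confining factor; writing $\CL = \CL_1 - x(x+\eta y)\partial_z$ and using~\eqref{e:Poisson} one finds, for $|z|$ bounded,
\begin{equ}
\CL V = \bigl(-\kappa\lambda_\alpha + O(\kappa^2) + O(\beta) + O(\e^{2r})\bigr)\,V \le -\tfrac12 \kappa\lambda_\alpha\, V
\end{equ}
once $\kappa,\beta$ are small enough and $R_1$ is sufficiently negative, while for large $|z|$ the factor $\Psi$ together with the Ornstein--Uhlenbeck drift $-\gamma(z-z_\star)$ makes $\CL V/V$ very negative and absorbs the bounded error produced by the truncation of $\phi$. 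This local function is patched, via a cutoff in $r$ (or a smoothed maximum), to a standard global Lyapunov function for~\eqref{e:LorenzNew} --- a suitable quadratic form in $(x,y,z)$ encoding the dissipativity of the Lorenz system, which is in any case needed for the existence of $\nu_0$. Applying Krylov--Bogoliubov to the diffusion on $\R^3\setminus H$ and using $\CL V\le K-cV$ to get tightness of the time averages \emph{within} $\R^3\setminus H$, one obtains an invariant probability measure $\mu_\star$ with $\int V\,\dd\mu_\star \le K/c < \infty$; since $V\equiv+\infty$ on $H$, this forces $\mu_\star(H)=0$, so $\mu_\star$ is the desired nontrivial measure (unique and ergodic by Theorem~\ref{theo:basic}). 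The lower bound $V \gtrsim (x^2+y^2)^{-\kappa}$ near $H$ moreover yields the moment bound $\int (x^2+y^2)^{-\kappa}\,\dd\nu_\star < \infty$ claimed in Theorem~\ref{theo:main}.

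When $\lambda_\alpha<0$ the same corrector produces instead a positive function $W \asymp \e^{\kappa(r+\phi(\theta,z))}$ on a neighbourhood $\{r<R_1\}$ of $H$ with $\CL W \le -c_1 W \le 0$ there (again modulo a careful treatment of large $|z|$) and with $W\to0$ as $x^2+y^2\to0$. Combining this supermartingale-type bound with the global Lyapunov function --- which guarantees non-explosion and recurrence of $(x,y,z)$ to a fixed compact set --- and with irreducibility, a ``ping-pong'' argument shows that $\lim_{t\to\infty}\bigl(x(t)^2+y(t)^2\bigr)=0$ almost surely from every initial condition in $\R^3\setminus H$: each excursion into $\{r<R_1\}$ has a probability bounded below of proceeding all the way to $H$ (by the maximal inequality for the supermartingale $W$, the probability of ever leaving $\{r<R_1\}$ again tends to $0$ the deeper one enters), whereas an excursion that fails returns to the compact set in finite time, and one concludes by Borel--Cantelli. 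Finally, if an ergodic invariant probability measure $\nu_\star$ were supported on $\R^3\setminus H$, then $\nu_\star(\{x^2+y^2>\eps\})>0$ for some $\eps>0$, and stationarity would force the process to re-enter $\{x^2+y^2>\eps\}$ at arbitrarily large times with probability at least $\nu_\star(\{x^2+y^2>\eps\})$, contradicting the almost sure convergence just established; hence no nontrivial invariant measure exists.

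The main obstacle is the construction and the quantitative analysis of the corrector $\phi$, together with the control of the Lyapunov functions at large $|z|$. The operator $\CL_0$ is only hypoelliptic, its drift in $z$ is linearly unbounded, and its drift $1-z\sin^2\theta$ in $\theta$ degenerates along a curve when $z>1$; establishing geometric ergodicity of $\mu_\alpha$ and polynomial-in-$z$ bounds on $\phi$ that are sharp enough to close $\CL V\le K-cV$ therefore requires genuine effort. The region of large $|z|$, where the corrector is of little help and one must rely purely on the Ornstein--Uhlenbeck confinement --- and, relatedly, the gluing of the local and global Lyapunov functions, complicated by the unboundedness in $z$ of the $r$-drift $-1+\tfrac12 z\sin(2\theta)$ which defeats crude cutoffs in $r$ --- is the other delicate point.
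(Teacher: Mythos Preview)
Your plan follows essentially the same route as the paper: solve the Poisson equation~\eqref{e:Poisson} for a corrector (called $g_\alpha$ there), build a Lyapunov function of the form $e^{-\kappa r}\times(\text{corrector})$ near $H$, combine it with a global exponential-of-quadratic Lyapunov function, and for $\lambda_\alpha<0$ run a renewal/ping-pong argument. A few technical choices differ, and the paper's are worth knowing because they dissolve precisely the two difficulties you flag at the end.

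For the corrector, the paper does not use the semigroup representation or geometric ergodicity. It conjugates $\CL_0$ by $e^{-\eps z^2/2}$ to land in an operator on $L^2(S^1\times\R)$ with compact resolvent (via the machinery of \cite{EH03}), solves by the Fredholm alternative, and bootstraps via commutators to $|g_\alpha|+|\partial_z g_\alpha|\le c_\alpha\, e^{\eps z^2/2}$ for every $\eps>0$ (Lemma~\ref{lem:bounds}). No polynomial bound is claimed or needed.

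More importantly, neither a truncation of the corrector nor a cutoff in $r$ is used. The paper takes the \emph{linearised} (in $\kappa$) version of your ansatz with a Gaussian weight added in,
\[
V_0 \;=\; e^{-\kappa r}\bigl(1 - \kappa\, g_\alpha(\theta,z) + \delta\, e^{\eps_\alpha z^2}\bigr)\;,
\]
so that for $\kappa$ small enough the term $\delta\, e^{\eps_\alpha z^2}$ dominates $\kappa g_\alpha$ everywhere, $V_0>0$, and a direct computation gives $\CL_1 V_0 \le -d(1+z^2)V_0$ \emph{globally} in $(\theta,z)$ (Proposition~\ref{prop:boundV0}). The full Lyapunov function is then simply the sum $V=V_0+V_1$, with $V_1$ the exponential of a quadratic in the original variables; the only cross term is $-x(x+\eta y)\,\partial_z V_0$, handled by splitting into $\{x^2+y^2\ \text{small}\}$, where it is $\lesssim (1+z^2)V_0$, and $\{x^2+y^2\ \text{large}\}$, where $V_0\lesssim \sqrt{V_1}$ so $V_1$ absorbs it. This sidesteps the gluing problem entirely.

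For $\lambda_\alpha<0$ the paper applies It\^o to $\log V_0$ rather than $V_0$, obtaining $\dd\log V_0\le -d(1+z^2)\,\dd t + \sqrt{\tilde c(1+z^2)}\,\dd W$ inside a tube around $H$, and concludes $\log V_0\to -\infty$ via a Dambis--Dubins--Schwarz time-change (Lemma~\ref{lem:divergence}) in place of your supermartingale maximal inequality; the subsequent renewal argument is the same as yours.
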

The proof of this theorem, which is given in Section~\ref{sec:actualProof} below, 
is based on the construction of a Lyapunov functional for the  Lorenz system \eqref{e:LorenzNew}
that blows up as $|x|^2+|y|^2\to 0$ (in the unstable case $\lambda_\alpha > 0$) and at infinity. 
This follows by a perturbative analysis from a similar analysis of the 
linearised system, which we carry out in the next section.

\subsection{Lyapunov functional for the linearised system}
To show the existence of a Lyapunov functional for the linearised system \eqref{e:ThetaZ} we need to analyse 
the regularity properties of the solution $g_\alpha$ to the problem
\begin{equ}[e:galphaPDE]
\CL_0 g_\alpha = \lambda_\alpha + 1 - {z\over 2} \sin(2\theta)\;,
\qquad \mu_\alpha(g_\alpha) = 0\;.
\end{equ}
Notice that $g_\alpha$ does not depend on the $r$ variable. The following result contains
all the properties needed later.

\begin{lemma}\label{lem:bounds}
For every $\eps > 0$, there exists a unique solution $g_\alpha$ to \eqref{e:galphaPDE} 
in $L^2(S^1 \times \R, \e^{-\eps z^2}d\theta\,dz)$.
Furthermore, $g_\alpha$ is smooth and there exists $c_\alpha>0$ (depending on $\eps$) such that
\begin{equ}[e:galphabounds]
|g_\alpha| + |\d_z g_\alpha| \le c_\alpha \e^{\eps z^2/2}\;, 
\end{equ}
holds uniformly over $(\theta, z)$.
\end{lemma}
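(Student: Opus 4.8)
\textbf{Proof proposal for Lemma~\ref{lem:bounds}.}

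The plan is to treat \eqref{e:galphaPDE} as a Poisson equation for the hypoelliptic operator $\CL_0$ on the cylinder $S^1\times\R$ and to exploit the fact that the $z$-marginal of the dynamics is a genuinely recurrent Ornstein--Uhlenbeck process, so that the resolvent is well-behaved on weighted $L^2$ spaces. First I would establish existence and uniqueness in $L^2(S^1\times\R,\e^{-\eps z^2}\,d\theta\,dz)$ by a soft argument: since $\mu_\alpha$ is the unique invariant measure of a strong Feller, topologically irreducible diffusion (by Propositions~\ref{prop:Hormander} and~\ref{prop:control}) and $\CL_0$ admits the Lyapunov function $z^2$ with compact sublevel sets, the semigroup $P_t^0$ generated by $\CL_0$ is exponentially mixing in a weighted total-variation sense. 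Consequently, for a bounded centred observable $f = \lambda_\alpha + 1 - \frac{z}{2}\sin(2\theta) - $ (which is NOT bounded because of the $z$ factor, so one actually works in a space with polynomial-in-$z$ weight first), the expression $g_\alpha = -\int_0^\infty P_t^0 f\,dt$ converges and solves the equation. The correct functional-analytic home is a space like $L^2(\mu_\alpha)$ or a Gaussian-weighted space adapted to the invariant measure; one checks $f\in L^2(\mu_\alpha)$ since $\mu_\alpha$ has Gaussian tails in $z$ (it is dominated by the OU invariant law $\CN(z_\star,\alpha^2/2\gamma)$ in the $z$-marginal), and that the spectral gap of $\CL_0$ on the orthogonal complement of constants gives both existence and uniqueness of the centred solution. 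Embedding $L^2(\mu_\alpha)$ into $L^2(\e^{-\eps z^2}d\theta\,dz)$ for $\eps$ small then yields the stated uniqueness class.

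Next I would upgrade to smoothness via hypoellipticity: $\CL_0$ satisfies Hörmander's condition on all of $S^1\times\R$ (the bracket of the drift with $\alpha\d_z$ produces a $\d_\theta$-component wherever $\sin^2\theta$ has nonzero derivative, and where it vanishes the drift itself has a $\d_\theta$-component since $1 - z\sin^2\theta = 1 \neq 0$ there), so Hörmander's theorem makes $g_\alpha\in\CC^\infty$ automatically, because the right-hand side of \eqref{e:galphaPDE} is smooth. The substantive part is the pointwise bound \eqref{e:galphabounds}, and this is where I expect the main obstacle to lie: one needs quantitative control of $g_\alpha$ and $\d_z g_\alpha$ that is uniform in $(\theta,z)$ with at most sub-Gaussian growth. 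My approach would be a probabilistic representation combined with a Lyapunov/comparison argument. Write $g_\alpha(\theta,z) = -\E_{(\theta,z)}\int_0^\infty f(\theta_t,z_t)\,dt$; since $|f(\theta_t,z_t)|\le C(1+|z_t|)$ and $z_t$ is an OU process started at $z$, one has $\E_{(\theta,z)}|z_t| \le |z|\e^{-\gamma t} + C$, so the time integral converges and is bounded by $C(1+|z|)$ — already this gives a polynomial, hence a fortiori sub-Gaussian, bound on $|g_\alpha|$, which is stronger than needed. For $\d_z g_\alpha$ one differentiates the representation: $\d_z g_\alpha = -\E\int_0^\infty \bigl(\d_\theta f\cdot \d_z\theta_t + \d_z f\cdot \d_z z_t\bigr)\,dt$, where $(\d_z\theta_t,\d_z z_t)$ solves the linearised (first variation) equation. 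Here $\d_z z_t = \e^{-\gamma t}$ decays, but $\d_z\theta_t$ obeys $\dot{u} = -z_t\sin(2\theta_t)\,u - \sin^2(\theta_t)\,\e^{-\gamma t}\cdot(\text{from }\d_z z_t)$, whose homogeneous part has the a priori only bound $|u_t|\le \exp(\int_0^t |z_s|\,ds)$, which would blow up. The resolution is to trade derivatives for time: use instead the smoothing estimate that for a hypoelliptic diffusion $\|\nabla P_t^0 f\|\lesssim t^{-N/2}\|f\|$ for small $t$ (with weights), split $\int_0^\infty = \int_0^1 + \int_1^\infty$, bound the short-time part by $\int_0^1 t^{-N/2}\,dt$ after noting this is integrable only if one first applies $P_{1}^0$ — more precisely, write $g_\alpha = -\int_0^1 P_t^0 f\,dt - P_1^0 g_\alpha$ and differentiate, so that $\d_z g_\alpha = -\int_0^1 \d_z P_t^0 f\,dt - \d_z P_1^0 g_\alpha$; the first term is controlled by the short-time gradient bound on $P_t^0$ applied to the (polynomially bounded) $f$, and the second by the gradient bound on $P_1^0$ applied to the (polynomially bounded) $g_\alpha$ just obtained. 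Carrying the Gaussian weight $\e^{\eps z^2/2}$ through these estimates — which requires the OU semigroup to map $\e^{-\eps z^2}$-weighted spaces into themselves for $\eps$ small enough relative to $\gamma$ and $\alpha$, a standard but somewhat delicate Gaussian computation — produces \eqref{e:galphabounds}.

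The one genuinely technical point I would flag as the crux is making the short-time hypoelliptic gradient estimates $\|\d_z P_t^0 f\|_{w} \lesssim t^{-k} \|f\|_{w}$ precise \emph{in the weighted norm} $w = \e^{-\eps z^2/2}$, because the weight interacts with the unbounded drift coefficient $z\sin^2\theta$ in the $\theta$-equation. I would handle this either by a Malliavin-calculus integration-by-parts formula (Bismut-type), controlling the inverse Malliavin matrix via the Hörmander brackets computed above and checking that all the resulting moments are sub-Gaussian in $z$ because of the OU structure, or alternatively by a direct Lyapunov-function argument for the two-point motion $(\theta_t, z_t, \d_z\theta_t, \d_z z_t)$, designing a weight that absorbs the $\exp(\int |z_s|ds)$ growth using the fact that $\int_0^t |z_s|\,ds$ has exponential moments for the OU process on bounded time intervals. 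Either route is routine in spirit but requires care; everything else in the lemma follows from the now-standard combination of a spectral gap for the invariant measure and Hörmander regularity.
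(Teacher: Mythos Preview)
Your overall strategy --- probabilistic representation plus hypoelliptic smoothing --- is a legitimate alternative route, but there is a concrete error in your pointwise bound on $|g_\alpha|$. You write that since $|f(\theta_t,z_t)|\le C(1+|z_t|)$ and $\E|z_t|\le |z|\e^{-\gamma t}+C$, ``the time integral converges and is bounded by $C(1+|z|)$''. But $\int_0^\infty C(1+|z|\e^{-\gamma t}+C)\,dt = \infty$: the constant term does not decay. The integral $\int_0^\infty P_t^0 f\,dt$ converges only because of the centring $\mu_\alpha(f)=0$, which forces $P_t^0 f(\theta,z)\to 0$; bounding $P_t^0|f|$ instead discards exactly this cancellation. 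To repair the argument you must invoke a weighted mixing estimate of the form $|P_t^0 f(\theta,z)|\lesssim V(\theta,z)\,\e^{-ct}$ for a Lyapunov function $V$ (say $V=1+z^2$), which does follow from Harris' theorem once the spectral gap you assert is actually established --- but that is not the computation you wrote, and it pushes the work back into the spectral-gap and weighted-smoothing machinery you already flag as the crux. (There is also a sign slip in your decomposition: $g_\alpha = -\int_0^1 P_t^0 f\,dt + P_1^0 g_\alpha$, not minus.)

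For comparison, the paper bypasses the semigroup entirely. It conjugates by $\e^{-\eps z^2/2}$, reducing to $K\phi=\Phi$ on flat $L^2(S^1\times\R)$; a single integration-by-parts identity gives $\|\d_z\psi\|^2+\|z\psi\|^2\lesssim\|K\psi\|^2+\|\psi\|^2$, so $K$ has compact resolvent by a result of Eckmann--Hairer. Fredholm's alternative then yields existence and uniqueness (one checks $\dim\ker K^*=1$ and that the centring condition is precisely the compatibility condition $\Phi\perp\ker K^*$). The pointwise bound \eqref{e:galphabounds} comes from commuting $\d_\theta,\d_z$ through $K$ --- the commutators are first-order with coefficients linear in $z$, so the coercivity bound bootstraps to $\phi\in H^3$ --- followed by the Sobolev embedding $H^3\subset W^{1,\infty}$. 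This handles the weight once, by the conjugation, and thereafter works entirely in unweighted Sobolev spaces, avoiding both the quantitative spectral gap and the weighted hypoelliptic gradient estimates that your approach would require.
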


\begin{proof}
Smoothness of $g_\alpha$ (once we know that it exists) follows from
Hörmander's theorem \cite{Hor67AM}, noting that 
Proposition~\ref{prop:Hormander} guarantees
that its assumptions are verified (Hörmander's condition for $\CL$ is equivalent to
that of $\CL_1$, which in turn implies it for $\CL_0$).
Note first that we can assume without loss of generality that $\eps$ is small enough since 
\eqref{e:galphabounds} then holds automatically for larger values of $\eps$.
Setting
\begin{equ}
G(\theta,z):= \lambda_\alpha + 1 - {z \over 2} \sin(2\theta)\;,\qquad 
\phi(\theta,z):= g_\alpha(\theta,z) \e^{-\eps z^2/2}\;,
\end{equ}
we see that \eqref{e:galphaPDE} is equivalent to the equation
\begin{equ}\label{e:Kequation}
K \phi = \Phi\;, \qquad \text{where}\quad  \Phi(\theta,z)= \e^{-\eps z^2/2} G(\theta,z)
\end{equ}
and
\begin{equ}
K = \CL_0 + \alpha^2\eps z\de_z - \frac12\left[\eps \left(2\gamma-\alpha^2\eps \right)z^2-2\gamma\eps z_\star z-\alpha^2\eps \right]\;.
\end{equ}
This operator belongs to the class $\mathcal{K}_0$ as defined in \cite[Def.~2.2]{EH03}, on the space $L^2(S^1\times\R)$. 
Setting
\begin{equ}
b(\theta,z) = 1- z \sin^2(\theta)\;,
\end{equ}
integration by parts shows that, for any $\psi \in \CC_0^\infty$,
\begin{equs}
-\int \psi\,K\psi&=\frac12\int \de_\theta b|\psi|^2+\frac{\alpha^2}{2}\|\de_z\psi\|^2-\frac{\gamma}{2}\|\psi\|^2+\frac{\eps}{2} \left(2\gamma-\alpha^2\eps \right)\|z \psi\|^2\\
&\quad -\gamma\eps z_\star\int z|\psi|^2\;.
\end{equs}
In particular, for any positive $\eps$ such  that
$\eps\leq \gamma / \alpha^2$,
and using the bound
$|\de_\theta b|\lesssim |z|$,
we infer by Cauchy--Schwarz that
\begin{equ}[e:niceBound]
\|\de_z\psi\|^2+\|z \psi\|^2 \lesssim \|K\psi\|^2+ \|\psi\|^2 \;,
\end{equ}
which can be extended to all $\psi \in \CD(K)$ by a simple approximation argument.
By \cite[Cor~4.2]{EH03}, it follows that $K$ has compact resolvent. 

Let now $\lambda > 0$ be in the resolvent set for $K$ (such a $\lambda$ exists since
the spectrum is discrete) and write $R_\lambda$ for the resolvent so that, since
$R_\lambda$ is a bijection between $L^2$ and $\CD(K)$, \eqref{e:Kequation} is equivalent to 
\begin{equ}
(\id +\lambda R_\lambda)\phi = R_\lambda \Phi\;.
\end{equ}
Note now that if $f \in \ker K^*$, then $\hat f := \e^{-\eps z^2/2} f$ satisfies
$\CL_0^* \hat f = 0$. Since $\hat f \in L^1$, we conclude as in the proof of \cite[Prop.~3.6]{EPR} that 
 $|\hat f|$ is the density of an invariant measure for \eqref{e:ThetaZ}. Since the latter
is unique and since $\e^{-\eps z^2/2} \in \ker K$ (so 
that $\dim \ker K = \dim \ker K^* \ge 1$), 
we conclude that $\ker K^*$ is one-dimensional and spanned by the element $f \in L^2$
such that 
\begin{equ}
\mu_\alpha(\dd u) = \e^{-\eps z^2/2}f(u)\,\dd u\;.
\end{equ}
Since $\mu_\alpha(G) = 0$ by definition of $\lambda_\alpha$ and therefore
$\scal{f, \Phi} = 0$ so that $\Phi \in (\ker K^*)^\perp$, we deduce  
 from Fredholm's alternative \cite[Thm~4.25]{Rudin} that \eqref{e:Kequation} admits a unique solution
 $\phi$ with the additional property that the corresponding 
function $g_\alpha$ is centred with respect to $\mu_\alpha$.

Note that it immediately follows from \eqref{e:niceBound} that
\begin{align}\label{eq:L2est}
\|\de_z\phi\|^2+\|(1+z) \phi\|^2 < \infty\;.
\end{align}
To derive similar bounds on higher derivatives, we simply take derivatives of 
\eqref{e:Kequation}, making sure that the
corresponding commutators are well-behaved. We have
\begin{align*}
[\de_\theta,K]&=\de_\theta b\,\de_\theta\;,\\
[\de_{\theta}^2,K]&=\left[\de_\theta,[\de_\theta,K]\right]+2[\de_\theta,K]\de_\theta=\de_{\theta}^2b\,\de_\theta +2\de_\theta b\,\de_\theta^2\;,\\
[\de_{\theta}^3,K]&=\left[\de_\theta,\left[\de_\theta,[\de_\theta,K]\right]\right]+ 3\left[\de_\theta,[\de_\theta,K]\right]\de_\theta+3[\de_\theta,K]\de_{\theta}^2\\
&=\de_{\theta}^3b\,\de_\theta+3\de_{\theta}^2b\,\de_\theta^2 +3\de_\theta b\,\de_{\theta}^3\;,
\end{align*}
and, noting
that $\de_z b$ is constant in $z$, also
\begin{align*}
[\de_z,K]&=\de_z b\, \de_{\theta}-\left(\gamma-\alpha^2\eps\right)\de_z-\eps \left(2\gamma-\alpha^2\eps \right)z+\gamma\eps z_\star\;,\\
[\de_{z}^2,K]
&=-\eps \left(2\gamma-\alpha^2\eps \right)+2[\de_z,K]\de_z\;.
\end{align*}
In light of \eqref{eq:L2est}, we have that $[\de_\theta,K]\phi$ is in $L^2$, hence \eqref{eq:L2est} holds also for $\de_\theta \phi$. Thus $ [\de_z,K]\phi\in L^2$ as well and the same
conclusion for $\de_z\phi$ follows. 
Proceeding iteratively, we deduce that $\phi \in H^3 (S^1\times \R)$, and the conclusion follows from the embedding $H^3 \subset W^{1,\infty}$. 
\end{proof}

The construction and properties of the Lyapunov functional for the full linearised system \eqref{e:equationR}-\eqref{e:defOU}
is contained in the following proposition.

\begin{proposition}\label{prop:boundV0}
Let
\begin{equ}\label{e:epsalpha}
\eps_\alpha =\frac{\gamma}{2\Gamma} \wedge {\beta \nu^2\sigma^3\chi^4 \over 16\alpha^2}\;,\qquad \Gamma = \alpha^2+2\gamma z_\star^2\;,
\end{equ}
and let $V_0$ be given by 
\begin{equ}\label{e:defV_0}
V_0(r,\theta,z) = \e^{-\kappa r} \left(1 - \kappa g_\alpha(\theta,z) + \delta \e^{\eps_\alpha z^2}\right)\;,
\end{equ}
for some constants $\kappa \in \R$ and $\delta > 0$. Then, for every $\alpha \in \R$ such that 
$\lambda_\alpha \neq 0$ there exists a choice of $\delta,\kappa$ with $\sgn \kappa = \sgn \lambda_\alpha$ such that 
\begin{equ}[e:boundV0]
 \e^{-\kappa r} \left(1 + \delta \e^{\eps_\alpha z^2}\right)
 \le 2V_0 \le 3 \e^{-\kappa r} \left(1 + \delta \e^{\eps_\alpha z^2}\right)\;,
\end{equ} 
and such that furthermore 
$
\CL_1 V_0 \le -d \left(1+z^2\right) V_0
$
for some constant $d>0$.
\end{proposition}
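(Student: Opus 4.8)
The plan is to verify the Lyapunov inequality by a direct computation of $\CL_1 V_0$ using the explicit form \eqref{e:defV_0} and the PDE \eqref{e:galphaPDE} satisfied by $g_\alpha$, then to absorb the various error terms by choosing $\delta$ small and $|\kappa|$ small. First I would record that since $r$ enters the generator $\CL_1$ only through the $\d_\theta, \d_z$ part (via the $(x,y)$-dynamics written in polar coordinates, $\dot\theta = 1 - z\sin^2\theta$ and $\dot r = -1 + \frac z2 \sin 2\theta$), we have, for a function of the form $\e^{-\kappa r} h(\theta,z)$,
\begin{equ}
\CL_1\bigl(\e^{-\kappa r} h\bigr) = \e^{-\kappa r}\Bigl( \CL_0 h - \kappa\bigl(-1 + \tfrac z2 \sin 2\theta\bigr) h \Bigr)\;,
\end{equ}
where $\CL_0$ is the generator in \eqref{e:genL0}. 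Applying this to each of the three terms $1$, $-\kappa g_\alpha$, $\delta \e^{\eps_\alpha z^2}$ in the bracket defining $V_0$ is the core computation.

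For the term $\e^{-\kappa r}(1 - \kappa g_\alpha)$: using $\CL_0 1 = 0$ and $\CL_0 g_\alpha = \lambda_\alpha + 1 - \frac z2 \sin 2\theta$ from \eqref{e:galphaPDE}, one finds
\begin{equ}
\CL_1\bigl(\e^{-\kappa r}(1 - \kappa g_\alpha)\bigr) = \e^{-\kappa r}\Bigl( -\kappa\lambda_\alpha - \kappa\bigl(-1+\tfrac z2\sin 2\theta\bigr) + \kappa^2\bigl(-1+\tfrac z2 \sin 2\theta\bigr) g_\alpha\Bigr)\;.
\end{equ}
The leading term is $-\kappa\lambda_\alpha \e^{-\kappa r}$, which is strictly negative since $\sgn\kappa = \sgn\lambda_\alpha$; this is the source of the negativity near $r\to+\infty$ (resp. $r\to-\infty$ when $\kappa<0$), i.e. as $|x|^2+|y|^2\to 0$ in the case $\lambda_\alpha>0$. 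The remaining terms are $O(|\kappa|)$ times something growing at most linearly in $|z|$ (using the bound $|g_\alpha| + |\d_z g_\alpha|\le c_\alpha \e^{\eps z^2/2}$ from Lemma~\ref{lem:bounds}, applied with $\eps$ small compared to $\eps_\alpha$), so they will be controlled by the negative term coming from the $\delta\e^{\eps_\alpha z^2}$ piece. For that last piece, a direct computation gives
\begin{equ}
\CL_0 \e^{\eps_\alpha z^2} = \e^{\eps_\alpha z^2}\Bigl( -2\gamma\eps_\alpha z(z-z_\star) + \alpha^2\eps_\alpha + 2\alpha^2\eps_\alpha^2 z^2\Bigr)\;,
\end{equ}
and the key point is that the definition $\eps_\alpha = \frac{\gamma}{2\Gamma}\wedge \frac{\beta\nu^2\sigma^3\chi^4}{16\alpha^2}$ with $\Gamma = \alpha^2 + 2\gamma z_\star^2$ is engineered precisely so that $-2\gamma\eps_\alpha z^2 + 2\alpha^2\eps_\alpha^2 z^2 \le -\gamma\eps_\alpha z^2$ (from $\eps_\alpha \le \gamma/(2\alpha^2)$, which follows from $\eps_\alpha\le\gamma/(2\Gamma)\le\gamma/(2\alpha^2)$) and so that the cross term $2\gamma\eps_\alpha z_\star z$ plus the constant $\alpha^2\eps_\alpha$ is dominated, yielding $\CL_0\e^{\eps_\alpha z^2}\le \e^{\eps_\alpha z^2}(c - \tfrac12\gamma\eps_\alpha z^2)$ for a suitable constant $c$; the second constraint $\eps_\alpha \le \frac{\beta\nu^2\sigma^3\chi^4}{16\alpha^2}$ is the one that will matter in the perturbative passage to the full system in Section~\ref{sec:actualProof}, but I would carry it along here. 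Combining, $\CL_1(\delta\e^{-\kappa r}\e^{\eps_\alpha z^2})\le \delta\e^{-\kappa r}\e^{\eps_\alpha z^2}(c' + |\kappa|\,|z| - \tfrac12\gamma\eps_\alpha z^2)$.

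Now I would assemble: $\CL_1 V_0 \le \e^{-\kappa r}\bigl( -\kappa\lambda_\alpha + |\kappa| P(|z|) + \kappa^2 Q(|z|)\e^{\eps z^2/2} + \delta\e^{\eps_\alpha z^2}(c' + |\kappa|\,|z| - \tfrac12\gamma\eps_\alpha z^2)\bigr)$ for polynomials $P,Q$. First fix $\delta$ small enough that $\delta$ times the worst linear-in-$|z|$ and constant terms above is beaten; more precisely, choose $\delta$ so that the term $-\tfrac12 \delta\gamma\eps_\alpha z^2 \e^{\eps_\alpha z^2}\e^{-\kappa r}$ dominates $|\kappa| P(|z|) \e^{-\kappa r}$ and $\kappa^2 Q(|z|)\e^{\eps z^2/2}\e^{-\kappa r}$ for $|z|$ large, and simultaneously pick $|\kappa|$ small enough (depending on $\delta$) that on the compact region of moderate $|z|$ the constant $-\kappa\lambda_\alpha<0$ dominates. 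This gives $\CL_1 V_0 \le -d(1+z^2)\e^{-\kappa r}(1+\delta\e^{\eps_\alpha z^2})$ for some $d>0$, and \eqref{e:boundV0} — which is the elementary two-sided bound $1 \le 1 - \kappa g_\alpha + \delta\e^{\eps_\alpha z^2} \cdot \tfrac{?}{}$... — actually follows by a separate, easier choice: since $g_\alpha$ grows only like $\e^{\eps z^2/2}$ with $\eps\ll\eps_\alpha$, for $\delta$ small we have $|\kappa g_\alpha|\le \tfrac12(1 + \delta\e^{\eps_\alpha z^2})$ pointwise once $|\kappa|$ is small, giving $\tfrac12(1+\delta\e^{\eps_\alpha z^2})\le 1-\kappa g_\alpha + \delta\e^{\eps_\alpha z^2}\le \tfrac32(1+\delta\e^{\eps_\alpha z^2})$, which is \eqref{e:boundV0}. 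I would present the two sub-choices ($\delta$ first, then $\kappa$) carefully so as to avoid circularity, and note that $\e^{-\kappa r}(1+\delta\e^{\eps_\alpha z^2})$ indeed has compact sublevel sets in $\R^3\setminus H$ (with $r\to-\infty$ corresponding to approaching $H$ when $\kappa<0$, resp. $r\to+\infty$ when $\kappa>0$).

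The main obstacle I anticipate is bookkeeping: keeping the dependence of $\delta$ and $\kappa$ on $\alpha$ (through $\lambda_\alpha$, $\eps_\alpha$, $c_\alpha$) transparent and, crucially, ordering the choices so that the quadratic-in-$z$ negativity from the $\e^{\eps_\alpha z^2}$ term genuinely absorbs \emph{both} the $g_\alpha$-derived errors (which carry an $\e^{\eps z^2/2}$ with $\eps < \eps_\alpha$, hence are subdominant — this is why Lemma~\ref{lem:bounds} is stated for \emph{every} $\eps>0$) \emph{and} the linear term from $\CL_0\e^{\eps_\alpha z^2}$ itself, while the leftover constant $-\kappa\lambda_\alpha$ handles the bounded-$z$ region. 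The precise algebra verifying that the stated $\eps_\alpha$ makes $\CL_0 \e^{\eps_\alpha z^2}$ have the right sign for large $|z|$ is the one genuinely computational point, but it is routine given the explicit formula above.
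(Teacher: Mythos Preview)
Your approach is essentially the paper's, but two bookkeeping points need fixing.

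First, a slip in your displayed computation of $\CL_1\bigl(\e^{-\kappa r}(1-\kappa g_\alpha)\bigr)$: the term $-\kappa(-1+\tfrac z2\sin 2\theta)$ should not appear. Since $\CL_0 g_\alpha = \lambda_\alpha + 1 - \tfrac z2\sin 2\theta = \lambda_\alpha - \CL_1 r$, the $\kappa\CL_1 r$ coming from $-\kappa\CL_0 g_\alpha$ cancels exactly against the $-\kappa\CL_1 r$ from $-\kappa(\CL_1 r)\cdot 1$, leaving only $-\kappa\lambda_\alpha + \kappa^2(\CL_1 r)g_\alpha$. This is not cosmetic: if your extra term were really there it would be of size $|\kappa|$ at $z=0$, hence of the same order as the leading $-\kappa\lambda_\alpha$, and would \emph{not} be absorbable when $|\lambda_\alpha|<1$.

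Second, and this is the circularity you anticipated but did not resolve: ``first $\delta$, then $|\kappa|$ small depending on $\delta$'' cannot work. After the correction above, the two binding constraints are (i) on bounded $|z|$ the constant error $\delta c'$ arising from $\CL_0\e^{\eps_\alpha z^2}$ must be beaten by $|\kappa||\lambda_\alpha|$, which forces $\delta\lesssim|\kappa|$; and (ii) for large $|z|$ the term $\kappa^2|\CL_1 r|\,|g_\alpha|$ must be beaten by $\delta\eps_\alpha\gamma z^2\e^{\eps_\alpha z^2}$, which forces $\kappa^2\lesssim\delta$. Neither parameter can be fixed first; they must be coupled. The paper sets $\delta=|\kappa|^{3/2}$ and then takes $|\kappa|$ small, which sits between the two constraints. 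It also streamlines the large-$|z|$ analysis by using the elementary inequality $\e^{u^2}(3-u^2)\le 8$ to write $\Gamma - \tfrac{\gamma}{2}z^2 \le 4\Gamma\e^{-\eps_\alpha z^2} - \tfrac14(2\Gamma+\gamma z^2)$, so that the constant and linear-in-$z$ pieces of $\CL_0\e^{\eps_\alpha z^2}$ are cleanly separated from the good $-z^2\e^{\eps_\alpha z^2}$ part. With that coupling in place, the rest of your outline (including the sandwich \eqref{e:boundV0} via $|\kappa g_\alpha|\le\tfrac12(1+\delta\e^{\eps_\alpha z^2})$) goes through as you describe.
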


\begin{proof}
From \eqref{e:galphaPDE} and the definition of the generators \eqref{e:genL1}--\eqref{e:genL0}, it follows that
\begin{equ}
\CL_0g_\alpha=\lambda_\alpha-\CL_1r.
\end{equ}
A simple calculation then shows that
\begin{equs}
\CL_1 V_0 &= \e^{-\kappa r} \Big[(-\kappa \CL_1 r)\left(1 - \kappa g_\alpha + \delta \e^{\eps_\alpha z^2}\right) - \kappa \CL_0 g_\alpha\\ 
&\quad + \delta \left(\alpha^2 \eps_\alpha +2\gamma\eps_\alpha z_\star z+ 2\eps_\alpha\left(\alpha^2 \eps_\alpha - \gamma\right)z^2\right)\e^{\eps_\alpha z^2}\Big] \\
&= \e^{-\kappa r} \Big[\kappa \left(\CL_1 r\right)\left( \kappa g_\alpha - \delta \e^{\eps_\alpha z^2}\right) - \kappa\lambda_\alpha\\ 
&\quad + \delta \left(\alpha^2 \eps_\alpha +2\gamma\eps_\alpha z_\star z+ 2\eps_\alpha\left(\alpha^2 \eps_\alpha - \gamma\right)z^2\right)\e^{\eps_\alpha z^2}\Big] \\
&\le \e^{-\kappa r} \left[\kappa \left(\CL_1 r\right)\left( \kappa g_\alpha - \delta \e^{\eps_\alpha z^2}\right) - \kappa\lambda_\alpha
+ \delta \eps_\alpha \left(\Gamma  - \frac{\gamma z^2}{2} \right)\e^{\eps_\alpha z^2}\right]\;,
\end{equs}
where we used the fact that 
$\eps_\alpha  \le {\gamma\over 2\alpha^2}$
in order to obtain the last inequality.
Note now that since the function $u \mapsto \e^{u^2}(3-u^2)$ is bounded from above by $8$, we deduce that 
\begin{equ}[e:boundGamma]
\Gamma  - \frac{\gamma z^2}{2} 
\le 4 \Gamma \e^{- \frac{\gamma z^2}{2\Gamma}} - {2\Gamma + \gamma z^2\over 4}
\le 4 \Gamma \e^{- \eps_\alpha z^2} - {2\Gamma + \gamma z^2\over 4}\;,
\end{equ}
so that
\begin{equs}
\CL_1 V_0 \le \e^{-\kappa r} 
&\Big[\kappa \left(\CL_1 r\right)\left( \kappa g_\alpha - \delta \e^{\eps_\alpha z^2}\right)
+ 4\delta \eps_\alpha \Gamma - \kappa \lambda_\alpha 
- {\delta \eps_\alpha \over 4} \left(2\Gamma + \gamma z^2\right)\e^{\eps_\alpha z^2}\Big]\;.
\end{equs}
We now use the fact that, by Lemma~\ref{lem:bounds}, there exists a constant $c_\alpha$ such that 
\begin{equ}
|g_\alpha| \le c_\alpha \e^{\eps_\alpha z^2}\;,
\end{equ}
so that, since  furthermore
\begin{equ}
|\CL_1 r| \le 2+z^2\;,
\end{equ}
we have
\begin{equs}
\CL_1 V_0 \le \e^{-\kappa r} 
&\Big[|\kappa|  \left(2+z^2\right)\left(|\kappa| c_\alpha + \delta\right) \e^{\eps_\alpha z^2}
+ 4\delta \eps_\alpha\Gamma - \kappa \lambda_\alpha \\
&\quad- {\delta \eps_\alpha \over 4} \left(2\Gamma + \gamma z^2\right)\e^{\eps_\alpha z^2}\Big]\;.
\end{equs}
We now make the choices 
\begin{equ}
\delta = |\kappa|^{3/2}\;, \qquad |\kappa| \le  
{\lambda_\alpha^2 \over 64\Gamma^2} \wedge 
{\eps_\alpha (\gamma\wedge\Gamma) \over 16} \wedge 
\left({\eps_\alpha (\gamma\wedge\Gamma) \over 16 c_\alpha}\right)^2 \;, 
\end{equ}
(as well as $\sgn \kappa = \sgn \lambda_\alpha$ as in the statement) so that we obtain the bound
\begin{equs}
\CL_1 V_0 
&\le \e^{-\kappa r} \left(- {\kappa \lambda_\alpha\over 2} 
- {\delta \eps_\alpha \over 8} \left(2\Gamma + \gamma z^2\right) \e^{\eps_\alpha z^2}\right)\\
&\le \e^{-\kappa r} \left(- {\kappa \lambda_\alpha\over 2} 
- {\delta \eps_\alpha \over 8} \left(2\alpha^2 + \gamma z^2\right) \e^{\eps_\alpha z^2}\right)\\
&\le  -\left(\frac{\kappa \lambda_\alpha}{2} \wedge  { \gamma \over 8}\right)   \e^{-\kappa r} \left( 1 
+ \delta \left(1+\eps_\alpha z^2\right) \e^{\eps_\alpha z^2}\right)\;.
\end{equs}
If we furthermore impose $|\kappa| \le 1/ c_\alpha^4$, then it follows from \eqref{e:galphabounds} that
\begin{equ}
|\kappa||g_\alpha| \le |\kappa|c_\alpha \e^{\eps_\alpha z^2/2} \le {1\over 2} \left(1 + \delta \e^{\eps_\alpha z^2}\right)\;,
\end{equ}
so that we do indeed have the bound \eqref{e:boundV0}. In particular, since $\delta\leq 1$ this implies
\begin{equ}
2\left(1+\delta\eps_\alpha z^2\right)V_0
\leq 3 \e^{-\kappa r} \left(1+ \delta\left(1+2\eps_\alpha z^2\right)\e^{\eps_\alpha z^2}\right)\;.
\end{equ}
This finally leads to the bound
\begin{equ}
\CL_1 V_0 \le -\frac16\left(\kappa \lambda_\alpha \wedge  { \gamma \over 4}\right) \left(1+\delta\eps_\alpha z^2\right)V_0\;,
\end{equ}
as required.
\end{proof}

\subsection{Lyapunov functional for the stochastic Lorenz system}
\label{sec:actualProof}

We also need the following standard result which
immediately follows from the Dambis--Dubins--Schwarz representation of continuous
martingales as a time-changed Brownian motion \cite[Thm~V.1.6]{RevuzYor}.

\begin{lemma}\label{lem:divergence}
Let $X_t  = X_0 + A_t + M_t$ be a continuous semimartingale with $A_0 = M_0 = 0$
such that there exists a constant $\kappa$ for which $A_t \le -\kappa \scal{M}_t$.
Then, provided that $\lim_{t \to \infty} \scal{M}_t = \infty$ almost surely, one has
$\lim_{t\to \infty} X_t = -\infty$ almost surely. \qed
\end{lemma}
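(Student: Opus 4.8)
The plan is to invoke the Dambis--Dubins--Schwarz theorem, reducing the statement to a standard fact about Brownian motion with drift. Since $M$ is a continuous local martingale with $M_0 = 0$ and, by assumption, $\scal{M}_\infty = \infty$ almost surely, \cite[Thm~V.1.6]{RevuzYor} provides a standard Brownian motion $B$ (with respect to the time-changed filtration; no enlargement of the probability space is needed, precisely because $\scal{M}_\infty = \infty$) such that $M_t = B_{\scal{M}_t}$ for all $t \ge 0$, almost surely. We may assume $\kappa > 0$: if $\kappa \le 0$ the hypothesis $A_t \le -\kappa \scal{M}_t$ is too weak for the conclusion to hold, but in every application of the lemma the positivity of $\kappa$ is part of the data (in the proof of Theorem~\ref{theo:Lyapunov} it comes from the strict sign of $\lambda_\alpha$).

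Combining the time-change representation with $A_t \le -\kappa \scal{M}_t$ yields the pathwise inequality
\begin{equ}
X_t \le X_0 - \kappa \scal{M}_t + B_{\scal{M}_t}\;,\qquad t \ge 0\;.
\end{equ}
The second ingredient is the law of large numbers for Brownian motion, $s^{-1}B_s \to 0$ almost surely as $s \to \infty$: given $C \in \R$, there is therefore an almost surely finite (random) time $S$ such that $B_s - \kappa s \le -\frac{\kappa}{2} s \le C$ for all $s \ge S$. Since $t \mapsto \scal{M}_t$ is continuous, nondecreasing, and tends to $+\infty$ almost surely, we have $\scal{M}_t \ge S$ for all $t$ large enough, and hence $X_t \le X_0 + C$ for all such $t$. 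As $C$ was arbitrary, this gives $\lim_{t\to\infty} X_t = -\infty$ almost surely, which is the claim.

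There is no genuine obstacle here; the only points deserving a line of care are that the hypothesis $\scal{M}_\infty = \infty$ is exactly what lets the Dambis--Dubins--Schwarz time change produce an honest Brownian motion on the original space, and that one must pass from convergence stated in the variable $s = \scal{M}_t$ back to convergence as $t \to \infty$, which is immediate from the monotonicity and continuity of $t \mapsto \scal{M}_t$ together with $\scal{M}_t \to \infty$.
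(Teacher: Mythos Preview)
Your proof is correct and follows exactly the route the paper indicates: the lemma is stated with a \qed and the sentence preceding it says it ``immediately follows from the Dambis--Dubins--Schwarz representation of continuous martingales as a time-changed Brownian motion \cite[Thm~V.1.6]{RevuzYor}'', which is precisely what you do. Your remark that $\kappa>0$ is implicitly assumed is also apt.
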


We are now ready to give the proof of Theorem~\ref{theo:Lyapunov}.

\begin{proof}[Proof of Theorem~\ref{theo:Lyapunov}]
We treat the cases $\lambda_\alpha > 0$ and $\lambda_\alpha < 0$ separately.
In the case $\lambda_\alpha > 0$, it suffices to find a function $V \colon \big(\R^2 \setminus \{(0,0)\}\big)\times\R \to \R_+$
with compact level sets and such that $\CL V \le K - c V$ for some positive constants $c$ and $K$.

For this, we first go back to the original formulation \eqref{e:Lorenz}, 
write $U = (X,Y,Z)$ and define the norm
\begin{equ}
|U|^2 =  X^2 + Y^2 + (Z-\sigma-\rho)^2\;.
\end{equ}
For any $\bar{c}>0$ to be fixed, we define the functional
\begin{equ}\label{eq:defV1tilde}
\widetilde{V}_1(X,Y,Z) = \exp\left(\bar{c} |U|^2\right).
\end{equ}
Applying the generator $\widetilde{\CL}$ of \eqref{e:Lorenz}, we see that $\widetilde{V}_1$ satisfies the identity
\begin{equ}
\widetilde{\CL} \widetilde{V}_1=2\bar{c} \widetilde{V}_1 \left(\frac{\hat{\alpha}^2}{2}-\sigma X^2-Y^2-\left(\beta-\bar{c}\hat{\alpha}^2\right) (Z-\sigma-\rho)^2-\beta(\sigma+\rho)(Z-\sigma-\rho)\right).
\end{equ}
Setting 
$\bar{c}= {\beta \over 2\hat{\alpha}^2}$,
we get
\begin{equs}
\widetilde{\CL}  \widetilde{V}_1&=2\bar{c} \widetilde{V}_1 \left(\frac{\hat{\alpha}^2}{2}-\sigma X^2-Y^2-\frac{\beta}{2} (Z-\sigma-\rho)^2-\beta(\sigma+\rho)(Z-\sigma-\rho)\right)\\
&\leq 2\bar{c} \widetilde{V}_1 \left(\frac{\hat{\alpha}^2}{2}+\beta(\sigma+\rho)^2-\sigma X^2-Y^2-\frac{\beta}{4} (Z-\sigma-\rho)^2\right),
\end{equs}
which implies that
\begin{equ}[e:boundtilV1]
\widetilde{\CL}  \widetilde{V}_1 
\le \widetilde{K} - d_{\hat{\alpha}}\left(1 + |U|^2\right)  \widetilde{V}_1\;,
\end{equ}
where $d_{\hat{\alpha}}\sim \hat{\alpha}^{-2}$ and $\widetilde{K}>0$ is independent of $\hat{\alpha}$. In the variables $(x,y,z)$ of system \eqref{e:LorenzNew}, it then follows from \eqref{e:boundtilV1} that
the functional
\begin{equ}\label{eq:V1def}
V_1(x,y,z)=\widetilde{V}_1(X,Y,Z)
\end{equ}
satisfies
\begin{equ}[e:boundV1]
\CL  V_1 
\le K - d_\alpha\left(1 + x^2+y^2+z^2\right)  V_1\;,
\end{equ}
for different constants $K$ and $d_\alpha$, explicitly computable from \eqref{e:newVar}.
The bound \eqref{e:boundV1} suggests the natural choice
\begin{equ}
V = V_0 + V_1\;,
\end{equ}
leading to the identity
\begin{equ}\label{e:ident}
\CL V = \CL_1 V_0 - x(x+\eta y) \d_z V_0 + \CL V_1\;.
\end{equ}
Combining \eqref{e:ident} with Proposition~\ref{prop:boundV0} and the bound \eqref{e:boundV1} yields the estimate
\begin{equ}
\CL V \le K - d_\alpha\left(1 + z^2\right) V - x(x+\eta y) \d_z V_0\;,
\end{equ}
for possibly different constants $d_\alpha$ and $K$.

We then note that, as a consequence of Lemma~\ref{lem:bounds}, there exists a constant $c$ such that 
\begin{equ}[e:boundDerV0]
|\d_z V_0| \le c \left(1+|z|\right) V_0\;.
\end{equ} 
It is then immediate that in the region $3 x^2+\eta^2y^2 \le d_\alpha/(2c)$, we have the bound
\begin{equ}
|x(x+\eta y) \d_z V_0| \le {d_\alpha \over 2}\left(1 + z^2\right) V\;.
\end{equ}
On the other hand, in the region $3 x^2+\eta^2 y^2 \ge d_\alpha/(2c)$, since $\lambda_\alpha>0$ we have that
\begin{equ}
V_0 \lesssim\e^{\eps_\alpha z^2} \lesssim\e^{2\eps_\alpha (z-z_\star)^2}.
\end{equ} 
Now, comparing the above upper bound with the definitions of $V_1$ in \eqref{eq:V1def} and 
of $\widetilde{V}_1$  in \eqref{eq:defV1tilde} via the change of variables \eqref{e:newVar}, it is not hard to see that if (compare with
\eqref{e:epsalpha})
\begin{equ}
\eps_\alpha\leq \frac{\bar{c}\chi^4\sigma^2}{4}= \frac{\beta \chi^4\sigma^2}{8\hat{\alpha}^2},
\end{equ}
then there holds
\begin{equation}
V_0 \lesssim  \sqrt{V_1}\;.
\end{equation} 
In particular, we can find $K>0$ such that 
\begin{equ}
|x(x+\eta y) \d_z V_0| \le K + {d_\alpha\over 2} V\;.
\end{equ}
Combining these bounds does indeed yield 
\begin{equ}
\CL V \le K - {d_\alpha \over 2} V \;,
\end{equ}
as required.

Regarding the case $\lambda_\alpha < 0$, it suffices to show that, for any realisation of the process,
$\lim_{t \to \infty} |x^2_t + y^2_t| = 0$ almost surely, which follows in particular
if we can show that $\lim_{t \to \infty} V_0(t) = 0$. (Note that in this
case $\kappa < 0$ in the definition of $V_0$!) By Proposition~\ref{prop:boundV0}, we have the bound
\begin{equ}
\CL \log V_0 = {\CL V_0 \over V_0} - {\alpha^2\over 2} \left({\d_z V_0 \over V_0}\right)^2
\le -d\left(1+z^2\right) + x(x+ \eta y){\d_z V_0 \over V_0}\;.
\end{equ}
Note furthermore that since $\kappa<0$, it follows by \eqref{e:boundV0} that $V_0$ is bounded below by~$1$. 
Hence, \eqref{e:boundDerV0}  entails
\begin{equ}
|\d_z \log V_0|^2 \le \tilde c(1+z^2)\;.
\end{equ}
It follows that there exists a constant $\delta$ such that, as long as $x^2_t + y^2_t \le \delta$,
one has
\begin{equ}
\dd\log V_0 \le -d\left(1+z^2\right)\, \dd t + \sqrt{\tilde c\left(1+z^2\right)}\,\dd W_t\;,
\end{equ}
for some Wiener process $W_t$. It then follows from Lemma~\ref{lem:divergence} that
there exists some $\eps_0 > 0$ such that, uniformly over all initial conditions with
$x^2_0 + y^2_0 = \delta/2$, one has $\lim_{t\to \infty} \log V_0(t) = -\infty$
and $\sup_{t > 0} \big(x^2_t + y^2_t\big) \le \delta$ with probability at least $\eps_0$.

It furthermore follows from Proposition~\ref{prop:control} and the fact that $V_1$ is a 
global Lyapunov function for our system that, for any fixed initial condition
with $x^2 + y^2 > \delta/2$, the stopping time $\tau= \inf\{t > 0\,:\, x^2 + y^2 = \delta/2\}$ is 
almost surely finite and admits exponential moments.
If we now split the trajectory into excursions between the cylinders
$\{x^2 + y^2 = \delta\}$ and $\{x^2 + y^2 = \delta/2\}$, a simple renewal argument
shows that one has indeed $\lim_{t\to \infty} \log V_0(t) = -\infty$ almost surely, as required.
\end{proof}

\section[Behaviour of the angular motion for small and large \texorpdfstring{$\alpha$}{alpha}]{Behaviour of the angular motion for small and large \texorpdfstring{$\boldsymbol{\alpha}$}{alpha}}
\label{sec:largealpha}

Recall that the system under consideration is given by
\begin{equ}[e:equationTheta]
\dot \theta = 1 - z \sin^2 (\theta)\;,\qquad
\dot{z}=- \gamma  (z-z_\star) + \alpha\,\xi\;,
\end{equ}
and write as before $\mu_\alpha$ for its invariant measure (we consider $\gamma$ and $z_\star$ as fixed). 
Then, one can write $\lambda_\alpha$ as
\begin{equ}[e:defLalpha]
\lambda_\alpha = -1 + {1\over 2}\int z\sin(2\theta)\,\mu_\alpha(\dd\theta, \dd z) \;.
\end{equ}

\begin{remark}
Since both \eqref{e:equationTheta} and \eqref{e:defLalpha} are invariant under $\theta \mapsto \theta + \pi$,
we can view $\theta$ as an element of the real projective line 
$\R\P^1 \equiv [-{\pi\over2}, {\pi\over2}] / \{{\pi\over2} , - {\pi\over2}\}$. 
For fixed $z > 1$, the equation for $\theta$ in \eqref{e:equationTheta} then admits 
 exactly two fixed points $\theta_\pm$ with
\begin{equ}[e:defFP]
\sin\theta_\pm = \pm{1\over \sqrt z} \;,\qquad \Big|\theta_\pm - {\pm 1\over \sqrt z}\Big|
\le {1 \over z^{3/2}}\;.
\end{equ}
The fixed point $\theta_+$ is stable, while $\theta_-$ is unstable with a saddle-node bifurcation at $z=1$
when $\theta_\pm = {\pi\over 2}$.
\end{remark}

In this section, we exhibit the precise asymptotic behaviour of $\lambda_\alpha$ for small and
for large values of $\alpha$. Since the map $\alpha \mapsto \lambda_\alpha$ is smooth, this
immediately yields Theorem~\ref{theo:main} when combined with Theorems~\ref{theo:basic} and~\ref{theo:Lyapunov}.
\begin{theorem}\label{theo:behaviourLyap}
One has
\begin{equ}\label{e:lambalph}
\lim_{\alpha \to \infty} \alpha^{-1/2} \lambda_\alpha = {\Gamma({3\over4})\over 2\gamma^{1\over 4}\pi^{1\over 2}}
 \;,\qquad 
\lim_{\alpha \to 0} \lambda_\alpha = 
\left\{\begin{array}{cl}
	\sqrt{z_\star-1}-1 & \text{if $z_\star > 1$,} \\
	-1 & \text{otherwise.}
\end{array}\right.\;.
\end{equ}
\end{theorem}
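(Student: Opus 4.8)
The plan is to treat the two limits by quite different arguments, both resting on the same two structural facts. First, the $z$-component of \eqref{e:equationTheta} is an autonomous Ornstein--Uhlenbeck process, so that the $z$-marginal of $\mu_\alpha$ is \emph{exactly} $\CN(z_\star,\alpha^2/(2\gamma))$ for every $\alpha$; I write $m_\alpha$ for this Gaussian and $\mu_\alpha(\dd\theta,\dd z)=\pi_\alpha(\dd\theta\,|\,z)\,m_\alpha(\dd z)$. Second, for frozen $z$ the flow $\dot\theta=1-z\sin^2\theta$ is explicitly solvable: for $z<1$ it rotates with invariant measure $\mu_z^\theta(\dd\theta)\propto(1-z\sin^2\theta)^{-1}\dd\theta$, while for $z\ge1$ every trajectory not started at the unstable point $\theta_-(z)$ converges to the stable fixed point $\theta_+(z)$ of \eqref{e:defFP}, so the relevant limiting object is $\delta_{\theta_+(z)}$. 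The core of both arguments is then to compare $\pi_\alpha(\cdot\,|\,z)$ with $\mu_z^\theta$ after integrating $z\sin(2\theta)$ against it: the time-scales of the fast variable $\theta$ and the slow variable $z$ separate because $z$ freezes (when $\alpha\to0$) or because $z$ becomes large so the $\theta$-flow is fast (when $\alpha\to\infty$).

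\emph{The limit $\alpha\to0$.} Here $m_\alpha\to\delta_{z_\star}$, and a continuity argument for the invariant measure of the slowly forced system (unique ergodicity of the frozen rotating flow when $z_\star\le1$, and the irreducibility from Proposition~\ref{prop:control} to exclude accumulation of mass at $\theta_-(z_\star)$ when $z_\star>1$) yields $\mu_\alpha\to\mu^\theta_{z_\star}\otimes\delta_{z_\star}$ weakly. Since $|\sin(2\theta)|\le1$ and $\E_{m_\alpha}|z-z_\star|\to0$, this gives $\lambda_\alpha\to-1+\tfrac12 z_\star\int\sin(2\theta)\,\mu^\theta_{z_\star}(\dd\theta)$. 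For $z_\star<1$ the antiderivative identity $\int_0^\pi\frac{z_\star\sin(2\theta)}{1-z_\star\sin^2\theta}\,\dd\theta=\bigl[-\ln(1-z_\star\sin^2\theta)\bigr]_0^\pi=0$ (equivalently, the $\theta\mapsto-\theta$ symmetry of $\mu^\theta_{z_\star}$) makes the integral vanish, so $\lambda_\alpha\to-1$. For $z_\star>1$, from \eqref{e:defFP} one has $\sin(2\theta_+)=2\sin\theta_+\cos\theta_+=2z_\star^{-1}\sqrt{z_\star-1}$, hence $\tfrac12 z_\star\sin(2\theta_+)=\sqrt{z_\star-1}$ and $\lambda_\alpha\to\sqrt{z_\star-1}-1$; the borderline $z_\star=1$ is handled by the same reasoning, the frozen flow being then uniquely ergodic with invariant measure $\delta_{\pi/2}$ at which $\sin(2\theta)=0$.

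\emph{The limit $\alpha\to\infty$: the main term.} Choose a slowly growing cutoff $M=M_\alpha$ (e.g.\ $M_\alpha=\alpha^{1/3}$) and split $\int z\sin(2\theta)\,\mu_\alpha=I_{<}+I_{\mathrm{mid}}+I_{>}$ over $\{z\le1\}$, $\{1<z\le M\}$, $\{z>M\}$. Since $m_\alpha$ has density $O(1/\alpha)$ on any fixed compact set, $|I_{\mathrm{mid}}|\le M\,m_\alpha(\{1<z\le M\})=O(M^2/\alpha)=o(\alpha^{1/2})$. On $\{z\le1\}$ the frozen occupation measure satisfies $\int_0^\pi\frac{z\sin(2\theta)}{1-z\sin^2\theta}\,\dd\theta=\bigl[-\ln(1-z\sin^2\theta)\bigr]_0^\pi=0$, so, together with the averaging estimate below, $I_{<}=o(\alpha^{1/2})$. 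On $\{z>M\}$ the frozen $\theta$-flow contracts to $\theta_+(z)$ at rate $\asymp\sqrt{z-1}\ge\sqrt{M-1}\to\infty$ (hence $\asymp\sqrt z$ in the bulk $z\asymp\alpha$, dominating the $O(\gamma)$ rate of the $z$-motion), and the averaging estimate gives $I_{>}=\int_{z>M}2\sqrt{z-1}\,m_\alpha(\dd z)+o(\alpha^{1/2})$, using $\tfrac12 z\sin(2\theta_+(z))=\sqrt{z-1}$. Since $\int_{1<z\le M}\sqrt{z-1}\,m_\alpha(\dd z)=O(M^{3/2}/\alpha)=o(\alpha^{1/2})$, altogether $\tfrac12\int z\sin(2\theta)\,\mu_\alpha=\int\sqrt{(z-1)_+}\,m_\alpha(\dd z)+o(\alpha^{1/2})$. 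Substituting $z=1+\sigma_\alpha w$ with $\sigma_\alpha^2=\alpha^2/(2\gamma)$ and using dominated convergence together with $\int_0^\infty\sqrt w\,\e^{-w^2/2}\,\dd w=2^{-1/4}\Gamma(3/4)$ gives
\begin{equ}
\int_{\R}\sqrt{(z-1)_+}\,m_\alpha(\dd z)=\sqrt{\sigma_\alpha}\,\frac{2^{-1/4}\Gamma(3/4)}{\sqrt{2\pi}}\bigl(1+o(1)\bigr)=\alpha^{1/2}\,\frac{\Gamma(3/4)}{2\gamma^{1/4}\pi^{1/2}}\bigl(1+o(1)\bigr)\;,
\end{equ}
and since $\lambda_\alpha=-1+\tfrac12\int z\sin(2\theta)\,\mu_\alpha$ with $-1=o(\alpha^{1/2})$, the first limit in \eqref{e:lambalph} follows. (Note that this is exactly the prediction of the heuristic of Remark~\ref{rem:heuristics}, since there $\tfrac12 z\sin(2\theta_+(z))=\sqrt{z-1}=\lambda_+(z)+1$; the corrections that make that heuristic fail at finite $\alpha$ are genuinely of lower order.)

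\emph{Main obstacle.} Everything above rests on the quantitative averaging claim: that replacing $\pi_\alpha(\cdot\,|\,z)$ by $\mu_z^\theta$ inside $\int z\sin(2\theta)\,(\cdot)$ costs only $o(\alpha^{1/2})$ after integration against $m_\alpha$. The natural route is a corrector argument in the fast variable --- for frozen $z\notin\{1\}$ solve $(1-z\sin^2\theta)\,\d_\theta g_z(\theta)=z\sin(2\theta)-\int z\sin(2\theta)\,\mu_z^\theta$, apply Itô's formula to $g_{z_t}(\theta_t)$ along \eqref{e:equationTheta}, and estimate the error terms ($\d_z g_z$ against the drift and noise of $z$, i.e.\ the ``lag'' of $\theta$ behind the moving target $\theta_+(z_t)$) using moment bounds on the OU process. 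Three points make this delicate: (i) $g_z$ is singular at $\theta_-(z)$, so one needs an a priori estimate that the process spends negligible time near $\theta_-$ (using instability of $\theta_-$ together with irreducibility); (ii) a linearisation about $\theta_+(z_t)$ shows the lag contributes $O(1)$ from the drift of $z$ and $O(\alpha^{1/4})$ from its noise --- both $o(\alpha^{1/2})$ --- but this linearisation degenerates as $z\downarrow1$; (iii) reconciling the quasi-static picture with the saddle-node slowdown at $z=1$. Although $\{|z-1|\le M_\alpha\}$ carries $m_\alpha$-mass only $O(M_\alpha/\alpha)$, showing that the process cannot ``get stuck'' there with an $O(1)$ misalignment of $\theta$ (and thereby contaminate the estimate by more than $o(\alpha^{1/2})$) is the genuinely hard part; I would control it through an excursion/renewal decomposition of the $z$-path, using the exponential integrability of the return time of $z$ to a large compact set and the global Lyapunov bound \eqref{e:boundV1}.
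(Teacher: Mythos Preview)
Your proposal correctly identifies the limiting expressions and the heuristic mechanism, but it is not a proof: the hard quantitative steps are explicitly left as obstacles. Let me compare with what the paper actually does and point out where the gaps lie.

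For $\alpha\to 0$ with $z_\star>1$, your weak-convergence argument is incomplete. The frozen system $\dot\theta=1-z_\star\sin^2\theta$ has \emph{two} invariant measures, $\delta_{\theta_+}$ and $\delta_{\theta_-}$, and weak limits of $\mu_\alpha$ could a priori be any convex combination. Invoking irreducibility of the $\alpha>0$ system does not by itself rule out mass at $\theta_-$ in the limit; you need a quantitative instability estimate showing the process escapes any neighbourhood of $\theta_-$ in time $o(\alpha^{-2})$ with high probability. The paper does exactly this via Lemma~\ref{lem:expGrowth}: a linearisation around $\theta_-$, together with a change of variable $\hat\theta=\tilde\theta-c(z-z_\star)$ designed to kill the dominant $O(\alpha^{3/4})$ drift term so that the noise-driven escape mechanism becomes visible.

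For $\alpha\to\infty$, your corrector approach is in principle viable, but---as you yourself flag---the corrector $g_z$ blows up at $\theta_-(z)$ and degenerates as $z\downarrow 1$, and you would need tight control on the time spent near $\theta_-$ and on the crossing of the saddle-node region. You end by proposing an ``excursion/renewal decomposition'' to handle this, which is precisely what the paper does from the outset, abandoning the conditional-measure viewpoint entirely. It decomposes the $z$-trajectory into excursions between dyadic levels (the stopping times $\tau_n$ and regions $\CZ(z)$ of \eqref{e:deftaun}--\eqref{e:defZz}), lifts to a path-space Markov chain (Lemmas~\ref{lem:propertiesP} and~\ref{lem:idenAverages}), and then proves the pathwise bound \eqref{e:realBound} by a case analysis in $z_0$. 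The cases are genuinely different in character: for large negative $z_0$ the $\theta$-dynamics spins rapidly and one exploits cancellation over each full revolution by pairing symmetric intervals $D_n^\pm$ via Lemma~\ref{lem:boundDiff}; for large positive $z_0$ one shows $\theta$ tracks $\theta_+(z(t))$ after a short transient, again invoking Lemma~\ref{lem:expGrowth} to control the escape from $\theta_-$; the intermediate-$|z_0|$ regimes are absorbed into the comparison function $G_\alpha$ by cruder bounds.

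In short: your outline is sound and your diagnosis of the obstacles is accurate, but the paper's proof is organised around the excursion decomposition you only gesture at in your last sentence, and the bulk of the work---several pages of case-by-case stopping-time estimates---is precisely what you have not supplied.
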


\begin{remark}
Our proof actually shows that 
\begin{equ}[e:largeAlpha]
\biggl|\lambda_\alpha - {\alpha^{1\over2} \Gamma({3\over4})\over 2\gamma^{1\over 4}\pi^{1\over 2}}\biggr|
\lesssim \alpha^{{1\over 3} + \kappa}\;,\qquad \text{as $\alpha \to \infty$,}
\end{equ}
(for any $\kappa > 0$) while we have
\begin{equ}[e:smallAlpha]
|\lambda_\alpha - \lambda_0| \lesssim 
\left\{\begin{array}{cl}
	\alpha^{1/4} & \text{if $z_\star = 1$,} \\
	\alpha^{3/4} & \text{otherwise.}
\end{array}\right.
\end{equ}
Heuristics suggest that \eqref{e:smallAlpha} should hold with 
exponents $1$ and $2$ respectively, which we expect to be optimal. It is however not clear to us what the 
optimal exponent in \eqref{e:largeAlpha} should be.
\end{remark}

\begin{proof}
We start with the case $\alpha \to \infty$ since this is the harder one. Setting
\begin{equ}[e:defFF]
F(\theta,z) = -1 + {z\over 2} \sin(2\theta)\;,\qquad 
F_\infty(\theta,z) = \sqrt{z}\one_{z > 0}\;,
\end{equ}
the claim follows if we can show that 
\begin{equ}[e:wantedBoundDiff]
\lim_{\alpha \to \infty} \alpha^{-1/2}\Big|\int \big(F-F_\infty\big)\,\dd \mu_\alpha\Big| = 0\;,
\end{equ}
where $ \mu_\alpha$ denotes the invariant measure for the system \eqref{e:equationTheta}.
This is because $z \sim \CN(z_\star, \alpha^2/(2\gamma))$ under $\mu_\alpha$ and, 
for $X \sim \CN(0,1)$, one has 
\begin{equ}
\E \big(\one_{X > 0}\sqrt X\big)
= {1\over \sqrt{2\pi}}\int_0^\infty \e^{-{X^2\over2}} \sqrt X\, \dd X
= {1\over 2^{3\over 4}\pi^{1\over 2}}\int_0^\infty \e^{-t}t^{-{1\over4}}\, \dd t\;.
\end{equ}
In order to show that \eqref{e:wantedBoundDiff} holds, we consider the process given by \eqref{e:equationTheta} and
we define the following increasing sequence of stopping times. We set $\tau_0= 0$
and then inductively
\begin{equ}[e:deftaun]
\tau_{n+1} = \inf\{t > \tau_n \,:\, z(t) \not\in \CZ(z(\tau_n))\}\;,
\end{equ}
where the regions $\CZ(z) \subset \R$ are defined by
\begin{equ}[e:defZz]
\CZ(z) = 
\left\{\begin{array}{cl}
	[-1,1] & \text{if $|z| \le 1/2$,} \\
	\{c z\,:\, c \in [1/2,2]\} & \text{otherwise.}
\end{array}\right.
\end{equ}
Our definitions guarantee that $|z(\tau_{n+1})-z(\tau_{n})| \ge 1/4$, so that 
since $z$ is a simple Ornstein-Uhlenbeck process (and therefore has continuous sample paths), 
the stopping times $\tau_n$ necessarily increase to $+\infty$.
Using the explicit representation of the Ornstein--Uhlenbeck process $z(t)$, it
is furthermore straightforward to see that for every $p \ge 1$
there exists a constant $C$ such that the bound
\begin{equ}[e:bound excursionTime]
\E \big((\tau_{n+1} - \tau_n)^p\,|\, \CF_{\tau_n}\big) \le C\;,
\end{equ} 
holds almost surely, uniformly over $n$ (see Lemma~\ref{lem:stopTime} below
for a much more precise upper and lower bound). 

Writing $\CX = \R\P^1 \times \R$ for the state space of the Markov process \eqref{e:equationTheta}, we
define a space of excursions $\hat \CX = \R \times \CC(\R_+ , \CX) / \sim$, where
we set
\begin{equ}
(\tau,u) \sim (\bar \tau,\bar u) \quad\text{iff}\quad \bar \tau = \tau \quad\text{and}\quad u(t) = \bar u(t)\; \forall t \le \tau\;,
\end{equ}
and we define a sequence of excursions $E_n \in \hat \CX$ by
\begin{equ}[e:defEn]
E_n = (\tau_{n+1}-\tau_n, u(\tau_n + \bigcdot))\;,
\end{equ}
where this time $u(\bigcdot) = (\theta(\bigcdot),z(\bigcdot))$ is the Markov process defined by
\eqref{e:equationTheta}.

Given any continuous function $F \colon \CX \to \R$, we can lift it to a function $\hat F \colon \hat \CX \to \R$ by
\begin{equ}[e:defFhat]
\hat F(\tau,u) = \int_0^\tau F(u(s))\,\dd s\;.
\end{equ}
Given an excursion $E$, we also write $\tau(E)$ for the time $\tau$ such that $E = (\tau,u)$. 
Note that the sequence $E_n$ given by \eqref{e:defEn} is Markovian as a consequence of the
Markov property of $u$.
Writing $\hat \mu_\alpha$ for its invariant measure 
on $\hat \CX$ (whose existence is shown in Lemma~\ref{lem:propertiesP} below and whose uniqueness
will not be used), 
it is then shown in Lemma~\ref{lem:idenAverages} that, for every 
$F\colon \CX \to \R$ with at most polynomial growth in the $z$ direction, one has
 $\hat F \in L^1(\hat \mu_\alpha)$ and
\begin{equ}
\int F(u)\,\mu_\alpha(\dd u) = \bar T_\alpha^{-1} \int_\CX \hat F(E)\,\hat \mu_\alpha(\dd E) \;,
\end{equ}
where $\bar T_\alpha = \int \tau(E)\,\hat \mu_\alpha(\dd E)$.
Write now $\hat P$ for the transition probabilities of the Markov chain $E_n$ on $\hat \CX$
and assume that we can find a function $G_\alpha \colon \CX \to \R_+$ such that, uniformly over $E$
and over $\alpha \ge 1$,
\begin{equ}[e:wanted]
\Big|\int \big(\hat F_\infty(E') - \hat F(E')\big)\,\hat P(E, \dd E')\Big| \le \int \hat G_\alpha(E')\,\hat P(E, \dd E')\;,
\end{equ}
with $F$ and $F_\infty$ as in \eqref{e:defFF}. This then immediately implies that 
\begin{equs}
\Big|\int \big(F_\infty - F\big)\,\dd \mu_\alpha\Big| &= \bar T_\alpha^{-1}\Big|\int_\CX \big(\hat F_\infty(E)-\hat F(E)\big)\,\hat \mu_\alpha(\dd E)\Big|  \\
&= \bar T_\alpha^{-1}\Big|\int_\CX\int_\CX \big(\hat F_\infty(E')-\hat F(E')\big)\,\hat P(E,\dd E')\,\hat \mu_\alpha(\dd E)\Big| \\
&\le \bar T_\alpha^{-1}\int_\CX\int_\CX \hat G_\alpha(E')\,\hat P(E,\dd E')\,\hat \mu_\alpha(\dd E)
= \int G_\alpha\,\dd \mu_\alpha\;.
\end{equs}
If we can choose $G_\alpha$ in such a way that furthermore 
\begin{equ}[e:wantedPropG]
\lim_{\alpha \to \infty} {1\over \sqrt\alpha}\int G_\alpha \,\dd \mu_\alpha = 0\;,
\end{equ}
then our claim follows.

We claim that for every $\kappa > 0$ one can find a constant $C > 0$ such that, setting
\begin{equs}
G_\alpha(z) &= C \alpha^\kappa\bigl(H_\alpha(z) + \alpha^{-2/3}|z|\bigr)\;, \\
H_\alpha(z) &= 
\left\{\begin{array}{cl}
	(1 \vee |z|) & \text{if $|z| \le \alpha^{2/3}$,} \\
	\alpha |z|^{-3/4} + \alpha^{1/3} + |z|^{1/3}(|z|/\alpha)^{16} & \text{if $|z| \ge \alpha^{4/5}$,} \\
	\alpha^{8/15} + \alpha^{2} |z|^{-2} & \text{otherwise,}
\end{array}\right.
\end{equs}
the bounds \eqref{e:wanted} and \eqref{e:wantedPropG} are satisfied.
Since a simple calculation shows that $\int G_\alpha \,d\mu_\alpha \lesssim \alpha^{\kappa + 1/3}$,
it only remains to show \eqref{e:wanted}.

Note that \eqref{e:wanted} holds provided we can show that, setting
$\tau = \inf\{t > 0\,:\, z(t) \not \in \CZ(z_0)\}$, one has the bound
\begin{equ}[e:realBound]
\E \Big|\int_0^\tau \big(F_\infty(u(t)) - F(u(t))\big)\,\dd t\Big| \le 
\underline G_\alpha(z_0) \,\E \tau \;,
\end{equ}
where we write $u = (\theta, z)$ and set
\begin{equ}[e:defGalphab]
\underline G_\alpha(z) \eqdef \inf_{z' \in \CZ(z)} G_\alpha(z')\;.
\end{equ}
In fact, since $G_\alpha$ has the property that 
$ G_\alpha(z) \lesssim \underline G_\alpha(z)$ uniformly over all $z$, \eqref{e:realBound}
is implied by the same bound with $\underline G_\alpha$ replaced by $G_\alpha$.
It remains to show 
that such a bound is indeed satisfied  for all initial conditions 
$(\theta_0,z_0)$. In order to show this, we consider a number of different regimes separately. 

\medskip\noindent\textbf{The case $\boldsymbol{|z_0| \le \alpha^{2/3}}$.}
This case is trivial since, provided that $C$ is large enough, one has $|F_\infty|+|F| \le G_\alpha$ in this region.

\medskip\noindent\textbf{The case $\boldsymbol{z_0 \le -\alpha^{4/5}}$.} 
We first note that by Fernique's theorem, combined with the upper bound in Lemma~\ref{lem:stopTime}
and Corollary~\ref{cor:negligible}, we can restrict ourselves to the event
\begin{equ}[e:goodSet]
\tau \le \alpha^{\delta} (z_0/\alpha)^2\;,\qquad \|W\|_{{1\over 2}-\delta} \le \alpha^\delta\;,
\end{equ}
where $\delta > 0$ is any (fixed, small) exponent and $\|\bigcdot\|_\alpha$ denotes the
$\alpha$-Hölder seminorm. This is because the event on which \eqref{e:goodSet} fails
has probability bounded by $C\alpha^{-p}$ for any $p$ (and a fortiori by $\alpha^{-2/3}$).

Let $N > 0$ be such that $\sqrt{|z_0|} 2^{-N} \in (1/2,1]$ and, for 
$n \in \{0,\ldots,N-1\}$, write $D_n \subset \R\P^1$ for the region defined by 
\begin{equ}
D_n = \{\theta \,:\, |\sin(\theta)| \in [2^{-n-1}, 2^{-n}]\}\;.
\end{equ}
We also set
\begin{equ}
D_N = \{\theta \,:\, |\sin(\theta)| \le 2^{-N}\}\;.
\end{equ}
Note that one has $|D_n| \approx 2^{-n}$ in the sense that the ratio between these quantities
is bounded from above and below by strictly positive constants. 
Note that for $n \in \{1,\ldots,N-1\}$, $D_n = D_n^+ \cup D_n^-$ consists of a pair of 
intervals placed symmetrically around
$\theta = 0$, while $D_0$ is a single interval centred around ${\pi\over 2}$ and $D_N$ is a 
single interval centred around $0$.

In this regime, the system spins around $\R\P^1$ many times: on each full spin, we exploit cancellations on every
pair $\{D_n^+, D_n^-\}$, via Lemma~\ref{lem:boundDiff}; however, depending on the starting point and the point at which the system
lies at the stopping time $\tau$, some intervals cannot be paired and contribute of a remainder bounded by at most two full spins.
This contribution will be taken care of at the end of the proof, in \eqref{e:lastCont}.

Using Lemma~\ref{lem:boundDiff}, we now have everything in place to bound the contribution 
in this regime
(here, `of order' means bounded above and below by a fixed multiple independent of $z_0$, $\alpha$, $n$).
Before time $\tau$, the right hand side 
\begin{equ}
F_n(\theta, t) = 1- z(t)\sin^2(\theta)
\end{equ}
for \eqref{e:equationTheta} in $D_n$ is of order $2^{-2n} |z_0|$ while
the integrand 
\begin{equ}
G_n(\theta,t) = -1 + {z(t)\over 2} \sin(2\theta)
\end{equ}
is at most of order $2^{-n}|z_0|$. (Recall that $F_\infty = 0$ in this regime and that 
$2^{-n}|z_0| \ge 1/4$ by our choice of $N$.) As a consequence, the time $t_n$ it takes to
cross one of the intervals is of order $t_n \sim 2^{-n} / F_n \sim 2^n/ |z_0|$.
(This also includes the intervals $D_0$ and $D_N$.) Summing up these bounds shows that the time it takes
to go around $\R\P^1$ once is of order $|z_0|^{-1/2}$.

We now apply Lemma~\ref{lem:boundDiff} for each of the pairs of intervals $\{D_n^+, D_n^-\}$,
except that that on $D_n^-$ we replace $\theta$ by $-\theta$ and run time backwards. 
We then have right hand sides $F_n$, $F_n'$ and integrands $G_n$, $G_n'$ with, as a consequence
of \eqref{e:goodSet},
\begin{equ}
d(F_n, F_n') \lesssim 2^{-2n} \alpha^{1+\delta} |z_0|^{-{1\over 2}({1\over2}-\delta)}\;,\quad
d(G_n, G_n') \lesssim 2^{-n} \alpha^{1+\delta} |z_0|^{-{1\over 2}({1\over2}-\delta)}\;.
\end{equ}
The total contribution over the two intervals is therefore bounded by 
\begin{equ}
2^{-n} \Bigl({G_n d(F_n, F_n') \over F_n^2} + {d(G_n, G_n') \over F_n}\Bigr)  
\lesssim 
\alpha^{1+\delta} |z_0|^{-1-{1\over 2}({1\over2}-\delta)}\;.
\end{equ}
Summing this and using the fact that $N \sim \log |z_0|$, we get a total contribution per round of 
\begin{equ}
\alpha^{1+\delta} |z_0|^{-1-{1\over 2}({1\over2}-\delta)}\log |z_0| 
\lesssim \alpha^{1+\delta} |z_0|^{\delta -{5\over 4}}\;.
\end{equ}
By \eqref{e:goodSet}, the total number of rounds in this regime is bounded by 
$\sqrt{|z_0|} (|z_0|/\alpha)^2 \alpha^\delta$, so  that the from the above bound we obtain a
total contribution of 
\begin{equ}\label{eq:totalcontr}
(|z_0|/\alpha)^2\alpha^{1+2\delta} |z_0|^{\delta -{3\over 4}}.
\end{equ}
We now distinguish two cases: 
if $|z_0|\leq \alpha$, then from Lemma~\ref{lem:stopTime} the total contribution is at most of order
\begin{equ}
(|z_0|/\alpha)^2  \alpha^{1+2\delta} |z_0|^{\delta -{3\over 4}}\lesssim \alpha^{2\delta+1}|z_0|^{\delta - {3\over 4}} \E \tau \lesssim
\alpha^{3\delta} \alpha |z_0|^{ - {3\over 4}} \E \tau  \;,
\end{equ}
which is consistent with \eqref{e:realBound} provided we choose $3\delta<\kappa$. Otherwise, if $|z_0|> \alpha$,  Lemma~\ref{lem:stopTime} implies that $1\lesssim \E\tau$. As a consequence, from \eqref{eq:totalcontr}
the total contribution is
\begin{align*}
(|z_0|/\alpha)^2\alpha^{1+2\delta} |z_0|^{\delta -{3\over 4}}
&\lesssim |z_0|^{1/3}(|z_0|/\alpha)^{\delta+11/12}\alpha^{\delta-1/12}\E \tau \\
&\lesssim |z_0|^{1/3}(|z_0|/\alpha)\E \tau \;,
\end{align*}
which implies \eqref{e:realBound} provided $\delta\leq 1/12$.
The additional contribution coming from the fact that some intervals may not be paired up is, for each of the $D_n$'s, of order at most
$G_n t_n \lesssim 1$, so the total contribution coming from this is at most
\begin{equ}[e:lastCont]
\log |z_0| \lesssim 
\begin{cases}
|z_0|^{\kappa-2} \alpha^2 \,\E \tau\lesssim |z_0|^{-{3\over 4}} \alpha^{1+\kappa} \,\E \tau, \quad &|z_0|\leq \alpha,\\
|z_0|^{1/3} \,\E \tau, \quad &|z_0|> \alpha,
\end{cases}
\end{equ}
which is again of the desired order.

\medskip\noindent\textbf{The case $\boldsymbol{-\alpha^{4/5} \le z_0 \le -\alpha^{2/3}}$.} 
In this case, we only have a contribution of $\log|z_0|$ for each round, and the expected number of rounds is bounded by 
$\sqrt{|z_0|} \;\E\tau+1$. Hence, the total contribution in this case is
\begin{align*}
\log|z_0| \left(\sqrt{|z_0|} \;\E\tau+1\right)\lesssim |z_0|^\kappa \left(\frac{\alpha}{z_0}\right)^2\E\tau\lesssim \alpha^\kappa \left(\frac{\alpha}{z_0}\right)^2\E\tau\;,
\end{align*}
as desired.

\medskip\noindent\textbf{The case $\boldsymbol{\alpha^{2/3} \le z_0 \le \alpha^{4/5+\kappa}}$.} 
This is similar to the previous case with the difference that $F_N$ is no longer bounded from below,
so it is more difficult to control the time spent in $D_N$. However, 
the integrand is bounded by $\sqrt{z_0}$ there, so the additional contribution 
coming from the time spent in  $D_N$ is at most $\sqrt{z_0}\, \E \tau \lesssim 
\alpha^{2/5+\kappa/2}\E \tau \le \alpha^{8/15+\kappa/2}\E\tau$ as required.
Finally, we have a contribution from $F_\infty$ in this regime, but since $F_\infty \lesssim \sqrt{z_0}$ 
we can treat this as an
error term which is of the same order as the previous contribution.

\medskip\noindent\textbf{The case $\boldsymbol{z_0 \ge \alpha^{24/23}}$.} 
This case is trivial since one has $|F_\infty| + |F| \le G_\alpha$.

\medskip\noindent\textbf{The case $\boldsymbol{\alpha^{4/5+\kappa} \le z_0 \le \alpha^{24/23}}$.} 
In this regime, we aim to show that most of the time the dynamic is very 
close to tracking the stable fixed point $\theta_+$. 
Write now $\tau_\star \le \tau$ for the minimum between $\tau$ and the first time when 
one has $|\theta(t) - \theta_+(z_0)| \le 1/(2 \sqrt{z_0})$.
It is then straightforward to show that $4\sqrt{z_0} \theta(t) \in [1,7]$ holds for all 
$t \in [\tau_\star, \tau]$. This is a consequence of the fact that 
$4\sqrt{z_0} \theta_+(z(t)) \in [2,6]$ (for example) for all $t \le \tau$ so that the interval
$[1,7]$ acts as a `trap' for $4\sqrt{z_0} \theta(t)$.

We then rewrite \eqref{e:equationTheta} as
\begin{equ}
\dot \theta = 1- z\sin^2(\theta) = 1- a^2(t) \theta^2\;,\qquad a^2(t) = z(t) {\sin^2(\theta) \over \theta^2}\;.
\end{equ}
Setting $f(t) = 1 + a(t)\theta(t)$, we are precisely in the situation of Corollary~\ref{cor:goodBound} below. By choosing $\alpha$ sufficiently large, we can 
guarantee that, for $t \in [\tau_\star, \tau]$, $|\theta|$ is sufficiently 
small so that ${\sin^2(\theta) \over \theta^2} > {1 \over 2}$, so that the assumptions
of the corollary are satisfied with $f_0 = 1$ and $a_0 = \sqrt{z_0}/2$.
Since $|\dot \theta| \lesssim 1$ and, provided that \eqref{e:goodSet} holds which we can assume
without loss of generality, $z$ satisfies
\begin{equ}
|z(t) - z(s)| \lesssim \alpha^{1+\delta} |t-s|^{{1\over 2}-\delta} + |t-s|z_0\;.
\end{equ}
Since $a_0 \approx \sqrt{z_0}$, a simple calculation shows that, on this event,
one can take the constant $K$ appearing
in Proposition~\ref{prop:boundStable} to be of order at most 
$\alpha^{1+\delta}z_0^{(2\delta-3)/4} \le \alpha^{1+2\delta}z_0^{-3/4}$. Since, by \eqref{e:defFP}, one furthermore has the bound 
\begin{equ}
|z\sin(2\theta_+(z)) - 2\sqrt z| \lesssim {1\over \sqrt z}
\end{equ}
in the region under consideration, we conclude that 
\begin{equ}
|z(t)\sin(2\theta(t)) - 2\sqrt {z(t)}| \lesssim {1\over \sqrt z_0} + \frac{\alpha^{1+2\delta}}{z_0^{3/4}} + \sqrt{z_0} \e^{-\sqrt{z_0} t/2}\;.
\end{equ}
This yields the bound 
\begin{equ}
\int_0^\tau |F_\infty(u(t)) - F(u(t))|\,\dd  t \lesssim 
\int_0^{\tau_\star} |F_\infty(u(t)) - F(u(t))|\,\dd t
+ 1 + \frac{\alpha^{1+2\delta}}{z_0^{3/4}}\tau\;.
\end{equ}
Choosing $\delta = \kappa/2$, we have
\begin{equ}
\E\Bigl(1 + \frac{\alpha^{1+2\delta}}{z_0^{3/4}}\tau\Bigr) \lesssim \Bigl(\frac{\alpha^{1+2\delta}}{z_0^{3/4}} + {\alpha^2 \over z_0^2}\Bigr)\E \tau
\lesssim {\alpha^{1+\kappa} \over z_0^{3/4}} \E \tau
\le G_\alpha(z_0) \E \tau\;,
\end{equ}
as required, so it remains to bound the first term.

For this, write 
\begin{equ}
\tau_1 = \inf\Big\{t>0\,:\, |\theta(t) - \theta_-(z(t))| \ge {1\over 2 \sqrt{z_0}}\Big\}\;.
\end{equ}
The contribution of $\int_{\tau_1}^{\tau_\star} |F_\infty(u(t)) - F(u(t))|\,dt$
is then bounded by $\CO(\log z_0)$ in exactly the same way as the contribution
 \eqref{e:lastCont} considered in the regime $z_0 \le -\alpha^{4/5}$.
 Writing similarly $\tau_0$ for the first time such that 
$|\theta(t) - \theta_-(z(t))| \ge z_0^{-2/3}$,
we can bound the contribution from
$\tau_0$ to $\tau_1$ by writing similarly to before
\begin{equ}
\dot \theta = 1- a^2(t) \theta^2 = f(t)\big(1 + a(t) \theta\big)\;,\qquad f(t) = 1-a(t)\theta(t)\;,
\end{equ}
and making use of the lower bound in Corollary~\ref{cor:goodBound}.
Since $K \lesssim \alpha^{1+2\delta}z_0^{-3/4}$ and $a_0 \approx \sqrt{z_0}$ as before, 
one has $z_0^{-2/3} \gg K/a_0^2$ so that the assumptions of the corollary are satisfied, which
shows that $|\tau_1 - \tau_0| \lesssim {\log z_0 \over \sqrt {z_0}}$.
The contribution of this regime is therefore bounded by $\log z_0$ as before.
 
To bound the contribution up to time $\tau_0$, we make use of Lemma~\ref{lem:expGrowth}.
Our aim is to use it in order to show that 
\begin{equ}[e:wantedBoundtau0]
\P \big(\tau_0 > \alpha^\kappa z_0^{-1/2}\big) \lesssim \alpha^{-2/3}\;,
\end{equ}
so that, as a consequence of Corollary~\ref{cor:negligible}, we obtain
\begin{equ}
\E\int_0^{\tau_0}|F_\infty(u(t)) - F(u(t))|\,dt 
\lesssim \alpha^\kappa + |z_0| \alpha^{\kappa-2/3} \E \tau
\le G_\alpha(z_0)\, \E \tau\;,
\end{equ}
as desired. As before, we can furthermore assume that we are on the event \eqref{e:goodSet}.
In particular, as long as $t < \alpha^\kappa z_0^{-1/2}$, we have the bound
\begin{equ}[e:boundzz0]
|z(t) - z_0| \lesssim \alpha^{1+\delta} t^{{1\over 2}-\delta} + |z_0| t
\lesssim \alpha z_0^{-1/5}\;,
\end{equ}
provided that both $\kappa$ and $\delta$ are sufficiently small.
In order to place ourselves in the framework of Lemma~\ref{lem:expGrowth}, we set
$\tilde \theta = \theta + 1/\sqrt z$, so that
\begin{equs}
\dd \tilde\theta &= 2\sqrt{z_0} \tilde \theta \,\dd t
+ \bigl(1 - z \sin^2(\tilde \theta - 1/\sqrt z) -2\sqrt{z} \tilde \theta \bigr)\,\dd t
+ 2(\sqrt z-\sqrt{z_0}) \tilde \theta \,\dd t \\
&\quad+ {\gamma \over 2\sqrt z} \,\dd t - {\gamma z_\star \over 2 z^{3/2}} \,\dd t + {3\alpha^2 \over 8 z^{5/2}}\,\dd t - {\alpha \over 2 z^{3/2}}\dd W\;.
\end{equs}
Note that there exists $\bar \eps$ sufficiently small such that if we 
consider times such that $|z-z_0| \le \bar \eps |z_0|$, we are in the setting of 
Lemma~\ref{lem:expGrowth} with $a = 2\sqrt {z_0}$ and $b = \alpha / (2z_0^{3/2})$,
so that $b \sqrt a \approx \alpha z_0^{-5/4}$.

Indeed, we note that since $\tilde \theta \lesssim z^{-2/3}$, we have the bound
\begin{equ}
\big|\sin^2(\tilde \theta - 1/\sqrt z) - (\tilde \theta - 1/\sqrt z)^2 \big| \lesssim 1/z^2\;,
\end{equ}
so that, at least for $t < \alpha^\kappa z_0^{-1/2}$, one has
\begin{equs}
\bigl|1 - z \sin^2(\tilde \theta - 1/\sqrt z) -2\sqrt{z} \tilde \theta \bigr|
&\lesssim |z\tilde \theta^2| +  {1\over z} \lesssim z^{-1/3}
\lesssim \alpha^{22/23} z_0^{-5/4}
\ll b \sqrt a\;,\\
|\sqrt z-\sqrt{z_0}| |\tilde \theta| &\lesssim \alpha z_0^{-41/30} \ll b \sqrt a\;,\\
{\gamma z_\star \over 2 z^{3/2}} \lesssim {\gamma \over 2\sqrt z} &\lesssim \alpha^{18/23}  z_0^{-5/4}\ll b \sqrt a\;,\\
{3\alpha^2 \over 8 z^{5/2}} &\lesssim  \alpha^{1-\kappa} z_0^{-5/4} \ll b \sqrt a\;.
\end{equs}
Applying Lemma~\ref{lem:expGrowth} with $K \approx z_0^{13/12} / \alpha$, we conclude 
that there exists a constant $C$ such that 
\begin{equ}
\P \Big(\tau_0 > {C \log z_0 \over \sqrt{z_0}}\Big) \le \alpha^{-2/3}\;,
\end{equ}
which indeed implies the required bound \eqref{e:wantedBoundtau0}.

\medskip\noindent\textbf{The case $\boldsymbol{\alpha}$ small and $\boldsymbol{z_\star < 1}$.} 
We now proceed to the proof of the second limit in in our statement. The methodology is the same as above,
but with $F_\infty = -1$, different choices of $G_\alpha$ and $\tau_n$, and the different terms are 
less delicate to estimate. 
This time, we set
\begin{equ}
\CZ(z) = 
\left\{\begin{array}{cl}
	[-2\alpha^{3/4},2\alpha^{3/4}] & \text{if $|z-z_\star| \le \alpha^{3/4}$,} \\
	\{z_\star + c(z-z_\star)\,:\, c \in [1/2,2]\} & \text{otherwise,}
\end{array}\right.
\end{equ}
and we write
\begin{equ}
\tau_{n+1} = (\tau_n + \alpha^{-2}) \wedge \inf\{t > \tau_n\,:\, z(t) \not \in \CZ(z(\tau_n))\}\;. 
\end{equ}
Finally, we set 
\begin{equ}[e:defGalpha]
G_\alpha(z) = 
\left\{\begin{array}{cl}
	C\alpha^{3/4} & \text{if $|z - z_\star| < \alpha^{3/4}/2$,} \\
	1+|z| & \text{otherwise.}
\end{array}\right.
\end{equ}
It is immediate that, if we define $\underline G_\alpha$ as in \eqref{e:defGalphab},
then $\int \underline G_\alpha\,\dd \mu_\alpha \lesssim \alpha^{3/4}$, so that the required bound
follows if we can show \eqref{e:realBound}.
Furthermore, \eqref{e:realBound} (with $F_\infty = -1$) clearly holds for $|z_0-z_\star| \ge \alpha^{3/4}$
since in that regime one has $|F(\theta,z)+1| \le |z| \le \underline G_\alpha(z_0)$ for all $z \in \CZ(z_0)$.

For $|z_0-z_\star| \le \alpha^{3/4}$, we note first that $\P(\tau < \alpha^{-2})$ decays like
$\exp(-c/\sqrt \alpha)$ as a consequence of standard Gaussian estimates, so we can assume that 
$\tau = \alpha^{-2}$. We also write $t_\star$ for the period of the ODE
\begin{equ}[e:simpleTheta]
\dot{\hat\theta} = 1-z_\star \sin^2(\hat \theta)\;,
\end{equ}
which is finite since $z_\star < 1$. By symmetry, we have
\begin{equ}
\int_0^{t_\star} \big(F(\hat \theta(t),z_\star)+1\big)\,\dd t = 0\;,
\end{equ}
independently of the initial condition $\hat \theta(0)$. We then break the interval $[0,\tau]$ into
chunks $[t_k,t_{k+1}]$ of length $t_\star$ and note that, by setting $\hat \theta_n$ to be the 
solution to~\eqref{e:simpleTheta} with $\hat \theta_n(t_n) = \theta(t_n)$, one has
\begin{equ}
\Big|\int_{t_k}^{t_{k+1}} \big(F(\theta(t),z(t))+1\big)\,\dd t\Big|
\le \int_{t_k}^{t_{k+1}} \big|F(\theta(t),z(t)) - F(\hat \theta(t),z_\star)\big|\,\dd t
\lesssim \alpha^{3/4}\;,
\end{equ}
except possibly for the last chunk which yields at most a contribution of order $1$.
It follows that we have 
\begin{equ}
\Big|\int_{0}^{\tau} \big(F(\theta(t),z(t))+1\big)\,\dd t\Big|
\lesssim 1 + \alpha^{{3\over 4}-2} \lesssim \alpha^{3/4} \E\tau \lesssim \underline G_\alpha(z_0)\E\tau\;,
\end{equ}
as required.

\medskip\noindent\textbf{The case $\boldsymbol{\alpha}$ small and $\boldsymbol{z_\star > 1}$.} 
We use the same definitions as in the previous case and note again that we only
need to consider the case $|z_0-z_\star| \le \alpha^{3/4}$ and $\tau = \alpha^{-2}$.
This time we note that as before the reference dynamic \eqref{e:simpleTheta} admits two fixed points
$\theta_\pm^\star$ such that $\sin \theta_\pm^\star = \pm 1/\sqrt{z_\star}$, so that
\begin{equ}
\sqrt{z_\star-1}-1 = F(\theta_+^\star,z_\star)\;.
\end{equ}
The claim therefore follows if we can show that, for $\tau = \alpha^{-2}$,
\begin{equ}[e:wantedBound]
\int_0^{\tau} \big|F(\theta(s),z(s)) - F(\theta_+^\star,z_\star)|\,ds \lesssim \alpha^{{3\over 4}-2}\;.
\end{equ}

\begin{remark}
To understand the behaviour of $\lambda_\alpha$ in this case, one could heuristically think as follows:
in the limit $\alpha\to 0$, one expects
the sequence of  invariant measures $\{\mu_\alpha\}_\alpha$ to accumulate more and more 
on the stable fixed point represented by the point-mass $\delta_{(\theta_+^\star,z_\star)}$.
From \eqref{e:defLalpha}, we see that $\lambda_0$ can then be computed explicitly,
leading to \eqref{e:lambalph}.
Estimate \eqref{e:wantedBound} makes  this heuristic precise by asserting that the system 
indeed spends most of the time near the stable fixed point.
\end{remark}

The proof of this bound follows the same lines as the proof of the case 
$\alpha^{4/5+\kappa} \le z_0 \le \alpha^{24/23}$ given above, except that the various
regimes are much easier to treat. Since $|\theta_+(z(t))-\theta_+^\star| \lesssim \alpha^{3/4}$
and since this is a stable fixed point (at least for $\alpha$ sufficiently small depending
on $z_\star$), it follows immediately that once $|\theta(t)- \theta_+^\star| \le C\alpha^{3/4}$
for a suitable $C$, this bounds holds for all subsequent times. 
Set $\tau_1 = \inf\{t > 0\,:\,  |\theta(t)- \theta_+^\star| \le C\alpha^{3/4}\}$
and break the integral in \eqref{e:wantedBound} into a contribution up to $\tau_1$ and a remainder.
Since the integrand in the remainder is bounded by $C\alpha^{3/4}$ and since we
are restricted to the event $\tau \le \alpha^{-2}$, this term is indeed bounded by $\CO(\alpha^{{3\over 4}-2})$
as required. 

Since on the other hand the integrand is always bounded by $\CO(1)$, it remains to show that 
\begin{equ}[e:wantedtau1]
\P(\tau_1 > \alpha^{3/4-2} ) \lesssim \alpha^{3/4}\;.
\end{equ}
Setting as before 
$\tau_0 = \inf\{t > 0\,:\,  |\theta(t)- \theta_-^\star| \ge C\alpha^{3/4}\}$, a very brutal bound
shows that $\tau_1 - \tau_0 \lesssim \alpha^{-3/4}$. This is simply because the right hand side
of the equation for $\theta$ is of size at least $\alpha^{3/4}$ during that time so that it takes time
at most $\CO(\alpha^{-3/4})$ to move by an order $1$ distance. It therefore remains to show
that a bound of the form \eqref{e:wantedtau1} holds for $\tau_0$, for which we would like
to apply Lemma~\ref{lem:expGrowth} again. 

If we set similarly to before $\tilde \theta(t) = \theta(t) - \theta_-(z(t))$,
then we see that it satisfies an equation of the form
\begin{equ}
\dd \tilde \theta = a \tilde \theta\,\dd t - {\gamma (z-z_\star)\over 2z_\star\sqrt{z_\star-1}}\,\dd t + G_1(\tilde\theta,z)\,\dd t + b\,G_2(z)\,\dd W\;,
\end{equ}
where
\begin{equ}
a = -2z_\star \sin\theta_-^\star\cos\theta_-^\star
= 2\sqrt{z_\star -1}\;,\qquad b = {\alpha \over 2z_\star\sqrt{z_\star-1}}\;,
\end{equ}
and where the nonlinearities $G_{1,2}$ satisfy the bounds
\begin{equ}[e:boundsG]
|G_1(\tilde \theta,z)| \lesssim \alpha^{3/2} + \alpha^{3/4}|\tilde \theta|\;,\qquad 
|G_2(z) - 1| \lesssim \alpha^{3/4}\;.
\end{equ}
Unfortunately, the second term appearing in the right hand side is of order $\alpha^{3/4}$,
which is much larger than $b\sqrt a = \alpha/z_\star$ so it appears that Lemma~\ref{lem:expGrowth}
doesn't apply. The trick is to look for a constant $c$ such that if we set
\begin{equ}
\hat \theta(t) = \tilde \theta(t) - c(z-z_\star)\;,
\end{equ}
this term cancels out. A simple calculation shows that this is the case if we choose
\begin{equ}
c = {\gamma \over 2z_\star \sqrt{z_\star-1} (\gamma+a)}\;.
\end{equ}
With this choice, $\hat \theta$ then satisfies 
\begin{equ}
\dd \hat \theta = \hat a \hat \theta\,\dd t  + \hat G_1(\hat\theta,z)\,\dd t + \hat b\,\hat G_2(z)\,\dd W\;,
\end{equ}
with $\hat a = a$, 
\begin{equ}
\hat b = {\alpha \over z_\star (2\sqrt{z_\star-1} + \gamma)}\;,
\end{equ}
and $\hat G_{1,2}$ satisfying the same bounds \eqref{e:boundsG} as $G_{1,2}$. We are now in the 
setting of Lemma~\ref{lem:expGrowth}. Since there exists a constant $\hat C$ such that 
$|\tilde \theta| > \hat C\alpha^{3/4}$ implies that $|\theta - \theta_-^\star| > C\alpha^{3/4}$,
the required bound on $\tau_0$ follows at once.

\medskip\noindent\textbf{The case $\boldsymbol{\alpha}$ small and $\boldsymbol{z_\star = 1}$.} 
This is the case where the two fixed points $\theta_\pm^\star$ are merged into one. We proceed again
in the same way as above with $G_\alpha$ as in \eqref{e:defGalpha}, but this time equal to $C\alpha^{1/4}$
for $|z-z_\star| \le \alpha^{3/4}/2$. Again, it suffices to consider the case $\tau = \alpha^{-2}$ and 
$|z_0-z_\star| \le \alpha^{3/4}$. Define the two intervals 
\begin{equ}
D_0 = \{\theta\,:\, |\theta - \theta_\pm^\star| \le C\alpha^{3/8}\}\;,\qquad
D_1 = \{\theta\,:\, |\theta - \theta_\pm^\star| \le \alpha^{1/8}\}\;,
\end{equ}
and write $D_1^c$ for the complement of $D_1$. The trajectory $t \mapsto \theta(t)$ can then
be partitioned into intervals according to whether $\theta(t) \in D_1$ (called `slow intervals') 
or $\theta(t) \in D_1^c$ (`fast intervals'). 
A simple comparison with scaled translates of the
ODE $\dot u = u^2$ shows that it takes at most a time of  order $\alpha^{-{1\over8}}$ to traverse 
a fast interval  and at least a time of order $\alpha^{- {3\over 8}}$ to go from
the boundary of $D_1$ to that of $D_0$ (and therefore \textit{a fortiori} also to traverse a slow interval).
Since one has $|F(\theta,z)| \lesssim 1$ in $D_1^c$ and 
$|F(\theta,z)| \lesssim |\theta - \theta_\pm^\star|^2 \lesssim \alpha^{1\over4}$ in $D_1$, we conclude that 
the average of $F$ over any time interval of size at least $\alpha^{-{3\over8}}$ is bounded
by $C \alpha^{1\over 4} + C \alpha^{{3\over8}-{1\over8}} \lesssim \alpha^{1\over4}$ as claimed.
\end{proof}

We complete this section with a bound on the stopping time $\tau$ given as in \eqref{e:deftaun} by 
\begin{equ}
\tau = \inf\{t > 0 \,:\, z(t) \not\in \CZ(z_0)\}\;,
\end{equ}
where $z$ denotes the solution to \eqref{e:defOU} with initial condition $z_0$.

\begin{lemma}\label{lem:stopTime}
For every $k \ge 1$ there exists a constant $c$ such that, for all $|z_0| \ge 1$ and all $\alpha \ge 1$,
one has $\E \tau \ge c \big(1 \wedge (z_0/\alpha)^2\big)$ and $({\E \tau^k})^{1/k} \le c^{-1} \big(1 \wedge (z_0/\alpha)^2\big)$.
\end{lemma}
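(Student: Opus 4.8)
The plan is to control the exit time $\tau$ of the Ornstein--Uhlenbeck process $z$ from $\CZ(z_0)$ by combining explicit Gaussian estimates with a renewal argument, treating separately the \emph{noise-dominated} regime $z_0 \le \alpha$ (where $T_0 := 1\wedge (z_0/\alpha)^2 = (z_0/\alpha)^2 \le 1$ and the exit is driven by the noise) and the \emph{drift-dominated} regime $z_0 > \alpha$ (where $T_0 = 1$ and the exit is driven by the restoring drift $-\gamma(z-z_\star)$). By the change of variables $(z,z_\star,W)\mapsto (-z,-z_\star,-W)$, which maps the equation to itself with $z_\star$ replaced by the still-fixed constant $-z_\star$ and $\CZ(-z_0)$ to $-\CZ(z_0)$, we may assume throughout that $z_0 \ge 1$, so that $\CZ(z_0) = [z_0/2,2z_0]$ and $\tau = \inf\{t\ge 0 : z(t)\notin [z_0/2,2z_0]\}$. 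First I would record the representation
\begin{equ}
z(t) = z_\star + (z_0 - z_\star)\e^{-\gamma t} + \alpha\,\e^{-\gamma t}\,\tilde W_t\;,\qquad \tilde W_t = \int_0^t \e^{\gamma s}\,\dd W_s\;,
\end{equ}
where $\tilde W$ is a continuous martingale with $\langle \tilde W\rangle_t = (\e^{2\gamma t}-1)/(2\gamma)$, together with the elementary consequences, valid for $t\le 1$ and $z_0\ge 1$: the drift displacement obeys $|(z_0-z_\star)(1-\e^{-\gamma t})|\le c_\star z_0 t$, one has $\E\sup_{s\le t}\tilde W_s^2 \le 4(\e^{2\gamma t}-1)/(2\gamma)\le c_\gamma t$ by Doob's $L^2$ inequality, and $\sup_{s\le t}|\e^{-\gamma s}\tilde W_s|\le \sup_{s\le t}|\tilde W_s|$. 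Here $c_\star,c_\gamma$ depend only on $\gamma,z_\star$.

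For the lower bound I would show that $\P(\tau \le \eps T_0)\le \tfrac12$ for $\eps=\eps(\gamma,z_\star)$ small, which gives $\E\tau \ge \eps T_0/2$. Indeed, on $\{\tau\le \eps T_0\}$ one has $\sup_{t\le \eps T_0}|z(t)-z_0|\ge z_0/2$; since the drift displacement over $[0,\eps T_0]$ is at most $c_\star z_0\eps T_0 \le c_\star z_0 \eps \le z_0/4$ once $\eps\le 1/(4c_\star)$ (using $T_0\le 1$), this forces $\alpha\sup_{t\le \eps T_0}|\tilde W_t|\ge z_0/4$, an event of probability at most $(16\alpha^2/z_0^2)\,\E\sup_{t\le \eps T_0}\tilde W_t^2 \le 16 c_\gamma\eps\,\alpha^2 T_0/z_0^2 \le 16 c_\gamma\eps$ by Chebyshev and $T_0\le (z_0/\alpha)^2$; taking $\eps$ smaller still makes this $\le \tfrac12$.

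For the upper bound the plan is a geometric estimate. I claim there exist $p_0\in(0,1]$ and $C_0 = C_0(\gamma,z_\star)\ge 1$ such that, starting from any $z'\in[z_0/2,2z_0]$, the process exits $[z_0/2,2z_0]$ by time $C_0 T_0$ with probability at least $p_0$. Granting this, the strong Markov property at the times $nC_0 T_0$ gives $\P(\tau > nC_0 T_0)\le (1-p_0)^n$, whence $\E\tau^k = \int_0^\infty ks^{k-1}\P(\tau>s)\,\dd s \le (C_0T_0)^k\sum_{n\ge 0}\bigl((n+1)^k-n^k\bigr)(1-p_0)^n =: C_k(C_0T_0)^k$, so $(\E\tau^k)^{1/k}\le C_k^{1/k}C_0\,T_0$, and choosing $c = \min(\eps/2,\ C_k^{-1/k}C_0^{-1})$ completes the lemma. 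To prove the claim when $z_0\le\alpha$ I would force $z(C_0T_0) > 2z_0$: its deterministic part is bounded below by $-c_1 z_0$ (a crude bound, $c_1$ depending on $z_\star$), while $\alpha\e^{-\gamma C_0T_0}\tilde W_{C_0T_0}\sim\CN(0,\sigma^2)$ with $\sigma^2 = \alpha^2(1-\e^{-2\gamma C_0T_0})/(2\gamma)\ge c_2\alpha^2 C_0 T_0 = c_2 C_0 z_0^2$ (since $T_0=(z_0/\alpha)^2$ and $s\mapsto (1-\e^{-2\gamma s})/(2\gamma s)$ is bounded below on $(0,C_0]$), so $\{\alpha\e^{-\gamma C_0T_0}\tilde W_{C_0T_0} > (c_1+3)z_0\}$ has probability bounded below by a positive $p_0$ independent of $z_0,\alpha$, and on it $z$ has crossed $2z_0$. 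When $z_0>\alpha$ (so $C_0T_0=C_0$), I would instead force $z(C_0)<z_0/2$: picking $C_0$ so large that $\e^{-\gamma C_0}<1/9$ and assuming first $z_0\ge 4|z_\star|$, the deterministic part at time $C_0$ lies below $z_0/2$ for all $z'\in[z_0/2,2z_0]$, and the symmetric mean-zero Gaussian noise term is $\le 0$ with probability $\tfrac12$; for the remaining \emph{compact} parameter window $1\le\alpha\le z_0\le 4|z_\star|$ one obtains a uniform positive lower bound on the exit probability by time $C_0$ from non-degeneracy of the law of $\alpha\e^{-\gamma C_0}\tilde W_{C_0}$ (its variance is bounded away from $0$ because $\alpha\ge 1$). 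Taking a common $C_0$ and $p_0$ across all cases establishes the claim.

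The main obstacle is the drift-dominated regime $z_0\gg\alpha$: there $\tau$ is of order $1$ rather than of order $(z_0/\alpha)^2$, so the constants relating $\E\tau$ and $(\E\tau^k)^{1/k}$ to $T_0=1$ must be allowed to depend on $\gamma$ — which forces $C_0$ to be taken large in terms of $\gamma$ — and, additionally, one has to peel off the compact window $1\le\alpha\le z_0\lesssim |z_\star|$ where neither "drift wins'' nor "noise wins'' is quantitatively clean, and fall back on a soft non-degeneracy argument. Everything else reduces to routine bookkeeping with standard Ornstein--Uhlenbeck and Gaussian tail estimates, together with Doob's inequality and the renewal computation above.
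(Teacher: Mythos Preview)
Your proposal is correct and follows a genuinely different route from the paper's proof, especially for the upper bound. For the lower bound, the paper argues pathwise: from $|z(\tau)-z_0|\ge |z_0|/2$ and $|z(\tau)-z_0|\le 2|z_0|\tau + \alpha|W(\tau)|$ it deduces $\tau \ge \tfrac18 \wedge (z_0/\alpha)^2\sigma$ with $\sigma$ the first time $|W|$ hits $1/4$, and then takes expectations. Your Chebyshev--Doob estimate of $\P(\tau\le \eps T_0)\le \tfrac12$ reaches the same conclusion by a slightly softer argument. For the upper bound the paper does not use a renewal scheme at all: it invokes small-ball estimates for Brownian motion to get $\P(\tau\ge t)\le C\exp(-ct/\eps^2)$ for $t\le 1$, and a linear-boundary-crossing formula (the cited \cite{HallLinear}) to get $\P(\tau\ge t)\lesssim (1\wedge\eps^2)\exp(-\eps^2 t/8)$ for $t\ge 2$, then integrates $k t^{k-1}\P(\tau\ge t)$ directly. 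Your approach --- show a uniform-in-starting-point lower bound $p_0$ on the probability of exiting within a window of length $C_0T_0$, then iterate via the Markov property to get geometric tails $\P(\tau>nC_0T_0)\le (1-p_0)^n$ --- is more elementary and entirely self-contained (no small-ball estimates or boundary-crossing identities needed), at the cost of not yielding the sharper explicit exponential rates the paper obtains. One minor remark: in the noise-dominated step you write $\sigma^2\ge c_2 C_0 z_0^2$, but the OU variance saturates at $\alpha^2/(2\gamma)$ as $C_0\to\infty$; what you actually need (and what your parenthetical remark in fact gives) is only $\sigma^2\ge c\,z_0^2$ for some $c>0$ once $C_0$ is fixed, which suffices for a uniform lower bound on $p_0$.
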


\begin{proof}
We first show the lower bound. One has
\begin{equ}
{|z_0|\over 2} \le |z(\tau) - z_0| = \Big|\int_0^\tau z(s)\,\dd s + \alpha W(\tau)\Big|
\le 2|z_0| \tau + \alpha |W(\tau)|\;.
\end{equ}
It follows that either $\tau > {1\over 8}$ or $|W(\tau)| \ge |z_0|/(4\alpha)$, so that
$\tau \ge {1\over 8} \wedge (z_0/\alpha)^2 \sigma$, where the random variable $\sigma$ is equal in law to
\begin{equ}
\sigma = \inf\{s \ge 0\,:\, |W(s)| = 1/4\}\;.
\end{equ}
If $|z_0| \ge \alpha$, we then have
\begin{equ}
\E \tau \ge {1\over 8} \P\Big(\sup_{s \le 1/8} |W(s)| < 1/4\Big) \ge c\;,
\end{equ}
for some $c > 0$ as required. In the regime $\alpha \ge |z_0|$,
we have 
\begin{equ}
\E \tau \ge {1\over 8}(z_0/\alpha)^2 \E \bigl(\sigma \wedge (\alpha/z_0)^2\bigr)
\ge {1\over 8}(z_0/\alpha)^2 \E \bigl(\sigma \wedge 1\bigr) \ge c (z_0/\alpha)^2\;,
\end{equ}
for some (possibly different) strictly positive constant $c$ as required.

To show the upper bound, note first that we can assume without loss of generality that $z_0>0$.
For every stopping time $t \le \tau$ we then have, similarly to before,
\begin{equ}
z_0 \ge |z(t) - z_0| = \Big|\int_0^t z(s)\,\dd s + \alpha W(t)\Big|
\ge \alpha |W(t)| - 2|z_0| t \;,
\end{equ}
so that, setting $\eps = |z_0| / \alpha$, one has
\begin{equ}[e:upperBoundW]
|W(t)| \le \eps (1+2t)\;.
\end{equ}
It follows from the standard small ball estimates for Brownian motion \cite{Chung,ReviewLi} that, 
for all $t \le 1$, one has the bound
\begin{equ}
\P (\tau \ge t) \le C\exp \Bigl(- c{t  \over \eps^2}\Bigr)\;,
\end{equ}
for some constants $c,C> 0$.
On the other hand, for every $t \le \tau$, we also have
\begin{equ}
{z_0\over 2} \le z(t) \le z_0 \bigl(1- \f t2\bigr) + \alpha W(t)\;,
\end{equ}
which, by \cite[Eq.~3]{HallLinear}, implies that 
\begin{equ}
\P (\tau \ge t) \le \P\Big(W(t) \ge {\eps\over 2}(t-1)\Big) - \e^{\eps^2/2} \P\Big(W(t) \ge {\eps\over 2}(t+1)\Big)\;,
\end{equ}
so that in particular, for all $t \ge 2$,
\begin{equ}
\P (\tau \ge t) \lesssim (1 \wedge \eps^2) \exp(-\eps^2 t/8)\;.
\end{equ}
For $\eps \le 1/2$ it follows that, for a suitable $c > 0$,
\begin{equs}
\E \tau^k &= k \int_0^\infty t^{k-1} \P(\tau \ge t)\,\dd t \\
&\lesssim \int_0^1 t^{k-1} \e^{- c t / \eps^2}\,\dd t
+ \e^{- c / \eps^2}\int_1^{\eps^{-4}} t^{k-1} \,\dd t
+ \eps^2 \int_{\eps^{-4}}^\infty t^{k-1} \e^{-c\eps^2 t}\,\dd t\\
&\lesssim \eps^{2k} + \eps^{-4k} \e^{- c / \eps^2}
+ \eps^{2-2k} \e^{- c /\eps^2}\lesssim \eps^{2k}\;,
\end{equs}
as claimed. For $\eps \ge 1/2$, we have
\begin{equs}
\E \tau^k &\le 2 + k \int_2^\infty t^{k-1} \P(\tau \ge t)\,\dd t 
\lesssim 1 + \int_2^\infty t^{k-1} \e^{-c\eps^2 t}\,\dd t \\
&\lesssim 1 + \eps^{-k} \e^{-2c\eps^2} \lesssim 1\;,
\end{equs}
as claimed.
\end{proof}

\begin{corollary}\label{cor:negligible}
Let $A_\alpha$ be a collection of events such that $\P(A_\alpha) \le \alpha^{-2/3}$. Then, for every $\kappa > 0$ there exists a constant $C$ such that
\begin{equ}
\E \Bigl(\one_{A_\alpha} \int_0^\tau |F_\infty-F|\,\dd t\Bigr) \le C |z_0| \alpha^{\kappa -2/3}\,\E\tau\;.
\end{equ}
\end{corollary}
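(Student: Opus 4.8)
The plan is to estimate the integrand by a crude pointwise bound, reducing everything to controlling $\E(\one_{A_\alpha}\tau)$, and then to exploit the smallness of $\P(A_\alpha)$ through Hölder's inequality, feeding in the moment bounds for $\tau$ from Lemma~\ref{lem:stopTime}. Throughout I will assume $|z_0| \ge 1$ and $\alpha \ge 1$, which is the only regime in which the corollary is invoked in the proof of Theorem~\ref{theo:behaviourLyap} (the cases $z_0 \le -\alpha^{4/5}$ and $\alpha^{4/5+\kappa}\le z_0 \le \alpha^{24/23}$), and under which Lemma~\ref{lem:stopTime} applies.

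First I would record the pointwise bound on the difference. Since $|z_0| \ge 1$, \eqref{e:defZz} gives $\CZ(z_0) = \{cz_0\,:\, c\in[1/2,2]\}$, so that $|z(t)| \le 2|z_0|$ for all $t \le \tau$. Using \eqref{e:defFF} and \eqref{e:defFhat}'s integrand, $|F_\infty(\theta,z)| + |F(\theta,z)| \le \sqrt{|z|} + 1 + |z|/2 \lesssim |z_0|$ throughout the excursion, whence
\[
\int_0^\tau |F_\infty - F|\,\dd t \lesssim |z_0|\,\tau\;.
\]

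Next, I would fix $k \ge 1$ (to be chosen at the end depending on $\kappa$) and apply Hölder's inequality with exponents $k/(k-1)$ and $k$ to the pair $(\one_{A_\alpha},\tau)$:
\[
\E\bigl(\one_{A_\alpha}\tau\bigr) \le \P(A_\alpha)^{1-1/k}\bigl(\E\tau^k\bigr)^{1/k} \le \alpha^{-\frac23\left(1-\frac1k\right)}\bigl(\E\tau^k\bigr)^{1/k}\;.
\]
Now Lemma~\ref{lem:stopTime} gives both $\E\tau \ge c\,(1\wedge(z_0/\alpha)^2)$ and $(\E\tau^k)^{1/k} \le c^{-1}(1\wedge(z_0/\alpha)^2)$, so that $(\E\tau^k)^{1/k} \le c^{-2}\,\E\tau$ with a constant depending only on $k$. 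Combining the last two displays yields
\[
\E\Bigl(\one_{A_\alpha}\int_0^\tau|F_\infty-F|\,\dd t\Bigr) \lesssim |z_0|\,\alpha^{-\frac23+\frac{2}{3k}}\,\E\tau\;.
\]

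Finally, choosing $k$ large enough that $\frac{2}{3k}\le\kappa$ makes $\alpha^{-\frac23+\frac{2}{3k}} \le \alpha^{\kappa-\frac23}$ for every $\alpha\ge 1$, which is exactly the asserted bound. I do not expect a genuine obstacle: the argument is routine modulo Lemma~\ref{lem:stopTime}. The only points needing a little care are (i) verifying that in the regimes where the corollary is used one indeed has $|z_0|\ge 1$ and $\alpha\ge 1$, so that both the pointwise bound on $|F_\infty-F|$ and Lemma~\ref{lem:stopTime} are available, and (ii) keeping the upper and lower moment bounds of Lemma~\ref{lem:stopTime} aligned (both comparable to $1\wedge(z_0/\alpha)^2$) so that $(\E\tau^k)^{1/k}$ can be traded for $\E\tau$ at the cost of a $k$-dependent constant, which is then absorbed into $C$.
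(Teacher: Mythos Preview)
Your proof is correct and follows essentially the same route as the paper: bound $|F_\infty - F| \lesssim |z_0|$ on the excursion, apply H\"older's inequality to $\E(\one_{A_\alpha}\tau)$, use Lemma~\ref{lem:stopTime} to trade $(\E\tau^k)^{1/k}$ for $\E\tau$, and choose $k$ large. You have in fact been more careful than the paper in spelling out the pointwise bound and the hypotheses $|z_0|\ge 1$, $\alpha\ge 1$ under which Lemma~\ref{lem:stopTime} applies.
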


\begin{proof}
We have
\begin{equs}
\E \Bigl(\one_{A_\alpha} \int_0^\tau |F_\infty-F|\,\dd t\Bigr)
&\lesssim |z_0| \E \bigl(\one_{A_\alpha} \tau\bigr)
\le |z_0| \P(A_\alpha)^{(k-1)/k} (\E \tau^k)^{1/k} \\
&\lesssim |z_0| \P(A_\alpha)^{(k-1)/k} \E \tau\;,
\end{equs}
where we used Lemma~\ref{lem:stopTime} to get the last bound. The claim now follows 
at once by choosing $k$ sufficiently large.
\end{proof}

\appendix
\section{Useful bounds}

The following can be verified without encountering any surprises.
\begin{lemma}\label{lem:boundDiff}
Let $\theta_1, \theta_2$ be solutions to 
\begin{equ}
\dot \theta_i = F_i(\theta_i, t)\;,\qquad \theta_i(0) = a\;.
\end{equ}
for Lipschitz functions $F_i$ such that $F_i(\theta,t) > 0$. Write $\tau_i = \inf\{t > 0\,:\, \theta_i(t) = b\}$.
Then, for any two bounded continuous functions $G_i$ one has the bound
\begin{equ}
\Big|\int_0^{\tau_1} G_1(\theta_1,t)\,\dd t - \int_0^{\tau_2} G_2(\theta_2,t)\,\dd t\Big|
\le |b-a| {d(G_1,G_2) \underline F_2 + \overline G_2 d(F_1,F_2) \over \underline F_1\underline F_2}\;,
\end{equ}
where $\underline F$ (resp.~$\overline F$) denote the minimum (resp.~maximum) of $F$ and
$d(H_1,H_2) = \sup_{t_1 \le \tau_1,t_2 \le \tau_2,\theta} |H_1(\theta,t_1) - H_2(\theta,t_2)|$.\qed
\end{lemma}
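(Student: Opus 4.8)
The plan is to reduce both integrals to integrals over the common spatial interval $[a,b]$ by a change of variables, and then compare the resulting integrands pointwise. First I would note that since $F_i(\theta,t) > 0$ along the trajectory, the map $t \mapsto \theta_i(t)$ is strictly increasing on $[0,\tau_i]$ and hence invertible; write $t_i = T_i(\theta)$ for its inverse, so that $T_i'(\theta) = 1/F_i(\theta, T_i(\theta))$. Changing variables in each integral gives
\begin{equ}
\int_0^{\tau_i} G_i(\theta_i(t),t)\,\dd t = \int_a^b {G_i(\theta, T_i(\theta)) \over F_i(\theta, T_i(\theta))}\,\dd\theta\;,
\end{equ}
so the quantity to be estimated is $\big|\int_a^b \big( G_1/F_1 - G_2/F_2\big)(\theta, T_i(\theta))\,\dd\theta\big|$, where each factor is evaluated at its own time $T_i(\theta)$.

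Next I would bound the integrand pointwise in $\theta$. Writing $g_i = G_i(\theta,T_i(\theta))$ and $f_i = F_i(\theta,T_i(\theta))$ for brevity, one has the elementary identity
\begin{equ}
{g_1\over f_1} - {g_2\over f_2} = {g_1 - g_2 \over f_1} + g_2\,{f_2 - f_1 \over f_1 f_2}\;,
\end{equ}
so that $|g_1/f_1 - g_2/f_2| \le |g_1-g_2|/\underline F_1 + \overline G_2 |f_1-f_2|/(\underline F_1\underline F_2)$. Since the two arguments $(\theta,T_1(\theta))$ and $(\theta,T_2(\theta))$ differ only in the time slot and both times lie in $[0,\tau_1]\cup[0,\tau_2]$, we have $|g_1-g_2| \le d(G_1,G_2)$ and $|f_1-f_2| \le d(F_1,F_2)$ by the definition of $d$ in the statement. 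Integrating this bound over $\theta \in [a,b]$ (an interval of length $|b-a|$, taking $b > a$ without loss of generality; the case $b < a$ is symmetric after reversing orientation) yields exactly
\begin{equ}
\Big|\int_0^{\tau_1} G_1\,\dd t - \int_0^{\tau_2} G_2\,\dd t\Big| \le |b-a|\,{d(G_1,G_2)\underline F_2 + \overline G_2\, d(F_1,F_2)\over \underline F_1\underline F_2}\;,
\end{equ}
which is the claim.

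There is essentially no obstacle here; the only point requiring a moment's care is the well-definedness of the inverse functions $T_i$ and the fact that the finiteness of $\tau_i$ is automatic once $F_i$ is bounded below by a positive constant on the relevant region (if $F_i$ only satisfies $F_i > 0$ pointwise without a uniform lower bound, $\underline F_i$ could be zero and the bound is vacuous, so the statement is only informative when $\underline F_i > 0$, in which case $\tau_i \le |b-a|/\underline F_i < \infty$). The Lipschitz hypothesis on $F_i$ is used only to guarantee existence and uniqueness of the solutions $\theta_i$; it plays no role in the estimate itself. This is why the lemma can indeed "be verified without encountering any surprises."
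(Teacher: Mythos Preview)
Your proof is correct and is precisely the natural argument; the paper itself provides no proof beyond the remark that the lemma ``can be verified without encountering any surprises'' and a \qed, so your change-of-variables-then-elementary-fraction-identity approach is almost certainly what the authors had in mind.
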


\begin{lemma}\label{lem:expGrowth}
There exists a universal constant $\eps > 0$ such that the following holds.
Let $W$ be a standard Wiener process, let $E$ and $C$ be continuous processes adapted to the filtration generated by $W$, and let $x$ solve
\begin{equ}
\dd x = ax\,\dd t + E(t)\,\dd t + b\,C(t)\,\dd W(t)\;,
\end{equ}
for some constants $a,b > 0$.
Assume that, almost surely, one has $|C(t)-1| \le \eps$ and $|E(t)| \le \eps b\sqrt a$ for all $t>0$ and write $\tau_K$ for
the first time when $|x| \ge K b / \sqrt a$.
Then, the bound
\begin{equ}
\P \Big(\tau_K > {N \over a} \log{K\vee 1 \over \eps} \Big) \lesssim 2^{-N}
\end{equ}
holds uniformly over $x_0 \in \R$, $K > 0$ and every integer $N > 0$.
\end{lemma}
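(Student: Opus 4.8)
The plan is to normalise away the constants $a,b$, use the linearity of the drift to write the solution through a single perturbed martingale, extract a uniform one-block lower bound on the probability of escaping within time $\approx\log K$, and then iterate. Under the substitution $y(s):=(\sqrt a/b)\,\e^{-as}x(s/a)$ one checks that $y$ solves $\dd y = y\,\dd s + \hat E(s)\,\dd s + \hat C(s)\,\dd B(s)$ for a standard Brownian motion $B$, with $\hat E(s)=E(s/a)/(b\sqrt a)$ and $\hat C(s)=C(s/a)$, so that the hypotheses become $|\hat E|\le\eps$, $|\hat C-1|\le\eps$, and $\tilde\tau_K:=\inf\{s:|y(s)|\ge K\}=a\tau_K$. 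It therefore suffices to prove $\P\bigl(\tilde\tau_K>N\log((K\vee1)/\eps)\bigr)\le 2^{-N}$ for this normalised equation, and we do so \emph{uniformly} over $y_0\in\R$ and over all adapted $\hat E,\hat C$ obeying the bounds; this uniformity is exactly what legitimises the iteration below, and $\eps$ will be a sufficiently small universal constant. Now set $Z_s:=\e^{-s}y(s)$: it solves $\dd Z=\e^{-s}\hat E\,\dd s+\e^{-s}\hat C\,\dd B$, hence $Z_s=y_0+A_s+M_s$ with $\|A\|_\infty\le\eps$ and $M$ a continuous martingale with $\dd\langle M\rangle_s=\e^{-2s}\hat C(s)^2\,\dd s$, so $\langle M\rangle_\infty\le\tfrac12$ and $Z_s\to Z_\infty$ a.s. Since $y(s)=\e^sZ_s$, we have $|y(s)|\ge K$ whenever $|Z_s|\ge K\e^{-s}$.

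The heart of the argument is a one-block bound: there are universal $\delta_0>0$ and $C_0$ such that, with $T_*:=\log(K\vee1)+C_0$, one has $\P(\tilde\tau_K\le T_*\mid\CF_0)\ge\tfrac12$. To see it, fix a universal constant $s_{\min}$ (of order $\log(1/\delta_0)$) and condition on $\CF_{s_{\min}}$. On $\{|Z_{s_{\min}}|\ge\delta_0\}$ the tail increment $(M_s-M_{s_{\min}})_{s\ge s_{\min}}$ has total quadratic variation at most $(1+\eps)^2\e^{-2s_{\min}}/2\le\delta_0^2/128$, so Doob's $L^2$ inequality gives $\sup_{s\ge s_{\min}}|M_s-M_{s_{\min}}|\le\delta_0/4$ with probability $\ge\tfrac78$; since also $|A_s-A_{s_{\min}}|\le\eps\le\delta_0/4$, on that event $|Z_s|\ge\delta_0/2$ for all $s\ge s_{\min}$, whence $|y(s)|=\e^s|Z_s|\ge K$ by time $\log(2K/\delta_0)\vee s_{\min}\le T_*$. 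Hence, using $\|A\|_\infty\le\eps\le\delta_0$,
\begin{equ}
\P(\tilde\tau_K\le T_*\mid\CF_0)\ \ge\ \tfrac78\,\P\bigl(|Z_{s_{\min}}|\ge\delta_0\mid\CF_0\bigr)\ \ge\ \tfrac78\,\P\bigl(|y_0+M_{s_{\min}}|\ge2\delta_0\mid\CF_0\bigr)\;.
\end{equ}

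It remains to bound $\sup_{x\in\R}\P(|x+M_{s_{\min}}|<2\delta_0\mid\CF_0)$ away from $1$ for $\delta_0,\eps$ small, and this is the one place where $|C-1|\le\eps$ is genuinely used. Because $\dd\langle M\rangle_u$ is squeezed between $(1-\eps)^2\e^{-2u}\,\dd u$ and $(1+\eps)^2\e^{-2u}\,\dd u$, the clock $\langle M\rangle_{s_{\min}}$ takes values in a \emph{deterministic} interval $[v_1,v_2]$ with $v_1\ge\tfrac25$ and $v_2-v_1\le2\eps$. Representing $M_s=\beta_{\langle M\rangle_s}$ via Dambis--Dubins--Schwarz for a Brownian motion $\beta$, and using that $v_1$ is deterministic so that $\beta_{v_1}\sim\CN(0,v_1)$ has bounded density, we get for every $x$ and $\lambda>0$
\begin{equ}
\P\bigl(|x+M_{s_{\min}}|<2\delta_0\bigr)\ \le\ \P\bigl(|x+\beta_{v_1}|<2\delta_0+\lambda\bigr)+\P\Bigl(\sup_{u\in[v_1,v_2]}|\beta_u-\beta_{v_1}|\ge\lambda\Bigr)\;.
\end{equ}
Taking $\lambda=3\sqrt{v_2-v_1}$, the second term is a fixed small number by Brownian scaling and the first is $\lesssim(\delta_0+\sqrt\eps)/\sqrt{v_1}$, so both are $<\tfrac18$ for $\delta_0,\eps$ small (everything goes through conditionally on $\CF_0$ since $B$ restarts there). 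Thus $\P(|y_0+M_{s_{\min}}|\ge2\delta_0\mid\CF_0)\ge\tfrac34$ and the one-block bound $\P(\tilde\tau_K\le T_*\mid\CF_0)\ge\tfrac{21}{32}>\tfrac12$ follows.

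Finally, by the Markov property at $0,T_*,2T_*,\dots$, on $\{\tilde\tau_K>kT_*\}$ the post-$kT_*$ process solves the same equation with an arbitrary starting point $y(kT_*)$ and still-admissible shifted coefficients, so the one-block bound yields $\P(\tilde\tau_K>(k+1)T_*\mid\CF_{kT_*})\le\tfrac12$ there; iterating, $\P(\tilde\tau_K>mT_*)\le2^{-m}$ for every integer $m$. Choosing the universal $\eps$ small enough that $\log(1/\eps)\ge C_0$ gives $T_*=\log(K\vee1)+C_0\le\log((K\vee1)/\eps)$, so $\{\tilde\tau_K>N\log((K\vee1)/\eps)\}\subseteq\{\tilde\tau_K>NT_*\}$ and $\P(\tilde\tau_K>N\log((K\vee1)/\eps))\le2^{-N}$; undoing the normalisation ($\tilde\tau_K=a\tau_K$) completes the proof. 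The main obstacle is the anti-concentration step: one must extract, uniformly over the adversarially chosen adapted $C$ and over $y_0$, a definite chance that $Z$ sits a fixed distance from the origin, and it is the \emph{pinning} of $\langle M\rangle$ by the bound on $C$ — rather than any independence — that makes this possible; the appearance of $\log(1/\eps)$ in the time scale is cosmetic, serving only to absorb the additive constant $C_0$ once $\eps$ is taken small.
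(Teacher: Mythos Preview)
Your proof is correct and follows essentially the same strategy as the paper: normalise to $a=b=1$, establish a one-block escape bound at time of order $\log K$ via anti-concentration of the stochastic integral, and iterate conditionally. Two small slips: the factor $\e^{-as}$ in your substitution for $y$ is a typo (you clearly intend $y(s)=(\sqrt a/b)\,x(s/a)$, since you later introduce $Z_s=\e^{-s}y(s)$ separately), and with your stated bound $\langle M\rangle_\infty-\langle M\rangle_{s_{\min}}\le\delta_0^2/128$ Doob's $L^2$ inequality only yields probability $\ge 1/2$ rather than $\ge 7/8$ --- trivially repaired by enlarging $s_{\min}$. The one genuine technical variation is in the anti-concentration step: you keep the full martingale $M$ and use Dambis--Dubins--Schwarz together with the fact that $\langle M\rangle_{s_{\min}}$ lies in a small \emph{deterministic} window $[v_1,v_2]$, while the paper instead splits the stochastic integral as $\int_0^t \e^{t-s}\,\dd W$ (explicitly Gaussian) plus the perturbation $\int_0^t \e^{t-s}\bigl((C-1)\,\dd W+E\,\dd s\bigr)$, controlling the latter by Bernstein's exponential martingale inequality; both routes are equally effective here.
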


\begin{proof}
By considering $\hat x(t) = {\sqrt a \over b} x(t/a)$ we can reduce ourselves to the case $a = b = 1$, which we assume from now on. We write
\begin{equ}
x(t) = x_0 \e^t + Z(t) + \hat E(t)\;, 
\end{equ}
where
\begin{equ}
Z(t) = \int_0^t \e^{t-s} \dd W(s)\;,\qquad
\hat E(t) = \int_0^t \e^{t-s} \bigl(E(s)\,\dd s + (C(s)-1)\,\dd W(s)\bigr)\;.
\end{equ}
It follows that for any deterministic time $t > (\log 2)/2$ we can write
\begin{equ}[e:formSol]
x(t) = \e^t(x_0 + \eta + \eps \zeta)\;,
\end{equ}
where $\eta$ is a Gaussian random variable with variance in $[{1\over 4}, {1\over 2}]$
and, by Bernstein's inequality \cite[Ex.~IV.3.16]{RevuzYor}, 
$\zeta$ is a random variable (correlated with $\eta$ in general) with the property
that $\P(|\zeta| > 1+M) \le 2\exp(-M^2)$.

Set now $t = \log (K\vee 1)- \log \eps$ so that $\e^t \ge K/\eps$. Provided that $\eps < 1/\sqrt 2$, 
this yields the bound
\begin{equs}
\P \Big(\tau_K > \log {K\vee 1\over \eps} \Big) &\le \P (|x(t)| < K) \le \P (|x_0 + \eta + \eps \zeta| < \eps) \\
&\le \P (|x_0 + \eta| < M \eps) + \P (|\zeta| > M-1) \\
&\le {4M \eps \over \sqrt{2\pi}} + \P (|\zeta| > M-1)\;.
\end{equs}
It immediately follows that, by first choosing $M$ large enough so that 
$\P (|\zeta| > M-1) < 1/4$ and then choosing $\eps$ small enough so that 
${4M \eps \over \sqrt{2\pi}} < 1/4$, we can guarantee that $\P \big(\tau_K > \log \big((1\vee K) / \eps\big)\big) < 1/2$.

Using the same argument, combined with the fact that $E$ and $C$ are adapted, we obtain
the almost sure bound
\begin{equ}
\P \big(\tau_K > t + \log \big((1\vee K) / \eps\big)\,\big|\,\CF_t\big) < 1/2\;.
\end{equ}
Iterating this bound then yields the claim.
\end{proof}

\begin{proposition}\label{prop:boundStable}
Let $x$ and $y$ be solutions to 
\begin{equ}
\dot x = 1-a(t)\,x\;,\qquad 
\dot y = 1 +a(t)\,y\;,
\end{equ}
for a function $a$ such that $a(t) \ge a_0 > 0$ for all $t > 0$ and such that 
$|a(t) - a(s)| \le K$ whenever $|t-s| \le 1/a_0$. There exists a universal constant $C$ such that,
setting $x^\star(t) = 1/a(t)$, one has
\begin{equ}[e:goodBound]
|x(t) - x^\star(t)| \le |x(0) - x^\star(0)|\e^{-a_0 t} +  {CK \over a_0^{2}}\;,
\end{equ}
for all $t \ge 0$. Setting $y^\star(t) = -1/a(t)$, there exists a constant $C$ such that,
provided that $|y(0) - y^\star(0)| \ge C K/a_0^2$
and that $a(t) \in [a_0,2a_0]$ for all $t > 0$, one has
\begin{equ}[e:lowerBound]
|y(t) - y^\star(t)| \ge |y(0) - y^\star(0)|\e^{a_0 t/2}/2\;.
\end{equ}
\end{proposition}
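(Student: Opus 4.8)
The plan is to prove both bounds by explicit variation--of--constants formulas, exploiting that $a$ is only assumed to satisfy a modulus--of--continuity bound on the scale $1/a_0$, so that no differentiability of $a$ (hence of $x^\star$ or $y^\star$) is available. Throughout I write $A(t)=\int_0^t a(u)\,du$ and record the elementary consequence of the hypothesis that, by telescoping over $\lceil a_0|t-s|\rceil$ intervals of length at most $1/a_0$, one has $|a(t)-a(s)|\le K(1+a_0|t-s|)$ for all $s,t\ge 0$; in particular $|x^\star(t)-x^\star(s)|=|1/a(t)-1/a(s)|\le K(1+a_0|t-s|)/a_0^2$, and the same for $y^\star$.

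For \eqref{e:goodBound}, the solution of $\dot x=1-a(t)x$ is $x(t)=x(0)\e^{-A(t)}+\int_0^t\e^{-(A(t)-A(s))}\,\dd s$. The key observation is the identity $\int_0^t a(s)\e^{-(A(t)-A(s))}\,\dd s=1-\e^{-A(t)}$ (obtained by integrating $\tfrac{\dd}{\dd s}\e^{-(A(t)-A(s))}$), which lets one extract the would-be fixed point: writing $1=\frac{a(s)}{a(t)}+\frac{a(t)-a(s)}{a(t)}$ inside the integral gives
\[
x(t)-x^\star(t)=\bigl(x(0)-x^\star(t)\bigr)\e^{-A(t)}+\frac{1}{a(t)}\int_0^t\bigl(a(t)-a(s)\bigr)\e^{-(A(t)-A(s))}\,\dd s\;.
\]
Using $A(t)-A(s)\ge a_0(t-s)$ and the bound on $|a(t)-a(s)|$, the integral term is at most $\frac{K}{a_0}\int_0^\infty(1+a_0 r)\e^{-a_0 r}\,\dd r=\frac{2K}{a_0^2}$, while the first term is bounded by $|x(0)-x^\star(0)|\e^{-a_0 t}+\frac{K}{a_0^2}(1+a_0 t)\e^{-a_0 t}\le|x(0)-x^\star(0)|\e^{-a_0 t}+\frac{K}{a_0^2}$. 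This yields \eqref{e:goodBound} with, say, $C=3$, with no use of the upper bound on $a$.

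For \eqref{e:lowerBound}, I would avoid writing an ODE for $v:=y-y^\star$ (which would involve $\dot y^\star$) and instead use the pointwise algebraic identity $1+a(t)y^\star(t)=0$, which combined with $\dot y=1+a(t)y$ gives $\dot y(t)=a(t)\bigl(y(t)-y^\star(t)\bigr)=a(t)v(t)$. Integrating $\dot y$, inserting the modulus--of--continuity bound for $y^\star$, and using $a\in[a_0,2a_0]$ yields, as long as $v$ has not changed sign on $[0,t]$, the inequality $|v(t)|\ge|v(0)|-K(1+a_0 t)/a_0^2+a_0\int_0^t|v(s)|\,\dd s$. I would then run a continuity/bootstrap argument: set $T=\sup\{t\ge 0:|v(s)|\ge\tfrac12|v(0)|\e^{a_0 s/2}\text{ for all }s\le t\}$ and suppose $T<\infty$. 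On $[0,T]$ the bootstrap forces $|v|>0$, so $v$ keeps its sign and the displayed inequality applies; plugging $|v(s)|\ge\tfrac12|v(0)|\e^{a_0 s/2}$ into the integral gives $|v(T)|\ge|v(0)|\e^{a_0 T/2}-K(1+a_0 T)/a_0^2$. The hypothesis $|v(0)|\ge CK/a_0^2$ with $C\ge 4$, together with the elementary inequality $\e^{s/2}\ge 1+s/2>(1+s)/2$ for $s\ge0$, then shows $|v(T)|>\tfrac12|v(0)|\e^{a_0 T/2}$, contradicting maximality of $T$; hence $T=\infty$, so any $C\ge 4$ works. The case $v(0)<0$ is identical after replacing $v$ by $-v$, since the integral inequality only involves $|v|$.

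The computation for \eqref{e:goodBound} is essentially routine once the Duhamel identity is in hand. The only genuinely delicate point is \eqref{e:lowerBound}: one must ensure that the gap $|v|$ never vanishes (so that $\dot y=a\,v$ never ``turns around''), and that the linearly growing error $K(1+a_0 t)/a_0^2$ coming from the drift of the unstable fixed point $y^\star$ is ultimately dominated by the exponential growth. Both are handled simultaneously by running the bootstrap with exponent $a_0/2$ rather than $a_0$ and by taking the threshold constant $C$ large enough; I expect the bookkeeping of that constant to be the main, if mild, obstacle.
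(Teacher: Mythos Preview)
Your argument is correct, and it takes a genuinely different route from the paper's proof. The paper freezes the fixed point at the start of each interval of length $1/a_0$: for $x$ it sets $\tilde x(t)=x(t)-x^\star(0)$, obtains $|\tilde x(t)|\le \e^{-a_0 t}|\tilde x(0)|+K/a_0^2$ for $t\le 1/a_0$, then iterates; the $y$ bound is handled the same way, and here the assumption $a\le 2a_0$ is used to control the growing exponential $\e^{A(t)-A(s)}$ in the Duhamel remainder over one step. By contrast, you work globally: for $x$ you pull out the \emph{moving} fixed point $x^\star(t)$ via the identity $\int_0^t a(s)\e^{-(A(t)-A(s))}\,\dd s=1-\e^{-A(t)}$, and for $y$ you replace the Duhamel formula by the integral inequality $|v(t)|\ge |v(0)|-K(1+a_0 t)/a_0^2+a_0\int_0^t|v(s)|\,\dd s$ and close it with a bootstrap at rate $a_0/2$. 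Your approach is a bit slicker and, as a bonus, your proof of \eqref{e:lowerBound} never actually uses the upper bound $a\le 2a_0$ (despite your mentioning it), whereas the paper's iteration does. The paper's step--and--iterate argument has the advantage of being completely elementary and transparent about constants; yours trades the iteration for a continuity argument.
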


\begin{proof}
Writing $\tilde x(t) = x(t) - x^\star(0)$, we have
\begin{equ}
\dot{\tilde x} = - a(t) \tilde x + (a(0) - a(t))\,x^\star(0)\;,
\end{equ}
so that, for $t \le 1/a_0$,
\begin{equ}
|\tilde x(t)| \le \e^{-a_0 t}|\tilde x(0)| + K |x^\star(0)| / a_0\;.
\end{equ}
On the other hand, one has
\begin{equ}
|x^\star(t) - x^\star(0)| = {|a(t) - a(0)|\over a(0)a(t)} \le {K  \over a_0^2}\;.
\end{equ}
Since $|x^\star(0)|\le 1/a_0$, we conclude that \eqref{e:goodBound} holds for 
$t \le a_0$ with $C = 2$. It then suffices to iterate this bound. 

Regarding the lower bound on $y$, a similar calculation shows that, for $t \le 1/a_0$,
\begin{equ}
|\tilde y(t)| \ge \e^{a_0 t} |\tilde y(0)| - K |y^\star(0)|\int_0^t \e^{2a_0 s}\,\dd s
\ge \e^{a_0 t} |\tilde y(0)| - (\e^2-1){K \over a_0^2} \;.
\end{equ}
Choosing $C$ sufficiently large and iterating this bound proves the claim.
\end{proof}

\begin{corollary}\label{cor:goodBound}
Let $x$ and $y$ be the solutions to 
\begin{equ}
\dot x = f(t)\big(1-a(t)\,x\big)\;,\qquad
\dot y = f(t)\big(1+a(t)\,y\big)\;,
\end{equ}
with $a$ as in Proposition~\ref{prop:boundStable} and $f \ge f_0$. 
Then, the conclusions still holds, but with $\e^{-a_0 t}$ and $\e^{a_0 t/2}$ 
replaced by $\e^{-a_0 f_0 t}$ and $\e^{a_0 f_0 t/2}$ respectively.
\end{corollary}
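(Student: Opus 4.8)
The plan is to deduce Corollary~\ref{cor:goodBound} directly from Proposition~\ref{prop:boundStable} by performing a deterministic time change that absorbs the factor $f$. Concretely, I would set $s(t) = \int_0^t f(r)\,\dd r$, which, since $f \ge f_0 > 0$, is a smooth strictly increasing bijection from $[0,\infty)$ onto $[0,S)$ (possibly $S=\infty$) satisfying $s(t) \ge f_0 t$; write $t(\bigcdot)$ for its inverse. Putting $\tilde x(s) = x(t(s))$, $\tilde y(s) = y(t(s))$ and $\tilde a(s) = a(t(s))$, the chain rule together with $t'(s) = 1/f(t(s))$ gives $\frac{\dd \tilde x}{\dd s} = 1 - \tilde a(s)\,\tilde x(s)$ and $\frac{\dd \tilde y}{\dd s} = 1 + \tilde a(s)\,\tilde y(s)$, which is exactly the system of Proposition~\ref{prop:boundStable} with $a$ replaced by $\tilde a$.

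The only point requiring a small verification is that $\tilde a$ inherits the hypotheses imposed on $a$. The lower bound $\tilde a(s) \ge a_0$ (respectively the two-sided bound $\tilde a(s) \in [a_0,2a_0]$ needed for the lower estimate) is immediate since $\tilde a$ is just $a$ evaluated along $t(\bigcdot)$. For the modulus-of-continuity condition, note that $|t(s) - t(s')| = \bigl|\int_{s'}^s \dd r/f(t(r))\bigr| \le |s-s'|/f_0$, so that $|s-s'| \le 1/a_0$ forces $|t(s)-t(s')| \le 1/(a_0 f_0)$; when $f_0 \ge 1$ — as is the case in every application of the corollary within this paper, where $f = 1\pm a\theta$ with the product term having the favourable sign — this is $\le 1/a_0$ and hence $|\tilde a(s)-\tilde a(s')| \le K$, while for $f_0 < 1$ one covers the $s$-interval by $\lceil f_0^{-1}\rceil$ pieces on each of which $t$ moves by at most $1/a_0$, at the cost of replacing $K$ by $\lceil f_0^{-1}\rceil K$, which does not affect the form of the statement.

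It then remains to translate the conclusions of Proposition~\ref{prop:boundStable} back through the substitution. Writing $x^\star(t) = 1/a(t) = 1/\tilde a(s(t))$ and applying \eqref{e:goodBound} to $\tilde x$ yields $|\tilde x(s) - 1/\tilde a(s)| \le |\tilde x(0) - x^\star(0)|\,\e^{-a_0 s} + CK/a_0^2$; evaluating at $s = s(t)$, using $x(t) = \tilde x(s(t))$ and $s(t) \ge f_0 t$ to bound $\e^{-a_0 s(t)} \le \e^{-a_0 f_0 t}$, gives precisely the asserted bound with $\e^{-a_0 t}$ replaced by $\e^{-a_0 f_0 t}$. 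The lower estimate \eqref{e:lowerBound} is handled identically, this time using $\e^{a_0 s(t)/2} \ge \e^{a_0 f_0 t/2}$. There is essentially no hard step here: the only ``obstacle'' is the bookkeeping in the previous paragraph, ensuring that the regularity of $a$ survives the reparametrisation; once that is in place the corollary is a one-line consequence of Proposition~\ref{prop:boundStable}.
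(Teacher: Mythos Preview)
Your proposal is correct and takes exactly the same approach as the paper, whose entire proof reads ``It suffices to perform a time change to eliminate $f$.'' Your write-up is in fact more careful than the paper's, since you flag the small bookkeeping issue with the modulus-of-continuity hypothesis on $\tilde a$ (which the paper passes over in silence) and correctly observe that in the applications one has $f_0 = 1$, so the issue is moot there.
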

\begin{proof}
It suffices to perform a time change to eliminate $f$.
\end{proof}

\section{Path decomposition}

In this appendix, we justify the relation between the Markov process determined by the diffusion \eqref{e:equationTheta} and the Markov chain $E_n$ on path segments defined in \eqref{e:defEn}.
The natural topology on $\hat \CX$ (which turns it into a Polish space) is the one 
given by the metric 
\begin{equ}
d(\tau,u; \bar \tau, \bar u) = |\tau - \bar \tau| + \sup_{t \ge 0} |u(t\wedge \tau) - \bar u(t\wedge \bar \tau)|\;.
\end{equ}
In view of \eqref{e:defZz}, we can define for $n \ge -1$ the closed subset 
$\hat \CX_n \subset \hat \CX$ given for $n \ge 0$ by those trajectories $(\tau,u)$
such that $|z(0)| = 2^n$ and $z(\tau) \in \{2z(0), z(0)/2\}$. Similarly, 
$\hat \CX_{-1}$ is defined by imposing $|z(0)| = 1/2$ and $|z(\tau)| = 1$.

Writing $\hat \CP$ for the transition probabilities of $E_n$, we then have 
$\hat \CP(E, \hat \CX_{n-1} \cup \hat \CX_{n+1}) = 1$ for all
$E \in \hat \CX_n$ for $n \ge 0$ and $\hat \CP(E, \hat \CX_{0}) = 1$ when
$E \in \hat \CX_{-1}$. As a consequence, we can restrict our state space
and assume henceforth that $\hat \CX = \bigcup_{n \ge -1} \hat \CX_n$. We
also introduce the observable $N \colon \hat \CX \to \R$ such that
$N(E) = n$ for all $E \in \hat \CX_n$. One then has the following straightforward 
result.

\begin{lemma}\label{lem:propertiesP}
The Markov transition kernel $\hat \CP$ has the following properties.
\begin{enumerate}
\item It has the Feller property on $\hat \CX$.
\item For every $\alpha > 0$, the function $V(E) = \exp(\alpha N(E))$ 
is a Lyapunov function
for $\hat \CP$ in the sense that there exists $c < 1$ and $C > 0$ such that  
\begin{equ}
\int V(E') \hat \CP(E, dE') \le c V(E) + C\;.
\end{equ}
\item For every $\eps \ge 0$ and every $n \ge -1$ there exists a compact set 
$K \subset \hat \CX$ such that $\hat \CP(E,K) \ge 1-\eps$, uniformly over all
$E \in \hat \CX_n$. 
\end{enumerate}
In particular, it admits an invariant measure $\hat \mu_\alpha$ 
such that $V \in L^1(\hat \mu_\alpha)$.
\end{lemma}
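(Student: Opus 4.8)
The plan is to verify the three listed properties in turn and then close with a Krylov--Bogoliubov argument, the one subtlety being that $V$ does \emph{not} have compact sublevel sets, so properties (2) and (3) must be combined rather than (3) being invoked directly. Throughout I would use the fact, already recorded in the excerpt, that $\hat\CX=\bigsqcup_{n\ge-1}\hat\CX_n$ with the $\hat\CX_n$ pairwise separated (the admissible values $\pm2^n$ of $z(0)$ are $\tfrac14$-separated), so that each $\hat\CX_n$ is clopen, $N$ is continuous, and $\hat\CP$ maps $\hat\CX_n$ into $\hat\CX_{n-1}\cup\hat\CX_{n+1}$ (resp.\ into $\hat\CX_0$ when $n=-1$).

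For the Feller property it then suffices to prove continuity of $E\mapsto\hat\CP(E,\cdot)$ separately on each $\hat\CX_n$. Since $\hat\CP(E,\cdot)$ depends on $E$ only through the endpoint $u(\tau)=(\theta_1,z_1)$ of the excursion, and since a convergent sequence in $\hat\CX_n$ has endpoints whose $z_1$-component is eventually constant (it ranges over a discrete set), the point reduces to continuity in $\theta_0$, for fixed $z_0$, of the law of the excursion started at $(\theta_0,z_0)$. Here the key structural fact is that $z$ is an Ornstein--Uhlenbeck process, \emph{independent of} $\theta$: hence the exit time $\tau$ of $z$ from the bounded set $\CZ(z_0)$ is a.s.\ finite and does not depend on $\theta_0$ at all, while for each fixed realisation of the noise the solution of $\dot\theta=1-z(t)\sin^2\theta$ depends continuously on $\theta_0$, uniformly on $[0,\tau]$, by Grönwall (the drift is Lipschitz in $\theta$ with constant $\le 2|z_0|$ there). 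Dominated convergence then gives $\theta_0\mapsto\int\Phi\,\dd\hat\CP\bigl((\theta_0,z_0),\cdot\bigr)$ continuous for every bounded continuous $\Phi$.

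For the drift bound, write $p_+(E)$ for the probability that the next excursion lands in $\hat\CX_{N(E)+1}$. Since $N$ jumps by $\pm1$, one has, for $N(E)\ge0$,
\begin{equ}
\int V(E')\,\hat\CP(E,\dd E')=V(E)\bigl(\e^{-\alpha}+(\e^{\alpha}-\e^{-\alpha})\,p_+(E)\bigr)\;,
\end{equ}
so I only need $p_+(E)\to0$ uniformly over $E\in\hat\CX_n$ as $n\to\infty$. By symmetry take $z_0=2^n\to+\infty$, so $p_+(E)=\P_{z_0}\bigl(z\text{ exits }[z_0/2,2z_0]\text{ at }2z_0\bigr)$. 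I would estimate this through the martingale $Y_t=\e^{\gamma t}(z(t)-z_\star)$: reaching $2z_0$ before a time $T$ forces $\sup_{t\le T}\bigl|\alpha\int_0^t\e^{\gamma s}\,\dd\xi_s\bigr|\gtrsim z_0$, which by Bernstein's inequality for continuous martingales has probability $\lesssim\exp\bigl(-c\,z_0^2/(\alpha^2\e^{2\gamma T})\bigr)$; taking $T=\tfrac1{2\gamma}\log z_0$ and using the moment bound $\E\tau^k\lesssim1$ from Lemma~\ref{lem:stopTime} to control $\P(\tau>T)\lesssim(\log z_0)^{-k}$, we get $p_+(E)\to0$. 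Choosing $n_0$ so that $\e^{-\alpha}+(\e^{\alpha}-\e^{-\alpha})p_+(E)\le c:=\tfrac12(1+\e^{-\alpha})<1$ for $N(E)\ge n_0$, and bounding the finitely many lower levels trivially, yields the Lyapunov inequality with $C=\e^{\alpha(n_0+1)}$.

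For property (3), given $\eps>0$ and $n$, I would take $K$ of Arzelà--Ascoli type: the set $\{(\tau,u):\tau\le T,\ |z(t\wedge\tau)-z(s\wedge\tau)|\le\rho(|t-s|),\ |\theta(t\wedge\tau)-\theta(s\wedge\tau)|\le(1+2^{n+2})|t-s|\}$ is compact in $\hat\CX$ (the $z$-values lie in a bounded set, $\R\P^1$ is compact, the conditions are closed, and this is the continuous image of a compact set). The excursion out of $\hat\CX_n$ starts from one of finitely many points with $|z_0|\in\{2^{n-1},2^{n+1}\}$; choosing $T$ large makes $\P(\tau>T)<\eps/2$ (finite mean of $\tau$), choosing a fixed continuous modulus $\rho$ with $\rho(0)=0$ kills the oscillation of the OU path on $[0,T]$ via Kolmogorov's tightness criterion, and the $\theta$-oscillation bound holds deterministically since $|\dot\theta|\le 1+2|z_0|$ on $[0,\tau]$; hence $\hat\CP(E,K)\ge1-\eps$ uniformly over $\hat\CX_n$. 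To conclude, the drift bound gives $\sup_k\E_{E_0}V(E_k)\le\tfrac{V(E_0)+C}{1-c}=:M$, hence $\P_{E_0}(N(E_j)>N_0)\le M\e^{-\alpha N_0}$; fixing $\eps>0$, taking $N_0$ with $M\e^{-\alpha N_0}<\eps/4$ and $K=\bigcup_{n=-1}^{N_0}K_n$ with $K_n$ the compact from property (3) for parameter $\eps/4$, one gets $\P_{E_0}(E_j\notin K)\le\eps/2$ for all $j\ge1$, so the Cesàro averages $\nu_k=\tfrac1k\sum_{j<k}\hat\CP^j(E_0,\cdot)$ are tight. Any weak limit point $\hat\mu_\alpha$ is $\hat\CP$-invariant by the Feller property, and $\int V\,\dd\hat\mu_\alpha\le\liminf_k\int V\,\dd\nu_k\le M<\infty$ since $V$ is continuous. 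The only genuinely quantitative ingredient is the estimate $p_+(E)\to0$ in the drift bound; the Feller property looks the most technical but is in fact made routine by the decoupling of $z$ from $\theta$.
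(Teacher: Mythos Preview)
Your argument is correct and follows the same route as the paper's (verify the three items and then apply Krylov--Bogoliubov, obtaining tightness by combining (2) and (3)), with your Lyapunov and Arzelà--Ascoli steps being more explicit versions of what the paper states tersely. The one noteworthy difference is in the Feller property: you exploit that the starting $z$-value is discrete within each $\hat\CX_n$ to reduce to continuity in $\theta_0$ alone (so the exit time $\tau$ couples exactly across nearby initial conditions via the common $z$-path), which is slightly cleaner than the paper's argument, where the authors instead invoke that the law of $u$ does not charge the discontinuity set of the hitting-time functional.
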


\begin{proof}
The first property follows immediately from the fact that solutions to 
\eqref{e:equationTheta} depend continuously on their initial condition, combined with 
\cite[Cor.~8.4.2]{BogachevMT} and the fact
that their law does not charge the points of discontinuity of the map $u \mapsto \inf\{t>0\,:\, |z(t)| = a\}$ for any $a \neq z(0)$.

For the second property, a simple comparison with Brownian motion with drift shows that, 
for any fixed $\eps > 0$ and any $n$ large enough, one has 
$\hat \CP(E, \{N=n-1\}) \ge 1-\eps$ for all $E \in \hat \CX_n$, whence the claim 
follows.

The last property follows from the upper bound of Lemma~\ref{lem:stopTime} 
and Arzel\`a--Ascoli, 
using the fact that the $1/3$-H\"older norm (say) of a Wiener process admits
exponential moments over any fixed time interval. 

Combining the second and third part, we conclude that 
the sequence $\{\hat \CP^k(E,\cdot\,)\}_{k \ge 1}$ is tight for every $E \in \hat \CX$,
so that the existence of an invariant measure $\hat \mu_\alpha$ 
for $\hat \CP$ follows from the Krylov--Bogolyubov theorem. The integrability of $V$ is
an immediate consequence of part~2.
\end{proof}

Note that the invariant measure exhibited in this lemma is actually unique, 
but we will not use this and therefore do not give a proof. We then have 
the following result.

\begin{lemma}\label{lem:idenAverages}
Let $F \colon \CX \to \R$ be such that $|F(\theta,z)| \lesssim (1+|z|)^p$ for some
$p > 0$ and let $\hat F\colon \hat \CX \to \R$ be given by \eqref{e:defFhat}.
Then, one has
$\hat F \in L^1(\hat \mu_\alpha)$ and
\begin{equ}
\int F(u)\,\mu_\alpha(\dd u) = \bar T_\alpha^{-1} \int_\CX \hat F(E)\,\hat \mu_\alpha(\dd E) \;,
\end{equ}
where $\bar T_\alpha = \int \tau(E)\,\hat \mu_\alpha(\dd E)$.
\end{lemma}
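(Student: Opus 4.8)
The plan is to recognise Lemma~\ref{lem:idenAverages} as an instance of the standard ``cycle formula'' relating the invariant measure of a Markov process to that of an induced Markov chain on excursions, and to verify the integrability hypotheses needed to make that abstract argument go through in our non-compact setting. First I would set up notation: let $(\theta(t),z(t))$ be the stationary solution of \eqref{e:equationTheta} with law $\mu_\alpha$ at time zero, and recall the stopping times $\tau_n$ from \eqref{e:deftaun} and the excursions $E_n$ from \eqref{e:defEn}. The key structural fact is that the map $u(\bigcdot) \mapsto (E_0, E_1, E_2, \ldots)$ intertwines the time-shifted dynamics with the shift on the sequence of excursions, so that $(E_n)_{n \ge 0}$ is a stationary Markov chain \emph{provided} its initial distribution is stationary. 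The subtlety is that if $u(\bigcdot)$ starts from $\mu_\alpha$, then $E_0$ does \emph{not} have law $\hat\mu_\alpha$ in general (the chain started at time $0$ is out of equilibrium because $t=0$ falls in the ``middle'' of an excursion). The cycle formula handles exactly this discrepancy.

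The core computation I would carry out is the following. For a bounded continuous $F$, write, for the stationary process and any $T>0$,
\begin{equ}
T\,\mu_\alpha(F) = \E \int_0^T F(u(t))\,\dd t\;,
\end{equ}
and decompose the integral over the excursion intervals $[\tau_n \wedge T, \tau_{n+1}\wedge T]$. Taking $T \to \infty$ and using the renewal structure, the Birkhoff ergodic theorem applied to the stationary ergodic sequence $(E_n)$ gives
\begin{equ}
\lim_{N\to\infty} \frac1N \sum_{n=0}^{N-1} \hat F(E_n) = \int_{\hat\CX}\hat F(E)\,\hat\mu_\alpha(\dd E)\;,\qquad
\lim_{N\to\infty}\frac1N\sum_{n=0}^{N-1}\tau(E_n) = \bar T_\alpha\;,
\end{equ}
almost surely, while on the other hand $\frac1{\tau_N}\int_0^{\tau_N} F(u(t))\,\dd t \to \mu_\alpha(F)$ by ergodicity of $u$ (using $\mu_\alpha$-uniqueness from Theorem~\ref{theo:basic}, or rather its analogue for \eqref{e:equationTheta}, established via Propositions~\ref{prop:Hormander} and~\ref{prop:control}). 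Dividing $\sum_{n<N}\hat F(E_n) = \int_0^{\tau_N} F(u(t))\,\dd t$ by $\tau_N = \sum_{n<N}\tau(E_n)$ and matching the two limits yields
\begin{equ}
\mu_\alpha(F) = \frac{\int_{\hat\CX}\hat F(E)\,\hat\mu_\alpha(\dd E)}{\bar T_\alpha}\;,
\end{equ}
which is the claim for bounded $F$. Alternatively, and perhaps more cleanly, I would use Lemma~\ref{lem:propertiesP}(2) to deduce that $\hat\mu_\alpha$ has exponential tails in $N$, so the ``size-biasing by $\tau$'' operation is well defined, and identify $\mu_\alpha$ directly as the normalised pushforward of $\tau(E)\,\hat\mu_\alpha(\dd E)$ under the map $(E,s)\mapsto u(s)$, $s$ uniform on $[0,\tau(E)]$; checking that this pushforward is $\CL_0$-invariant and then invoking uniqueness closes the argument.

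The two points requiring genuine care, and where I expect the main obstacle, are: (i) upgrading from bounded $F$ to $F$ with polynomial growth in $z$, i.e. showing $\hat F \in L^1(\hat\mu_\alpha)$; and (ii) justifying that $\bar T_\alpha < \infty$ and that all the almost-sure limits above hold. For (ii), finiteness of $\bar T_\alpha = \int \tau(E)\,\hat\mu_\alpha(\dd E)$ follows by combining the exponential-in-$N$ Lyapunov bound of Lemma~\ref{lem:propertiesP}(2) with the conditional bound $\E(\tau(E_{n+1})^k \mid \CF_{\tau_n}) \le C$ of Lemma~\ref{lem:stopTime} (note $|z(\tau_n)| \asymp 2^{N(E_n)}$, so $\tau(E_n)$ has moments controlled by a constant times a power of $2^{-N(E_n)}$ when $|z|\ge 1$ and by an absolute constant otherwise), giving $\int \tau(E)^k\,\hat\mu_\alpha(\dd E) < \infty$ for every $k$. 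For (i), one estimates
\begin{equ}
\int_{\hat\CX} |\hat F(E)|\,\hat\mu_\alpha(\dd E)
\le \int_{\hat\CX} \E\Big(\sup_{0\le s\le\tau}(1+|z(s)|)^p \cdot \tau\,\Big|\,z(0)\Big)\,\hat\mu_\alpha(\dd E)\;,
\end{equ}
and bounds the inner conditional expectation using Fernique's theorem for the Ornstein--Uhlenbeck process together with Lemma~\ref{lem:stopTime}; since on $\hat\CX_n$ one has $|z(0)| = 2^n$ and $\sup_{s\le\tau}|z(s)|$ stays within a bounded factor of $2^n$ with all moments (the OU process cannot wander far before $\tau$), this is dominated by $C(1+2^n)^{p}\cdot 2^{-2n}\vee 1$, which is $\hat\mu_\alpha$-integrable by the exponential tail in $N$. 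Finally, a standard argument (truncating $F$ at level $M$ and letting $M\to\infty$, using dominated convergence on both sides) transfers the identity from bounded to polynomially-growing $F$. I would present (ii) first, since it is needed to make sense of the statement at all, then the bounded-$F$ identity, then the truncation step for (i).
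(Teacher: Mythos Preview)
Your overall strategy---reduce to the ratio $\sum_{n<N}\hat F(E_n)\big/\sum_{n<N}\tau(E_n)$ and identify each side---is the right one and is exactly what the paper does. The integrability claims (i) and (ii) are also handled correctly: combining the exponential-in-$N$ Lyapunov bound of Lemma~\ref{lem:propertiesP}(2) with the moment bound on $\tau$ from Lemma~\ref{lem:stopTime} is precisely how the paper obtains $\hat F\in L^1(\hat\mu_\alpha)$ and $\bar T_\alpha<\infty$. (Incidentally, $\sup_{s\le\tau}|z(s)|\le 2|z_0|$ holds by the very definition of $\tau$ via \eqref{e:defZz}, so no Fernique is needed there.)

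There is, however, a genuine gap in the core computation. You start $u$ from $\mu_\alpha$ and then invoke ``the Birkhoff ergodic theorem applied to the stationary ergodic sequence $(E_n)$''---but you yourself noted in the preceding paragraph that under this initialisation $E_0$ does \emph{not} have law $\hat\mu_\alpha$, so $(E_n)$ is not stationary. To conclude $\tfrac{1}{N}\sum_{n<N}\hat F(E_n)\to\int\hat F\,\dd\hat\mu_\alpha$ from a non-stationary start one would need a Harris property for the chain $\hat\CP$, or at least uniqueness of $\hat\mu_\alpha$; the paper explicitly declines to prove either.

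The paper's fix is to swap which process is started in equilibrium. It initialises the \emph{chain} $(E_k)$ with law $\hat\mu_\alpha$, so that Birkhoff applies to it directly; the coupled continuous-time process then starts from $\pi_0^*\hat\mu_\alpha$, which is singular with respect to $\mu_\alpha$. The limit $\tfrac{1}{\tau_n}\int_0^{\tau_n}F(u(s))\,\dd s\to\mu_\alpha(F)$ for this ``wrong'' initial law is justified via the Harris chain property of the time-$1$ map of \eqref{e:equationTheta}, which \emph{is} available from Propositions~\ref{prop:Hormander} and~\ref{prop:control}. So the trick is simply to place the stationarity assumption on the side for which the complementary Harris input has already been established. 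Your alternative route---identifying $\mu_\alpha$ directly as the normalised pushforward of $\tau(E)\,\hat\mu_\alpha(\dd E)$ and invoking uniqueness of $\mu_\alpha$---would also work and sidesteps the issue entirely; the paper just chose the ergodic-average formulation.
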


\begin{proof}
The fact that $\hat F \in L^1(\hat \mu_\alpha)$ follows by combining
point 2.\ of Lemma~\ref{lem:propertiesP} with Lemma~\ref{lem:stopTime}.
The latter also shows that $\bar T_\alpha < \infty$.

Note now that it follows from Birkhoff's ergodic theorem that 
\begin{equ}[e:idenFhat]
\int_\CX \hat F(E)\,\hat \mu_\alpha(\dd E) = \lim_{n \to \infty} {1\over n}\sum_{k=1}^n \hat F(E_k)\;,
\end{equ}
where $E_k$ is the stationary Markov chain with transition probability $\hat \CP$ and
fixed time marginal $\hat \mu_\alpha$. However, the natural coupling \eqref{e:defEn} between
the chain $E_k$ and the solution to \eqref{e:equationTheta} yields
\begin{equ}
\sum_{k=1}^n \hat F(E_k) = \int_0^{\tau_n} F(u(s))\,\dd s\;,
\end{equ}
where $u(0)$ is distributed according to $\pi_0^* \hat \mu_\alpha$,
with $\pi_0(\tau,u) = u(0)$.
On the other hand, Propositions~\ref{prop:Hormander} and~\ref{prop:control}
imply that the time-$1$ map of \eqref{e:equationTheta} yields a Harris chain,
so that, arguing as in \cite[Prop.~17.1.6]{MeynTweedie}, the identity
\begin{equ}[e:Birkhoff]
\lim_{t \to \infty}{1\over t} \int_0^{t} F(u(s))\,\dd s = \int F(u)\,\mu_\alpha(\dd u)\;.
\end{equ}
holds almost surely. (The reason why we need to invoke the Harris chain property is that 
the initial condition $u(0)$ in \eqref{e:Birkhoff} is distributed according
to $\pi_0^* \hat \mu_\alpha$, which is singular with respect to $\mu_\alpha$.)

In particular, since $\lim_{n \to \infty} \tau_n = \infty$ almost surely
(take $F(\tau,u) = \tau$ in \eqref{e:idenFhat}), one has 
\begin{equs}
\int F(u)\,\mu_\alpha(\dd u) &= \lim_{n \to \infty}{1\over \tau_n} \int_0^{\tau_n} F(u(s))\,\dd s = \lim_{n \to \infty} {{1\over n} \int_0^{\tau_n} F(u(s))\,\dd s
\over {\tau_n \over n}} \\
&= \lim_{n \to \infty}{{1\over n} \sum_{k=1}^n \hat F(E_k) \over {1\over n} \sum_{k=1}^n  \tau(E_k)} =  \bar T_\alpha^{-1} \int_\CX \hat F(E)\,\hat \mu_\alpha(\dd E)\;,
\end{equs}
as claimed, where we used \eqref{e:idenFhat} in the last step.
\end{proof}

\endappendix

\bibliographystyle{./Martin}
\bibliography{./refs}

\end{document}